\newcommand{\dt}{\,dt}
\newcommand{\ds}{\,ds}
\newcommand{\J}{\mathrm{J}}
\newcommand{\F}{\mathrm{\bf F}}
\renewcommand{\d}{\mathrm{\bf d}}
\newcommand{\f}{\mathrm{\bf f}}
\newcommand{\g}{\mathrm{\bf g}}
\renewcommand{\O}{\mathcal{O}}
\newcommand{\U}{\mathcal{U}}
\newcommand{\V}{\mathcal{V}}
\newcommand{\R}{\mathrm{R}}
\newcommand{\N}{\mathrm{\bf N}}
\newcommand{\y}{{ \bf y}}
\newcommand{\p}{{ \bf p}}
\renewcommand{\v}{{\bf v}}
\newcommand{\w}{{\bf w}}
\newcommand{\du}{\h}
\newcommand{\h}{{\bf h}}
\renewcommand{\u}{{\bf u}}
\newtheorem{theorem}{Theorem}
\newtheorem{lemma}{Lemma}
\newtheorem{corrollary}{Corrollary}
\newtheorem{proposition}{Proposition}
\begin{document}

\title{Linearly convergent nonlinear conjugate gradient methods for a parameter identification problems}


\author[1]{Mohamed Kamel Riahi$^{\star,\dagger}$}
\address[1]{ $\dagger$
	 Department of Mathematics, Khalifa University of Sciences and Technology,\\
		PO Box 127788, Abu Dhabi, United Arab Emirates.\&\\
         Department of Mathematics, New York University in Abu Dhabi, Saadiyat Island, \\
             P.O. Box 129188, Abu Dhabi, United Arab Emirates.}
              \email{mohamed.riahi@kustar.ac.ae}
\author[2]{Issam Al Qattan$^\ddagger$}

\address[2]{
$\ddagger$ Issam. Al. Qattan 
              Department of Physics, Khalifa University of Sciences and Technology, \\
              PO Box 127788, Abu Dhabi, United Arab Emirates.}
 \email{issam.qattan@kustar.ac.ae}


\date{Received: date / Accepted: date}

\maketitle
\begin{abstract}
This paper presents a general description of a parameter estimation inverse problem for systems governed by nonlinear differential equations. The inverse problem is presented using optimal control tools with state constraints, where the minimization process is based on a first-order optimization technique such as adaptive monotony-backtracking steepest descent technique and nonlinear conjugate gradient methods satisfying strong Wolfe conditions. Global convergence theory of both methods is rigorously established where new linear convergence rates have been reported. Indeed, for the nonlinear non-convex optimization we show that under the Lipschitz-continuous condition of the gradient of the objective function we have a linear convergence rate toward a stationary point. Furthermore, nonlinear conjugate gradient method has also been shown to be linearly convergent toward stationary points where the second derivative of the objective function is bounded. 
The convergence analysis in this work has been established in a general nonlinear non-convex optimization under constraints framework where the considered time-dependent model could whether be a system of coupled ordinary differential equations or partial differential equations. Numerical evidence on a selection of popular nonlinear models is presented to support the theoretical results.
\keywords{Nonlinear Conjugate gradient methods, Nonlinear Optimal control \and Convergence analysis \and Dynamical systems \and Parameter estimation \and Inverse problem}
\end{abstract}

\section{Introduction}

Linear and nonlinear dynamical systems are popular approaches to model the behavior of complex systems such as neural network, biological systems and physical phenomena. These models often need to be tailored to experimental data through optimization of parameters involved in the mathematical formulations. Parameter estimations technique have been successfully used in a large spectrum of dynamical models, ranging from macro-scale modeling such as fluid-mechanics~\cite{gelman2014bayesian,owens2002computational,tarantola2005inverse}, aerospace and kinematics, to a micro-scale modeling such as neuron science~\cite{che2012parameter}, biological \cite{lillacci2010parameter}, semiconductors~\cite{Leitao2007,zou2004semiconductor} and chemical applications.  Many are the numerical methods that have been developed through the years in order to enhance the existing commonly used techniques and also to identifying complex dynamical systems.
 A large variety of methods are applied to solve practical problems in parameter optimization, starting from deterministic calculus~\cite{che2012parameter} methods and ranging to stochastic approaches~\cite{gelman2014bayesian}  passing by statistical approaches.  A classical technique based on a least square minimization is still widely applied in the parameter estimation and inverse problems~\cite{lee1999optimization,lillacci2010parameter,tarantola2005inverse,zou2004semiconductor} without being exhaustive. 
Among many techniques historically used we have: Newton, Levenberg-Marquardt, trust region methods. These methods and many other variants~\cite{schittkowski2013numerical} e.g. quasi-Newton's and truncated method, have shown super-linear and quadratic convergence when provided with an accurate first and second order information~\cite{guay1995optimization,vassiliadis1999second}. Despite their quality of local convergence, it is not guaranteed that any of these methods converges to a global minimizer, rather than converging to the stationary point closest to the initial guess. The weakness of the numerical optimization could not, unfortunately, be overcome unless good initial guess is provided. In order to avoid this local convergence, several techniques have been proposed in the literature such as sampling and multi-start methods~\cite{mckay2000comparison}, which consider multiple runs from a sampling of the domain of the parameters. These methods could perform a good sampling of the initial guess. However, they have been shown several restrictions and limitations~\cite{moles2003parameter,rodriguez2006novel}, where faced to large uncertainty on the range of variation of the parameter the multi-start methods become inefficient and time and memory consuming. Despite this undesirable behavior, gradient descent method is still exploited in a plenty of applications and has the advantages of being adaptive, where it can be coupled with many other global techniques in order to overcome the restriction of local convergence. In addition, gradient methods could benefits from some recently developed acceleration techniques such as \cite{maday2013parareal} or \cite{riahi2016new} to overcome the slow convergence rate du to the nature of the problem.

	This work focuses on the least square method in nonlinear optimization framework and presents its convergence property with the lowest pre-assumptions possible i.e. non-convex optimization, non-linear objective function but continuous-Lipschitz. 
The situation of lack of smoothness occurs often in real life problems, where the objective function is whether differentiable or it is smooth with a very expensive second derivative. In many applications, it is just impossible to deal with the second derivative. Because of these reasons, gradient-based technique gains its reputation among others. We will present a sufficient condition to the convergence of a descent gradient method for the minimization of the least square non-convex objective function. We also will present linear convergence rate results for a class of nonlinear conjugate gradient method satisfying strong Wolfe conditions.

	We consider a class of linear time-dependent coupled systems. It is assume throughout our analysis that the dynamical system has identifiable parameters. We formulate a nonlinear optimization problem to optimally estimate these parameters through a minimization of a misfit objective function. The optimization problem is formulated in an optimal control framework where the state variable is governed by a general nonlinear dynamic. The optimality system is given for a general nonlinear dynamics. 
	 With the help of the Lagrange multiplier, we take care of the constrained state variable, solution of the dynamical model, and consider an optimal control approach to provide the optimal parameter in term of fitting the given data. The optimality system of such a problem involves both direct and adjoint resolution of the model, and, the optimization technique requires repeating these resolutions at each iteration which helps updating the parameters through a steepest descent gradient. 
	
	 The rest of the paper is organized as follows. In Section~\ref{sec:GS}, we presents the optimal control problem in a general settings where the constraint stands for a nonlinear differential equation that governs the controlled state variable. The optimality system is then derived and monotonic-backtracking algorithm is described. In Section \ref{LipAnal}, we analyze the Lipschitz property of the state variables involved in the optimality system. In Section \ref{ConvAnal}, based on the results of the previous section, we analyze the convergence of the steepest descent method for the optimal control problem using only minimal assumptions on the smoothness of the objective function gradient. We prove that we have indeed a linear convergence rate (depending on the threshold of the iterative algorithm) for the parameter estimation inverse problem. We also report a proof of linear convergence rate of a class of nonlinear conjugate gradient that satisfy strong Wolfe conditions. 
Finally, numerical illustration of the proposed method, in a selection of a well known nonlinear problem, is presented and discussed in Section \ref{NumShow}. Concluding remarks are conducted in Section \ref{Conc} with which we close this paper. 
Throughout the paper, we denote by $\|x\|_{2}$ the Euclidian norm of a given vector $x\in\mathbb{R}^n$ associated with the scalar product $x^Tx$, where $x^T$ stands for the transpose of the vector $x$.
\section{General settings}\label{sec:GS}
For a time interval $[t_i,t_f]$, with $0<t_i<t_f$, a general description of a continuous-time nonlinear dynamical system writes as follows 
\begin{equation}\label{eq1}
\dot{\y}  = \F(t,\y,\u), \qquad\text{ for } t\in [t_i,t_f].
\end{equation}
where $\y$ stands for the state variable s.t. $\y=(y_1(t),\dots,y_i(t),\dots,y_n(t))^T$ is an $n$-by-$1$ unknown function $\in \mathcal{C}^{1}([t_i,t_f])$. $\F$ is defined in some region $B\subset\mathbb{R}^{n+m+1}$. The variable $\u$ stands for the control variable that describes a set of parameters that are involved in the mathematical model. The control variable is chosen among a set of admissible variables belonging to a given space $\U\subset \mathbb{R}^{m}$. In order to ensure existence and uniqueness of the solution of \eqref{eq1}, the function $\F$ needs to be Lipschitz or continuously differentiable in $B=\mathbb{R}\times\V\times\U$. In this sense, a solution to \eqref{eq1} is unique for each control variable $\u$ and a suitable initial condition $\y(t_i)=\y_0$.
We shall use $\langle, \rangle_{\V}$ to indicate the scalar product between vector valued functions in $\V$, which induces the norm $\|\cdot\|_{\V}$.

	 In this work, we shall assume that the function $\F$ is $\xi$-smooth function that has a Lipschitz Jacobian operator $\delta\F$ with Lipschitz constant $\xi$. This is the unique condition we are imposing through our convergence study of the nonlinear optimization problem. Indeed, we are concerned with the following finding 
\begin{equation}\label{argmin}
\u_\text{opt}=\arg\min_{\u\in \U} \J(\u),
\end{equation}
where $\J$ represents a misfit objective function that we are concerned with its minimization. A typical misfit objective function reads as follows 
\begin{equation}\label{objfnct}
\J(\u) = \dfrac{\alpha}{2}\int_{t_i}^{t_f} \|\u(t)\|_{2}^{2}  \dt+ \dfrac{1}{2}\int_{t_i}^{t_f}\|\y(t,\u)-\y^{T}(t)\|_{\V}^{2} \dt
\end{equation}
with the state variable $\y$ is solution to the nonlinear equation \eqref{eq1}. In the above equation \eqref{objfnct}, the first term appearing with $\alpha$ represents a Tickonov's regularization of the control problem. The regularization parameter can be tuned up and a selection of an optimal value of $\alpha$ could be done through an L-curve study or Morozov's discrepancy principle. This kind of analysis exceeds the contents of this paper, we may refer to \cite{ito2015inverse,kazufumi2014inverse,kirsch2011introduction} and references therein for more detailed description. 

Let $\delta\y:=\partial_{2}\y(\h)$ be the derivative of the state variable $\y$ with respect to the vector $\u$ in the direction of the perturbation vector $\h$. We shall assume that the function $\F(t,\y,\u)$ is $\mathcal{C}^1(\U)$ with first derivative Lipschitz-Continuous with respect to $\u$. We have thus
\begin{eqnarray}\label{Lipschicity-of-F}
\F(t,\y,\u+\h) &=&  \F(t,\y,\u) +\delta\F(t,\y,\u;\h) +\N(t,\y,\u;\h)\notag\\
&=& \F(t,\y,\u) + \partial_{\y}\F(t,\y,\u) \delta\y+  \partial_{2}\F(t,\y,\u)\h +\N(t,\y,\u;\h)
\end{eqnarray}
with 
$$\lim_{\du\rightarrow{\bf 0}}\dfrac{\|\N(\u;\du)\|}{\|\du\|_{2}}=0$$

	In a general framework, one can not guarantee a solution to \eqref{argmin} rather than providing a local minimum value of the objective function through a construction of convergent sequence of control $(\u_{k})_k$ to a critical point $\u^\star$. This is often the case for ill-posed problem, where most of the deterministic standard optimization fall into local critical point solution to the following optimality KKT system 
	\begin{equation}\label{KKTGen}
	\begin{cases}
	\delta\J(\u) &= 0\\
	\dot{\y}  - \F(\y,\u) &= 0\\
	\dot{\p}  +\partial_{\y}\F^*(\y,\u)\p&=\y(t,\u)-\y^T(t),
	\end{cases}
	\end{equation}
 from which we can see that a necessary condition for the well posedness of the adjoint equation  \eqref{KKTGen}$_3$ is that the derivative $\delta\F$ should be at least Lipschitz continuous operator. This is indeed, our sufficient condition to prove that the optimization algorithm converges linearly. Starting from giving an expression to the first derivative of the objective function as 
 \begin{equation}\label{GradCostGen}
 \delta\J(\u;\delta\u) = \alpha\int_{t_i}^{t_f} \langle \delta\u(t),\u(t) \rangle_{2} \dt + \int_{t_i}^{t_f} \langle \delta\y(\u;\delta\u.t),\y(t,\u)-\y^T(t) \rangle_{\V} \dt
 \end{equation}
where $\delta\y$ satisfies 
\begin{equation}\label{deltay}
\begin{cases}
\dot{\delta\y} = \delta_\y\F(\y;\u)\delta\y + \partial_{2}\F(\y,\u)\delta\u, \quad t\in[t_i,t_f]\\
\delta\y(t_i) =  0,
\end{cases}
\end{equation}
and, by introducing the adjoint state variable $\p$ solution to 
\begin{equation}\label{Eqadj}
\begin{cases}
-\dot{\p}  - \partial_{\y}\F^*(\y;\u)\p =\y(t,\u)-\y^T(t), \quad t\in[t_f,t_i]\\
\p(t_f) =  0. 
\end{cases}
\end{equation}
the first derivative \eqref{GradCostGen} of the objective function writes therefore
\begin{eqnarray*}
 \delta\J(\u;\delta\u) &=& \alpha\int_{t_i}^{t_f} \langle \delta\u(t),\u(t) \rangle_{2} \dt + \int_{t_i}^{t_f} \langle \partial_{2}\F(\y,\u)\delta\u,\p(t) \rangle_{\V} \dt\\
		      	      &=& \int_{t_i}^{t_f} \langle \alpha  \u + (\partial_{2}\F(\y,\u))^*\p, \delta\u \rangle_{\V} \dt.\\
			      &=&  \int_{t_i}^{t_f} \langle \g(t,\u), \delta\u\rangle_{2} \dt
\end{eqnarray*}
where the gradient of the objective function writes
\begin{equation}\label{gradient}
\g(t,\u) = \alpha  \u + (\partial_{2}\F(\y,\u))^*\p.
\end{equation}
Once the above explicit expression of the gradient \eqref{gradient} is provided. we can proceed with the minimization of the objective function, following the classical descent gradient approach, as stated in the below {\bf Algorithm~\ref{algo1}}
\begin{center}
\begin{algorithm}[H]
\DontPrintSemicolon 
\KwIn{Initial guess $\u^0$, initial condition $\y_0$, Tolerance $\epsilon$.\;}
\KwOut{$\u^\star$ stationary point}
\While{$\|\g_{k}\|_{2}^{2}\geq \varepsilon$} {
	\text{Solve Forward problem} for $\y_{k}$ using \eqref{KKTGen}$_2$ \;
	\text{Solve Backward problem} for $\p_{k}$ using \eqref{KKTGen}$_3$ \;
	\text{Evaluate the gradient} $\g_{k}$ using \eqref{gradient}\;
    	$\u_{k+1} \gets \u_{k}-\frac{1}{2\xi}\g_{k}$ \;
    	$k\gets k+1$\;
}
\Return{$\u_{k+1}$}\;
\caption{\sc Parameter estimations iterative algorithm}
\label{algo1}
\end{algorithm}
\end{center}
Provided with an initial guess $\u^0$, {\bf Algorithm \ref{algo1}} converges properly, but slowly, to the closed critical point. Without a prior knowledge on the Lipschicity of the handled function, it is often hard to feed this algorithm with the Lipschitz constant $\xi$, in this situation the step-length $\frac{1}{2\xi}$ is chosen to be small enough to ensure the minimization process. A monotony-backtracking procedure has been proven to be efficient in this situations. We provide in {\bf Algorithm \ref{algo2}} such technique for such non-linear non-convex optimization framework.   

	 Our analysis relies mainly on the Lipschitz property of the objective function's gradient used for the optimal control problem. We can see from \eqref{gradient} that a Lipschitz property of both $(\partial_{2}\F(\y,\u))^*$ and $\p$ is needed. Actually, following the assumption \eqref{Lipschicity-of-F} we have just to provide Lipschicity constant for the adjoint state $\p(\u)$, which, we recall it, is function of the parameter $\u$ through out the state variable $\y(\u)$.


\section{Lipschicity analysis}\label{LipAnal}

This section, is devoted to rigorously provide the well-posedness of the optimality condition system \eqref{KKTGen} that includes the explicit formula for the objective function gradient, the state variable and the adjoint variable. We shall use necessary conditions, to come up with a Lipschitz property of objective function gradient.
We need to prove that $\y$ is $F$-differentiable with respect to the control variable $\u$. This helps us proving the $F$-differentiability of the objective function. As we have formally shown in the above section, the introduction of the adjoint state variable is necessary to give an explicit mathematical formula of the gradient of the objective function $\J$. Once introduced, we need to provide that the adjoint state variable $\p$ in its turn is Lipschitz with respect to $\u$. 

\begin{proposition} Assume that the source term $\F(t,\y,\u)$ is $F$-differentiable with Lipschitz-continuous first derivative $\delta\F(t,\y,\u)$, this implies that the state variable $\y$ solution to \eqref{eq1} is $F$-differentiable with bounded first derivative, henceforth Lipschitz.
\end{proposition}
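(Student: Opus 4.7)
The plan is to combine the variational equation for $\delta\y$ with a Grönwall-type argument, in four moves: (i) existence and uniqueness of the state trajectory and of its proposed linearization, (ii) an integral identity for the ``remainder'' $\r(t):=\y(t,\u+\h)-\y(t,\u)-\delta\y(t)$, (iii) a Grönwall bound on $\r$ that upgrades it to $o(\|\h\|)$ and yields Fréchet differentiability, and finally (iv) a uniform $L^\infty$-bound on $\delta\y$ giving Lipschitz continuity.

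First I would invoke Cauchy--Lipschitz applied to \eqref{eq1} to guarantee that $\y(\cdot,\u)$ and $\y(\cdot,\u+\h)$ are well defined on $[t_i,t_f]$, and apply the same theorem to the linear non-autonomous Cauchy problem \eqref{deltay} to guarantee existence and uniqueness of $\delta\y$; the coefficient $\partial_\y\F(t,\y(t,\u),\u)$ is continuous and bounded on $[t_i,t_f]$ since $\y$ is continuous on a compact interval and $\delta\F$ is Lipschitz in its arguments. Next, a standard continuity-in-parameters estimate, proved by writing
\begin{equation*}
\y(t,\u+\h)-\y(t,\u)=\int_{t_i}^{t}\bigl[\F(s,\y(\u+\h),\u+\h)-\F(s,\y(\u),\u)\bigr]\ds
\end{equation*}
and invoking the Lipschitz property of $\F$ followed by Grönwall, gives $\|\y(\cdot,\u+\h)-\y(\cdot,\u)\|_\infty\le C\|\h\|_2$ for some constant $C$ depending only on $\xi$ and $t_f-t_i$. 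This preliminary bound is what makes the whole scheme work.

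Now to the heart of the argument: subtract the integrated forms of the three equations to obtain
\begin{equation*}
\r(t)=\int_{t_i}^{t}\!\!\bigl[\F(s,\y(\u+\h),\u+\h)-\F(s,\y(\u),\u)-\partial_\y\F\,\delta\y-\partial_2\F\,\h\bigr]\ds.
\end{equation*}
Adding and subtracting $\partial_\y\F(s,\y(\u),\u)\bigl(\y(\u+\h)-\y(\u)\bigr)$ inside the bracket splits the integrand into a term $\partial_\y\F\cdot\r$ and a remainder of the form $\N(s,\y,\u;\h)$ plus a term controlled by the Lipschitz constant $\xi$ of $\delta\F$ times $\|\y(\u+\h)-\y(\u)\|\,\bigl(\|\y(\u+\h)-\y(\u)\|+\|\h\|\bigr)$, which by the preliminary Grönwall estimate is $O(\|\h\|_2^2)$. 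Applying Grönwall once more to $\|\r(t)\|$ yields $\|\r\|_\infty\le K\|\h\|_2^2=o(\|\h\|_2)$, establishing that $\delta\y$ defined by \eqref{deltay} is indeed the Fréchet derivative $\partial_2\y(\u)\h$.

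Finally, to get boundedness of that derivative, I would apply Grönwall directly to \eqref{deltay}: since $\partial_\y\F$ is bounded by some $M$ on the compact trajectory and $\|\partial_2\F\,\h\|\le\xi\|\h\|_2$, we obtain $\|\delta\y(t)\|\le\xi(t-t_i)\,e^{M(t-t_i)}\|\h\|_2$. Taking the supremum over $t$ and over unit $\h$ bounds the operator norm of $\partial_2\y(\u)$ by a constant independent of $\u$, and the mean-value inequality on the convex set $\U$ then upgrades this to global Lipschitz continuity of $\u\mapsto\y(\cdot,\u)$. The main obstacle is the second step: carefully splitting the nonlinear increment of $\F$ so that the $o(\|\h\|)$ part is genuinely quadratic, which is exactly where the Lipschitz hypothesis on $\delta\F$ (as opposed to mere continuity) is used in an essential way.
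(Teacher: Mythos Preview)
Your argument is correct and follows the standard textbook route for differentiability-in-parameters of ODE flows. The paper arrives at the same decomposition $\y(\u+\h)-\y(\u)=\delta\y+\text{(remainder)}$, but organises the bounds differently: rather than iterating Gr\"onwall's inequality as you do, it introduces the fundamental matrix $\varphi$ of the homogeneous linearised system $\dot z=\partial_\y\F\,z$, writes $\delta\y(t)=\int_{t_i}^{t}\varphi(t)\varphi^{-1}(s)\,\partial_2\F\,\h\,\ds$ via variation of constants, and bounds everything through $\|\varphi(t)\varphi^{-1}(s)\|\le K_\delta$. The remainder is handled by writing $\y_\N(t)=\int_{t_i}^{t}\N\,ds$ and invoking $\N=\O(\|\h\|_2^2)$ directly.

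The two routes are essentially equivalent---the fundamental-matrix bound $K_\delta$ is nothing but the Gr\"onwall exponential $e^{M(t_f-t_i)}$ in disguise---but each has a slight advantage. Your Gr\"onwall approach is more self-contained and makes the quadratic smallness of the remainder $\r$ fully explicit; in particular, your careful splitting in step~(iii), where you add and subtract $\partial_\y\F\cdot(\y(\u+\h)-\y(\u))$ and use the preliminary Lipschitz bound to control the second-order pieces, is exactly the place where the Lipschitz hypothesis on $\delta\F$ does real work, and you identify this correctly. The paper's fundamental-matrix formulation, on the other hand, has the benefit that the very same object $\varphi$ reappears immediately in the proof of the corollary on $\p$, so it economises on notation for what follows. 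One minor caution: in your final step you write $\|\partial_2\F\,\h\|\le\xi\|\h\|_2$, but $\xi$ in the paper is the Lipschitz constant of $\delta\F$, not an a~priori bound on $\partial_2\F$; you should instead invoke continuity of $\partial_2\F$ on the compact trajectory to get some constant $L$, as the paper does.
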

\begin{proof}
consider two different controls variables $\u$ and $\v:=\u+\h$, and call the solution $\y{(t,\u)}$ respectively $\y{(t,\v)}$ the controlled solution associated to the control $\u$ respectively to the control $\v$. Assuming the same initial condition for both solution $\y(\u+\h;0)=\y(\u;0)$. We have 
\begin{eqnarray*}
\dot{\y}{(\u+\h)} &=& \F(t,\y,\u+\h),\\
\dot{\y}{(\u)}      &=& \F(t,\y,\u),
\end{eqnarray*}

taking the difference of these two equations we obtain:
\begin{eqnarray*}
\big(\dot{\y}{(\u+\h)} -\dot{\y}{(\u)} \big)&=& \F(t,\y,\u+\h) -  \F(t,\y,\u)\\
&=& \delta\F(t,\y,\u;\h) +\N(t,\y,\u;\h).
\end{eqnarray*}
Since the right hand side consists in two different contributions that linearly affect the solution $\big(\dot{\y}{(\u+\h)} -\dot{\y}{(\u)} \big)$. The former, could in its turn be seen as a two contributions of two different solutions $\delta\y$ and $\y_\N$. These new variables are solutions to the following two equations
\begin{eqnarray*}
\dot{\delta\y}&=&  \delta\F(t,\y,\u;\h), \\
\dot{\y}_\N &=&  \N(t,\y,\u;\h).
\end{eqnarray*}
Supplemented with the initial condition at $t=0$, $\delta\y(0;\h)=0$ and $\y_\N(0;\h)=0$. More concretely we have 
\begin{equation}\label{dletay}
\dot{\delta\y}  =   \partial_{\y}\F(t,\y,\u) \delta\y+  \partial_{2}\F(t,\y,\u)\h 
\end{equation}
while the second variable $\y_\N$ is driven by the nonlinear Lipschitz-continuous operator $\N$ (in fact it is the difference of two Lipschitz-continuous operators). This implies that the solution $\y_\N$ exists. Furthermore, since the source nonlinear term  $\N(t,\y,\u;\h)\approx\O(\|\h\|_{2}^{2})$ vanishes as $\|\h\|_{2}$ approaches zero, this immediately implies that $\y_\N$ in its turn approaches zero as $\|\h\|_{2}$ approaches zero. Finally we have  
\begin{equation}
\y(\u+\h)-\y(\u) = \delta\y(\u;\h) + \y_\N(\u;\h)
\end{equation}
with 
$$\lim_{\h\rightarrow 0} \dfrac{\|\y_\N(\u;\h)\|_{\V}}{\|\h\|_{2}}=\lim_{\h\rightarrow 0} \O(\|\h\|_{2})=0.$$
Therefore the $F$-differentiability with respect to the control variable $\u$ of the state variable $\y$ solution to the nonlinear equation \eqref{eq1}. 
	
	Let $\varphi$ be the fundamental matrix of the equation $\dot{\delta\y} = \partial_{\y}\F(t,\y,\u) \delta\y$, satisfying $\varphi(t_i)=I_n$. It is clear that under the aforementioned assumptions, the operator $\partial_{\y}\F(t,\y,\u)$ is bounded on the interval $[t_i,t_f]$, which implies that all solution of \eqref{dletay} are bounded thus uniformly stable. Furthermore, there exist a positive constant $K_\delta$ for which we have 
$$\|\varphi(t)\varphi(s)^{-1}\|_\V\leq K_\delta, \quad \text{ for }\,  t_i \leq t \leq t_f.$$
In addition, a solution to \eqref{dletay} writes 
\begin{equation}
\delta\y(t,\u)  = \int_{t_i}^{t} \varphi(t) \varphi^{-1}(s)  \partial_{2}\F(s,\y(s),\u(s))\h(s) \ds,  \text{ for } t_i \leq t \leq t_f
\end{equation}
Therefore the solution $\delta\y(t,\u)$ is easily proven to be bounded as $\partial_{2}\F(t,\y(s),\u(s))$ is. Also, the solution $\y_\N(t,\u;\h)=\int_{t_i}^{t}\N(s,\y,\u,\h)\ds$ is bounded for any $t\in[t_i,t_f]$.  We have for $\h=\v-\u$
\begin{eqnarray*}
\|\y(\v)-\y(\u)\|_\V &=& \|\delta\y(\u;\v-\u) + \y_\N(t,\u,\v-\u)\|_\V\\
&\leq& \|\delta\y(\u;\v-\u)\|_\V + \| \y_\N(t,\u,\v-\u)\|_\V\\
&\leq& K_\delta L  \|\v-\u\|_{2} + N \|\v-\u\|_{2}\\
&=& (K_\delta L+N)  \|\v-\u\|_{2}.
\end{eqnarray*}
The proof is complete.$\hfill\qed$
\end{proof}

\begin{corrollary}[$F$-differentiability of $\p$] 
The adjoint state $\p$ is $F$-differentiable with respect to $\u$. 
\end{corrollary}
\begin{proof}
The adjoint state variable $\p$ is solution to the linear equation \eqref{Eqadj} and writes
$$
\p(t,\u) = \int_{t_i}^{t_f} \varphi^{*}(t)\varphi^{-*} (s) \left( \y(s,\u)-\y^T(s)\right) \ds
$$

\begin{eqnarray*}
\|\p(t,\v)-\p(t,\u)\|_{\V} &=& \big\|\int_{t_i}^{t_f} \varphi^{*}(t)\varphi^{-*} (s) \left( \y(s,\v)-\y(s,\u)\right) \ds\big\|_{\V} \\
&\leq& |t_f-t_i| K_\p \|\y(s,\v)-\y(s,\u)\|_{\V}\\
&\leq& |t_f-t_i| K_\p (K_\delta L+N)  \|\v-\u\|_{2}.
\end{eqnarray*}
The proof is complete.$\hfill\qed$
\end{proof}

\begin{theorem}
The objective function $\J$ is $F$-differentiable with respect to the control variable $\u$ where we have 
\begin{equation}
\J(\u+\h)-\J(\u) = \delta\J(\u;\h) + \O(\|\h\|_{2})
\end{equation}
\end{theorem}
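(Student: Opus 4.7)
The plan is to expand the increment $\J(\u+\h)-\J(\u)$ termwise, invoke the $F$-differentiability of the state $\y$ with respect to $\u$ established in the previous proposition, and finally eliminate the implicit sensitivity $\delta\y$ through the adjoint equation \eqref{Eqadj}. The Tikhonov term expands exactly as
$$\tfrac{\alpha}{2}\int_{t_i}^{t_f}\|\u+\h\|_{2}^{2}\dt - \tfrac{\alpha}{2}\int_{t_i}^{t_f}\|\u\|_{2}^{2}\dt = \alpha\int_{t_i}^{t_f}\langle \u,\h\rangle_{2}\dt + \tfrac{\alpha}{2}\int_{t_i}^{t_f}\|\h\|_{2}^{2}\dt,$$
which already isolates the linear part with a clean quadratic remainder.

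For the misfit term, I would use the algebraic identity
\begin{eqnarray*}
\tfrac{1}{2}\|\y(\u+\h)-\y^T\|_{\V}^{2} - \tfrac{1}{2}\|\y(\u)-\y^T\|_{\V}^{2} &=& \langle \y(\u)-\y^T,\y(\u+\h)-\y(\u)\rangle_{\V} \\
&& + \tfrac{1}{2}\|\y(\u+\h)-\y(\u)\|_{\V}^{2}
\end{eqnarray*}
and substitute the decomposition $\y(\u+\h)-\y(\u) = \delta\y(\u;\h)+\y_{\N}(\u;\h)$ provided by the previous proposition. The leading piece $\int \langle \y(\u)-\y^T,\delta\y\rangle_{\V}\dt$ is precisely the linearized sensitivity appearing in \eqref{GradCostGen}; the cross piece $\int \langle \y(\u)-\y^T,\y_{\N}\rangle_{\V}\dt$ is sublinear in $\|\h\|_{2}$ because $\|\y_{\N}\|_{\V}/\|\h\|_{2}\to 0$; and the purely quadratic piece is controlled by the Lipschitz estimate $\|\y(\u+\h)-\y(\u)\|_{\V}\leq (K_{\delta}L+N)\|\h\|_{2}$ already established, hence is $\O(\|\h\|_{2}^{2})$.

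The next step is to eliminate $\delta\y$ by testing \eqref{deltay} against the adjoint state $\p$ solving \eqref{Eqadj}. Integrating by parts on $[t_i,t_f]$ and using the boundary conditions $\delta\y(t_i)=0$ and $\p(t_f)=0$ transforms the sensitivity term into $\int_{t_i}^{t_f}\langle (\partial_{2}\F(\y,\u))^{*}\p,\h\rangle_{2}\dt$. Adding the Tikhonov first-order contribution yields exactly
$$\delta\J(\u;\h) = \int_{t_i}^{t_f}\langle \alpha\u+(\partial_{2}\F(\y,\u))^{*}\p,\h\rangle_{2}\dt,$$
which matches the gradient formula \eqref{gradient}.

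The only delicate point is the bookkeeping of the remainder: one must check that the sum of the three leftover contributions (two quadratic in $\h$ and one coming from $\y_{\N}$) is sublinear in $\|\h\|_{2}$. This follows immediately from the Lipschicity analysis of the previous section and the sublinearity of $\N$ in \eqref{Lipschicity-of-F}; no new estimate is required. Collecting the three contributions gives the stated expansion $\J(\u+\h)-\J(\u) = \delta\J(\u;\h)+\O(\|\h\|_{2})$ with a remainder satisfying the $F$-differentiability condition, completing the plan.
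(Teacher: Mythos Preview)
Your proposal is correct and follows essentially the same approach as the paper: expand the Tikhonov and misfit terms, substitute the decomposition $\y(\u+\h)-\y(\u)=\delta\y+\y_{\N}$ from the preceding proposition, and bound the three leftover pieces using the Lipschitz estimates already established. The only difference is that your adjoint-elimination step is superfluous here, since $\delta\J(\u;\h)$ in the theorem statement is the expression \eqref{GradCostGen} still containing $\delta\y$; the passage to the explicit gradient \eqref{gradient} via the adjoint was carried out earlier in the paper and is not part of this proof.
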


\begin{proof}
This proof relies on the $F$-differentiability of the state variable $\y$. 
\begin{eqnarray*}
\J(\u+\h) &=&  \dfrac{\alpha}{2} \|\u+\h\|_{2}^{2} + \dfrac{1}{2}\int_{t_i}^{t_f}\|\y(t,\u+\h)-\y^{T}(t)\|_{\V}^{2}\\
&=&  \dfrac{\alpha}{2} \|\u\|_{2}^{2} +  \dfrac{\alpha}{2} \|\h\|_{2}^{2}  + \alpha \langle \u,\h\rangle_{2}   \\
&&+ \dfrac{1}{2}\int_{t_i}^{t_f}\|\y(t,\u)+\delta\y(t,\u;\h) + \y_\N(t,\u;\h)-\y^{T}(t)\|_{\V}^{2}\\
&=& \dfrac{\alpha}{2} \|\u\|_{2}^{2} + \dfrac{1}{2}\int_{t_i}^{t_f}\|\y(t,\u)-\y^{T}(t)\|_{\V}^{2} +  \dfrac{\alpha}{2} \|\h\|_{2}^{2}  + \alpha \langle \u,\h\rangle_{2}   \\
&&+ \dfrac{1}{2}\int_{t_i}^{t_f}\|\delta\y(t,\u;\h) + \y_\N(t,\u;\h)\|_{\V}^{2}\\
&&+ \int_{t_i}^{t_f}\langle \delta\y(t,\u;\h) + \y_\N(t,\u;\h),\y(t,\u)-\y^{T}(t)\rangle_{\V}\\
&=& \J(\u) + \alpha \langle \u,\h\rangle_{2} + \int_{t_i}^{t_f}\langle \delta\y(t,\u;\h) ,\y(t,\u)-\y^{T}(t)\rangle_{\V}\\
&&+ \int_{t_i}^{t_f}\langle  \y_\N(t,\u;\h),\y(t,\u;\h)-\y^{T}(t)\rangle_{\V}\\
&&+\dfrac{\alpha}{2} \|\h\|_{2}^{2} + \dfrac{1}{2}\int_{t_i}^{t_f}\|\delta\y(t,\u;\h) + \y_\N(t,\u;\h)\|_{\V}^{2}\\
&=& \J(\u) + \delta\J(\u;\h)+ \int_{t_i}^{t_f}\langle  \y_\N(t,\u;\h),\y(t,\u)-\y^{T}(t)\rangle_{\V}\\
&&+\dfrac{\alpha}{2} \|\h\|_{2}^{2} + \dfrac{1}{2}\int_{t_i}^{t_f}\|\delta\y(t,\u;\h) + \y_\N(t,\u;\h)\|_{\V}^{2}
\end{eqnarray*}

\begin{eqnarray*}
\left|\J(\u+\h)-\J(\u) - \delta\J(\u;\h) \right| &\leq& \|\y_R(\u;\h)\|_{\V}\|\y(t,\u;\h)-\y^{T}(t)\|_{\V}\\
&& \dfrac \alpha 2 \|\h\|_{2}^{2} + \|\delta \y_{L}(t,\u;\h)\|_{\V} \|\delta \y_{R}(t,\u;\h)\|_{\V}\\
&&\leq \O(\|\h\|_{2}).
\end{eqnarray*}
The proof is complete.$\hfill\qed$
\end{proof}

\section{Convergence Analysis}\label{ConvAnal}
We shall present in this section, rate of convergence results related to the steepest descent method and the nonlinear conjugate gradient methods satisfying strong Wolfe conditions. 
At the first stage, we assume that the objective function is $\xi$-smooth function i.e. has a  Lipschitz gradient. Without any further assumptions the convergence of the steepest descent method could be proven to be linear with a rate lying between half and one. The claimed rate could certainly be improved  once additional properties of the objective function are given. 
Beside, in the second stage a nonlinear conjugate method is then considered. Satisfying strong Wolfe conditions NCG methods are shown to be linearly convergent is the second derivative of the objective function is bounded. 

\subsection{Rate of convergence for the gradient descent method}
	For the steepest descent method, we will restrict our selfs in the necessary conditions (that ensures existence of the solution) on the dynamical model, and the following results holds.
\begin{theorem} 
Algorithm \ref{algo1} converges linearly with rate $\zeta$ satisfying $\dfrac 1 2 < \zeta < 1$,when minimizing a $\xi$-smooth objective function
\end{theorem}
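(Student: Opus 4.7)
The plan is to leverage the $\xi$-smoothness of $\J$ (established in the previous section) to produce a sufficient-decrease inequality and then convert it into a geometric contraction of $\J(\u_k) - \J^\star$ up to the stopping threshold. First I would apply the standard descent lemma for $\xi$-smooth functions,
$$\J(\u + \h) \leq \J(\u) + \langle \g(\u), \h\rangle_2 + \frac{\xi}{2}\|\h\|_2^2,$$
to the iterate prescribed by Algorithm~\ref{algo1}, namely $\u_{k+1} - \u_k = -\frac{1}{2\xi}\g_k$. Direct substitution yields the sufficient-decrease inequality
$$\J(\u_{k+1}) \leq \J(\u_k) - \frac{3}{8\xi}\|\g_k\|_2^2,$$
so the sequence $\{\J(\u_k)\}$ is monotone decreasing and, being bounded below by $\J^\star := \inf\J$, converges.

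The second ingredient is the companion upper bound on the gradient: testing the descent lemma at $\u_k$ in the direction $-\frac{1}{\xi}\g_k$ and comparing with $\J^\star$ gives $\|\g_k\|_2^2 \leq 2\xi(\J(\u_k) - \J^\star)$. Setting $\zeta_k := (\J(\u_{k+1})-\J^\star)/(\J(\u_k)-\J^\star)$, the sufficient-decrease inequality combined with this upper bound yields $\zeta_k < 1$ whenever $\g_k \neq 0$. Because the main loop of Algorithm~\ref{algo1} only runs while $\|\g_k\|_2^2 \geq \varepsilon$, and because monotonicity guarantees $\J(\u_k) - \J^\star \leq \J(\u_0) - \J^\star$, the bound becomes uniform:
$$\zeta_k \leq 1 - \frac{3\varepsilon}{8\xi\,(\J(\u_0) - \J^\star)} =: \zeta < 1.$$
A straightforward induction then gives $\J(\u_k) - \J^\star \leq \zeta^{k}(\J(\u_0) - \J^\star)$, i.e.\ linear convergence with a rate whose explicit dependence on the threshold $\varepsilon$ matches the statement.

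The step I expect to require the most care is the lower bound $\zeta > \tfrac{1}{2}$ asserted in the theorem. The elementary sandwich above only pins $\zeta_k$ away from $1$; to confine $\zeta_k$ inside $(1/2, 1)$ one needs a sharper lower estimate of $\J(\u_{k+1}) - \J^\star$ in terms of $\J(\u_k) - \J^\star$. A natural route is to combine the Lipschitz estimate $\|\g_{k+1} - \g_k\|_2 \leq \xi\|\u_{k+1} - \u_k\|_2 = \tfrac{1}{2}\|\g_k\|_2$, which implies $\|\g_{k+1}\|_2 \geq \tfrac{1}{2}\|\g_k\|_2$, with a reverse application of the descent lemma to control $\J(\u_{k+1}) - \J^\star$ from below by a constant fraction of $\J(\u_k) - \J^\star$. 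Packaging these two-sided estimates and checking that the resulting constant is compatible with the interval $(1/2,1)$ for the range of $\varepsilon$ of interest is the delicate part; the rest of the argument is a standard unrolling of the geometric recurrence.
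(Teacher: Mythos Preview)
Your upper-bound argument for $\zeta<1$ is correct and considerably more direct than the paper's. The paper, however, takes a genuinely different route. Rather than tracking $\tilde\J_k:=\J(\u_k)-\J^\star$ directly, it works with the tail sum $R_k:=\sum_{\ell\geq k}\|\g_\ell\|_2$. The argument (i) sums the sufficient-decrease inequality to bound $\sum_\ell\|\g_\ell\|_2^2$; (ii) uses the mean-value theorem together with Lipschitz continuity of $\g$ to obtain a two-sided estimate $c_1\|\g_k\|_2\leq\sqrt{\tilde\J_k}\leq c_2\|\g_k\|_2$; (iii) exploits concavity of $\sqrt{\cdot}$ to turn the decrease of $\tilde\J_k$ into a lower bound on $\sqrt{\tilde\J_k}-\sqrt{\tilde\J_{k+1}}$; and (iv) telescopes to obtain $R_{k+1}\leq\zeta R_k$ with the explicit rate $\zeta=1-\gamma_k^2/(2\xi)$, where $\gamma_k^2=\varepsilon_k\xi/(\varepsilon_k+2\xi\tilde\J_0)$. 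Because $0<\gamma_k^2<\xi$ by construction, the bracket $1/2<\zeta<1$ falls out automatically from the algebraic form of the rate.

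Your direct approach buys simplicity for the $\zeta<1$ half, but it does \emph{not} deliver $\zeta>1/2$ as a byproduct: your rate $1-\tfrac{3\varepsilon}{8\xi\tilde\J_0}$ ranges over $[1/4,1)$ under the natural constraint $\varepsilon\leq\|\g_0\|_2^2\leq 2\xi\tilde\J_0$. The reverse Lipschitz estimate $\|\g_{k+1}\|_2\geq\tfrac{1}{2}\|\g_k\|_2$ you propose is correct, but converting it into a lower bound on $\tilde\J_{k+1}/\tilde\J_k$ would require a companion inequality of the form $\tilde\J_k\leq C\|\g_k\|_2^2$ --- essentially a Polyak--\L{}ojasiewicz condition --- which is unavailable under mere $\xi$-smoothness. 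The paper sidesteps this obstruction by manufacturing such an inequality \emph{through the stopping threshold} (your $\varepsilon$ reappears as their $\varepsilon_k$ inside $\gamma_k^2$) and by choosing $R_k$ rather than $\tilde\J_k$ as the contracted quantity; it is precisely this combination that pins the rate above $1/2$. So your sketch for the lower bound is not just ``the delicate part'' --- as written it is a genuine gap, and the paper's heavier machinery is what closes it.
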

Before we start the proof, let us define the shifted objective function 
$$
\tilde\J_{k}  = \J_{k} - \J(\u^\star),
$$
which will be useful in the sequel.
\begin{proof}
Thanks to Taylor theorem we have
\begin{eqnarray*}
\J_{k+1}  &=& \J(\u_{k}-t\g_{k})   \\
&=& \J_{k} - t\|\g_{k}\|_{2}^{2} -t \int_{0}^{1} \langle \g\big(\u_{k}-st\g_{k}\big)-\g_{k},\g_{k} \rangle_{2} \ds \\
&\leq& \J_{k} - t\|\g_{k}\|_{2}^{2}  +t \|\g_{k} \| \int_{0}^{1} \| \g\big(\u_{k}-st\g_{k}\big)-\g_{k}\|_{2}  \ds \\
&\leq& \J_{k} - t \|\g_{k}\|_{2}^{2}  + \dfrac{t^2\xi}{2}\|\g_{k}\|_{2}^{2} \\
&\leq& \J_{k} + t \left(\dfrac{t\xi}{2}-1\right)\|\g_{k}\|_{2}^{2},
\end{eqnarray*}
for which any $0<t\leq \frac 2 \xi$ ensures the monotony of the sequence $\left(\J_{k}\right)_k$. For simplicity, we shall fix in the sequel $t=\frac{1}{\xi}$ to obtain 
\begin{equation}\label{J-Monotony}
\dfrac{1}{2\xi}\|\g_{k}\|_{2}^{2} \leq \J_{k}-\J_{k+1} 
\end{equation}
Summing up the first $k$ iteration in \eqref{J-Monotony} we obtain 
\begin{eqnarray*}
\dfrac{1}{2\xi} \sum_{\ell=0}^{k-1}  \|\g_{\ell}\|_{2}^{2} &\leq& \J(\u^{0}) - \J_{k} \\
&\leq& \J(\u^{0}) - \J(\u^{\star}) \\
&:=& \tilde\J(\u^{0})\\& =& \tilde\J_{0}.
\end{eqnarray*}
In addition, because of $\left(\J_{k}\right)_k$ is a convergent Cauchy sequence, the above inequality holds true as well for an infinite sum. In particular we have 
\begin{equation}\label{normGradBndedBlw}
 \sum_{\ell=0}^{\infty}  \|\g_{\ell}\|_{2}^{2}    \leq   2\xi \tilde\J(\u^{0}) 
\end{equation}
In the other hand, we have
\begin{eqnarray}
\tilde\J_{k} - \tilde\J_{k+1} &=& \dfrac{1}{2\xi} \int_{0}^{1} \langle\g(\u_{k} - \dfrac{s}{2\xi} \g_{k}),\g_{k} \rangle_{2} \ds\notag\\
&=& \dfrac{1}{2\xi}  \langle\g(\u_{k} - \dfrac{\tau}{2\xi} \g_{k}),\g_{k} \rangle_{2}\notag\\
&\leq& \dfrac{1}{2\xi} \|\g_{k}\|_{2} \|\g(\tilde\u_{k})\|_{2} \label{difflow}.
\end{eqnarray}
Where we have used the mean value theorem for the second inequality and considered $\tilde\u_{k}=\u_{k} - \dfrac{\tau}{2\xi} \g_{k}$ in the third inequality after using Cauchy-Schwartz. It is worth recalling that in this interpolation $\tau\in(0,1)$. Thanks to the fact that the gradient is a Lipschitz-continuous function and the continuity of the norm inequality, we have
\begin{eqnarray*}
\bigg| \|\g(\tilde\u_{k})\|_{2} - \|\g_{k}\|_{2} \bigg| &\leq& \|\g(\tilde\u_{k})- \g_{k}\|_{2}\\
&\leq & \xi \|\tilde\u_{k}-\u_{k}\|_{2}\\
&\leq & \dfrac{\tau}{2} \|\g_{k}\|_{2}
\end{eqnarray*}
Therefore, 
$$
\|\g(\tilde\u_{k})\|_{2} \leq \left(\dfrac{\tau}{2}+1\right) \|\g_{k}\|_{2} \leq 2 \|\g_{k}\|_{2}
$$
Henceforth, the inequality \eqref{difflow} becomes 
\begin{equation*}
\tilde\J_{k} - \tilde\J_{k+1} \leq \dfrac{1}{\xi} \|\g_{k}\|_{2}^{2},
\end{equation*}
which leads, after summing up terms, to 
\begin{eqnarray}
\tilde\J_{k}  &\leq& \dfrac{1}{\xi} \sum_{\ell\geq k}  \|\g_{\ell}\|_{2}^{2}\notag\\
&\leq& \dfrac{1}{\xi} \|\g_{k}\|_{2}^{2} \left( 1+ \dfrac{\sum_{\ell\geq k+1}  \|\g_{\ell}\|_{2}^{2}}{\|\g_{k}\|_{2}^{2}} \right)\label{upperJ_0}\\
&\leq& \dfrac{1+ 2\xi\tilde\J(\u^0)/\varepsilon_{k}}{\xi} \|\g_{k}\|_{2}^{2} \label{upperJ_1}
\end{eqnarray}
In order to prove \eqref{upperJ_1} we have used \eqref{normGradBndedBlw} together with the fact that before convergence we have $\|\g_{k}\|_{2}^{2}\geq \varepsilon_{k}$ in \eqref{upperJ_0}, where $\varepsilon_{k}$ is any sequence that converges asymptotically to the zero. Note that we always can find such a non-necessarily vanishing sequence that lower bound the length of the gradient and asymptotically equivalent to $\|\g_k\|_{2}$.

In its turn \eqref{upperJ_1} gives 
\begin{equation}\label{SqrtUperBnd}
\sqrt{\tilde\J_{k} } \leq  \sqrt{\dfrac{\varepsilon_{k}+2\xi\tilde\J(\u^0)}{\varepsilon_{k}\xi}} \|\g_{k}\|_{2}
\end{equation}
Henceforth, 
\begin{equation}\label{InvSqrtJ}
\dfrac{1}{\sqrt{\tilde\J_{k}}} \geq \sqrt{\dfrac{\varepsilon_{k}\xi}{\varepsilon_{k}+2\xi\tilde\J(\u^0)}} \dfrac{1}{ \|\g_{k}\|_{2}}
\end{equation} 
	Furthermore, since $\sqrt{x}$ is a concave function then it is bounded above by its first order Taylor expansion. Indeed, we have
$$
\sqrt{\tilde\J_{k+1}} \leq \sqrt{\tilde\J_{k}} + \dfrac{\J_{k+1}-\J_{k}}{2\sqrt{\tilde\J_{k}}} 
$$
then, using \eqref{J-Monotony} and \eqref{InvSqrtJ}  we obtain
\begin{equation*}
\sqrt{\tilde\J(\u_{k} )} - \sqrt{\tilde\J_{k+1}} \geq  \dfrac{1}{2\xi} \sqrt{\dfrac{\varepsilon_{k}\xi}{\varepsilon+2\xi\tilde\J(\u^0)}} \|\g_{k}\|_{2}.
\end{equation*}
which we sum up for $\ell\geq k$ to have
\begin{equation}\label{SqrtLwerBnd}
\sqrt{\tilde\J(\u_{k} )}  \geq  \dfrac{1}{2\xi} \sqrt{\dfrac{\varepsilon_{k}\xi}{\varepsilon_{k}+2\xi\tilde\J(\u^0)}} \sum_{\ell\geq k}^{}\|\g_{\ell}\|_{2}.
\end{equation}
Now, combining \eqref{SqrtUperBnd} and \eqref{SqrtLwerBnd} we have
\begin{equation}\label{PlainTheInequality}
  \dfrac{1}{2\xi} \sqrt{\dfrac{\varepsilon_{k}\xi}{\varepsilon_{k}+2\xi\tilde\J(\u^0)}} \sum_{\ell\geq k}^{}\|\g_{\ell}\|_{2} 
  \leq \sqrt{\dfrac{\varepsilon_{k}+2\xi\tilde\J(\u^0)}{\varepsilon_{k}\xi}} \|\g_{k}\|_{2}
\end{equation}
By setting $\gamma_{k}= \sqrt{\dfrac{\varepsilon_{k}\xi}{\varepsilon_{k}+2\xi\tilde\J(\u^0)}}$, and $\R_k=\sum_{\ell\geq k}^{}\|\g_{\ell}\|_{2}$, \eqref{PlainTheInequality} becomes 
\begin{equation*}
  \dfrac{\gamma_{k}}{2\xi} R_k
  \leq \dfrac{1}{\gamma_{k}} \left(R_{k}-R_{k+1}\right)
\end{equation*}
Therefore
\begin{eqnarray*}
R_{k+1} &\leq& \left(\dfrac{2\xi}{\gamma_{k}^2}-1 \right) \big\slash \left( \dfrac{2\xi}{\gamma_{k}^2}\right)\\
 &\leq& \left( \dfrac{2\xi -\gamma_{k}^2}{2\xi} \right) R_k\\
  &\leq& \left( \dfrac{2\xi -\gamma_{k}^2}{2\xi} \right)^{k} R_0\\
    &=& \zeta_{k} R_0\\
\end{eqnarray*}
It is then clear that the rate $\zeta\in(\dfrac 12,1)$, which ends the proof.$\hfill\qed$ 
\end{proof}

\begin{algorithm}[!htbp]
\DontPrintSemicolon 
\KwIn{Initial guess $\u^0$, initial condition $\y_0$, steplength $\alpha_{\bullet}$, maximum iterations $k_\text{max}$}
\KwOut{$\u^\star$ stationary point}
$k \gets 1$\;
$\text{Flag} \gets True$\;
	\text{Solve Forward problem} for $\y_{k}$ using \eqref{KKTGen}$_2$ \;
	\text{Solve Backward problem} for $\p_{k}$ using \eqref{KKTGen}$_3$ \;
	\text{Evaluate the gradient} $\g_{k}$ using \eqref{gradient}\;	
\While{$k <k_\text{max} \,\&\& \,\|\g_{k}\|_{2}^{2}\geq \varepsilon$} {
\uIf{$Flag$}{
	$\alpha \gets \alpha_{\bullet}$\;
	\text{Evaluate the gradient} $\g_{k}$ using \eqref{gradient}\;
	$\u_\text{new} \gets \u_{k} + \alpha \g_{k}$\;
	$k_{\bullet}\gets k$\;
    }
    \Else{
    $\alpha\gets\alpha/2$\;
	$\u_\text{new} \gets \u_{k} + \alpha \g_{k}$    
    }
$k\gets k_{\bullet} + 1$\;
$\u_{k+1}\gets \u_\text{new}$\;
	\text{Solve Forward problem} for $\y_{k}$ using \eqref{KKTGen}$_2$ \;
	\text{Solve Backward problem} for $\p_{k}$ using \eqref{KKTGen}$_3$ \;
$Flag \gets \text{logical}(\J(k)<\J(k_{\bullet}))$\;
}
\Return{$\u_{k}$};\;
\caption{{\sc Parameter estimations adaptive monotony-backtracking algorithm}}
\label{algo2}
\end{algorithm}

In Algorithm 2, we present an enhanced version of Algorithm 1, where a monotony-backtracking based approach is implemented. Indeed, in order to make sure that the objective functional gets decreasing throughout the iterations, we adjust the step length of the steepest gradient descent to be smaller as necessary to ensure the monotony of the optimization. This is a sort of dummy line search, although, it guarantees convergence of the gradient method and avoid any possible cancelation nearby the stationary point if the step-length has been badly chosen initially. 
\subsection{Rate of convergence for a class of nonlinear conjugate gradient methods with inexact line search}

The nonlinear conjugate gradient (NCG) method applies to a problem of minimization of nonlinear nonquadratic real-valued functions. Usually, there are two ways which the NCG can be used; the "continued" method and the "restarted" method. In the later,  after every $n$ iterations, all data except the best previous point are discarded and the new iterations restart all over again from that point, hence rebuild a new sequence of conjugate directions. In practice, it has been generally proven that the restarting NCG method performs better than the continued method. Actually, in \cite{cohen1972rate} it has been shown through examples that the continued method has convergence rate at worst linear, while a quadratic rate of convergence might be achieved with the restarted method \cite{cohen1972rate,mccormick1974alternative}. We refer to \cite{hager2006survey,Dai2011} for a recent survey on the global convergence results related to different NCG methods previously and recently published. 

	In this work, our effort focuses on the continued version of the NCG and provides linear convergence rate for the majority of a classical well-known methods.
	
	In general context, conjugate gradient methods aim at minimizing a given objective function, say $\J(\u_{})$, by updating the variable $\u_{k}$ as follow 
\begin{equation}\label{NCG_update}
\u_{k+1}=\u_{k} + \alpha_{k} \d_{k},
\end{equation}
where at a given iteration $k$, $\alpha_{k}>0$ stands for the step-length that needs to be determined along the descent search direction $\d_{k}$ defined by 
\begin{eqnarray}\label{NCG_direction}
\d_{k} = \left\{\begin{array}{lr}
-\g_{k}, & \text{for } k=1\\
-\g_{k} + \beta_{k}\d_{k-1}, & \text{for } k\geq2
\end{array}\right.
\end{eqnarray}
with $\g_{k}=\nabla\J(\u_{k})$ and $\beta_{k}>0$ is a parameter, with which we distinguish a NCG method from another.  The first attempt to extend the linear conjugate gradient (from the quadratic minimization problem to a fully nonlinear) starts with \cite{fletcher1964function}.

Some well known formulas for $\beta_{k}$ are given by the Fletcher-Reeves (FR) method \cite{fletcher1964function}, Polak-Ribi\`ere \cite{pola1969note}, Hestenes-Stiefel (HS) method \cite{hestenes1952methods}, and Dai-Yuan (DY) method \cite{dai1999nonlinear}. These methods define $\beta_{k}$ by 
\begin{eqnarray}
\beta_{k}^{FR} &=& \|\g_{k}\|_{2}^{2}  / \|\g_{k-1}\|_{2}^{2} \label{NCGMFR}\\
\beta_{k}^{PR} &=&\g_{k}^T \triangle\g_{k-1} / \|\g_{k-1}\|_{2}^{2}\label{NCGMPR}\\
\beta_{k}^{HS} &=&\g_{k}^T \triangle\g_{k-1} / \d_{k}^T\triangle\g_{k-1}\label{NCGMHS}\\
\beta_{k}^{DY} &=&\|\g_{k}\|_{2}^{2} / \d_{k}^T\triangle\g_{k-1}\label{NCGMDY},  
\end{eqnarray}
where $\triangle\g_{k} = \g_{k}-\g_{k-1}$. 

The global convergence properties of the above methods in their continued version (i.e. without restarts) have been investigated by many authors, such that Zoutendijk \cite{zoutendijk1970nonlinear}, Al- Baali \cite{al1985descent}, Liu, Han, and Yin \cite{guanghui1995global}, Dai and Yuan \cite{dai1996convergence}, Powell \cite{powell1984nonconvex}, Gilbert and Nocedal \cite{gilbert1992global}, and Dai and Yuan \cite{dai1995further}. To establish the convergence results of these methods, it is normally required that the step-length $\alpha_{k}$ satisfy the following strong Wolfe conditions (Fletcher's and Goldstein requirement) respectivelly
\begin{eqnarray}\label{WolfCond}
\J_{k+1} - \J_{k} &\leq \rho \alpha_{k} \g_{k}^T \d_{k}\label{Fletcher}\\
\left|\g_{k+1}^T\d_{k} \right| &\leq -\sigma \g_{k}^T\d_{k}\label{Goldstein}
\end{eqnarray}
where $\sigma\in(0,\frac 1 2]$ and $0\leq \rho\leq\sigma$. The Goldstein requirement \eqref{Goldstein} is often regarded as a relaxed extension of the exact line search since it reduces to the later if $\sigma$ vanishes. Equation \eqref{Goldstein} ensures, indeed, that the modulus of the slope is reduced by a factor of $\sigma$ or less through the line search.

	For our analysis let us recall the following results
\begin{theorem}\cite[Al-Baali: Theorem 1]{al1985descent}\label{AlBaali}
If $\alpha_{k}$ satisfies \eqref{Fletcher}-\eqref{Goldstein} with $\sigma\in(0,\frac 1 2]$ for all k ($\g_{k}\neq 0$), then the descent property for the nonlinear conjugate gradient \eqref{NCG_update}-\eqref{NCG_direction} (s.t. \eqref{NCGMFR}-\eqref{NCGMDY}) method holds for all such $k$, more precisely 
\begin{equation}\label{AlBAAliTHM}
\dfrac{-1}{1-\sigma} \leq \dfrac{\g_{k}^T\d_{k}}{\|\g_{k}\|_{2}^{2}} \leq \dfrac{2\sigma-1}{1-\sigma},
\end{equation}
\end{theorem}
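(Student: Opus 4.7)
My plan is induction on $k$, following the structure of Al-Baali's original argument for Fletcher--Reeves and then checking that the same template survives for the other three $\beta_k$ choices. For the base case $k=1$, the formula $\d_1=-\g_1$ gives $\g_1^T\d_1/\|\g_1\|_2^2=-1$, and an elementary verification shows $-1\in[-1/(1-\sigma),(2\sigma-1)/(1-\sigma)]$ for every $\sigma\in(0,1/2]$ (the left endpoint is $\leq -1$ and the right endpoint is $\geq -1$ throughout this range).

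For the inductive step I would first handle the FR case, which is the cleanest. Assuming \eqref{AlBAAliTHM} at index $k$, I take the inner product of $\d_{k+1}=-\g_{k+1}+\beta_{k+1}^{FR}\d_k$ with $\g_{k+1}$, divide by $\|\g_{k+1}\|_2^2$, and exploit the cancellation $\beta_{k+1}^{FR}=\|\g_{k+1}\|_2^2/\|\g_k\|_2^2$ to arrive at the key identity
\[
\dfrac{\g_{k+1}^T\d_{k+1}}{\|\g_{k+1}\|_2^2}=-1+\dfrac{\g_{k+1}^T\d_k}{\|\g_k\|_2^2}.
\]
I would then bound the correction term with the strong Wolfe condition \eqref{Goldstein}, namely $|\g_{k+1}^T\d_k|\leq -\sigma\,\g_k^T\d_k$, combined with the inductive upper bound on $\g_k^T\d_k/\|\g_k\|_2^2$, which since $(2\sigma-1)/(1-\sigma)\leq 0$ forces $-\g_k^T\d_k/\|\g_k\|_2^2\leq 1/(1-\sigma)$. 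This gives $|\g_{k+1}^T\d_k|/\|\g_k\|_2^2\leq \sigma/(1-\sigma)$, and rearrangement yields exactly $-1/(1-\sigma)\leq \g_{k+1}^T\d_{k+1}/\|\g_{k+1}\|_2^2 \leq (2\sigma-1)/(1-\sigma)$.

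The main obstacle is that the pivotal cancellation used above is special to FR. For PR the normalized identity reads $\g_{k+1}^T\d_{k+1}/\|\g_{k+1}\|_2^2=-1+(\g_{k+1}^T\triangle\g_k)(\g_{k+1}^T\d_k)/(\|\g_{k+1}\|_2^2\,\|\g_k\|_2^2)$, and for HS/DY the denominator becomes $\d_k^T\triangle\g_{k-1}=\d_k^T\g_k-\d_k^T\g_{k-1}$, which one must keep signed and bounded away from zero using \eqref{Goldstein} at two consecutive iterations together with the induction hypothesis. My strategy is to present FR as the template, then for each remaining choice re-derive the analogous one-step identity and check that the perturbation from $-1$ is again controlled by $\sigma/(1-\sigma)$, which is the only numerical content needed to preserve the invariant interval $[-1/(1-\sigma),(2\sigma-1)/(1-\sigma)]$. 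I expect this case analysis to be the bulk of the work, with FR taking a few lines and each of PR, HS, DY requiring a short but distinct algebraic manipulation.
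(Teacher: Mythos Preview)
The paper does not give a self-contained proof here: it simply cites Al-Baali's Theorem~1 and remarks that the argument is ``by induction.'' Your induction for the Fletcher--Reeves choice \eqref{NCGMFR} is exactly Al-Baali's proof, so on that part you and the paper coincide.

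The genuine gap is in your final paragraph. Al-Baali's theorem, as published, is for FR only; the paper's statement extends it to \eqref{NCGMPR}--\eqref{NCGMDY} without supplying any additional argument, and your expectation that each remaining case closes with ``a short but distinct algebraic manipulation'' is not correct for PR and HS. The key FR identity $\g_{k+1}^T\d_{k+1}/\|\g_{k+1}\|_2^2=-1+\g_{k+1}^T\d_k/\|\g_k\|_2^2$ relies on the exact cancellation $\beta_{k+1}^{FR}\|\g_k\|_2^2=\|\g_{k+1}\|_2^2$. For PR one has instead $\beta_{k+1}^{PR}=\beta_{k+1}^{FR}-\g_{k+1}^T\g_k/\|\g_k\|_2^2$, and the cross term $\g_{k+1}^T\g_k$ is \emph{not} controlled by the strong Wolfe conditions \eqref{Fletcher}--\eqref{Goldstein} alone; this is precisely why Powell's 1984 examples (cited in the paper as \cite{powell1984nonconvex}) show PR can lose the descent property and even cycle. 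HS inherits the same obstruction through its numerator $\g_{k+1}^T\triangle\g_k$. So for PR and HS the invariant interval cannot be maintained without extra hypotheses (e.g.\ $\beta_k^{PR+}=\max\{0,\beta_k^{PR}\}$ or a bound $|\beta_k|\le\beta_k^{FR}$), none of which are assumed in the statement.

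For DY your plan does go through, and in fact cleanly: writing $c=-\g_k^T\d_k>0$ and using \eqref{Goldstein} one gets $\d_k^T\triangle\g_k\in[(1-\sigma)c,(1+\sigma)c]$, whence $\g_{k+1}^T\d_{k+1}/\|\g_{k+1}\|_2^2=\g_k^T\d_k/\d_k^T\triangle\g_k\in[-1/(1-\sigma),-1/(1+\sigma)]$, which lies inside the Al-Baali interval. So the honest summary is: FR and DY are fine by induction; PR and HS are not, and neither your proposal nor the paper's citation closes that gap.
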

\begin{proof}
See \cite[Theorem 1]{al1985descent}. The proof is by induction arguments for any nonlinear conjugate gradient method \eqref{NCG_update}-\eqref{NCG_direction} that satisfies strong Wolfe conditions \eqref{Fletcher}-\eqref{Goldstein}.
\end{proof}
From \eqref{AlBAAliTHM} we can derive the following bound 
\begin{equation}\label{AL-BAALI-INEQ}
\|\g_{k}\|_{2} \leq \left(\dfrac{1-\sigma}{1-2\sigma}\right) |\g_{k}^T \d_{k}|.
\end{equation}

	Using the above equation we state the following Lemma.
\begin{lemma}\label{LemmaRIAHI}
If the descent direction $\d_{k}$ defined as \eqref{NCG_direction}  satisfies the condition \eqref{Goldstein}, we then have for any integer $\ell>k+1$
\begin{equation}\label{EqLemmaRIAHI}
\left|\g_{\ell}^T \d_{\ell} \right| \leq (\beta\sigma)^{\ell-k-1} \left|\g_{k}^T \d_{k} \right|,
\end{equation}
where $\beta = \max_{k+1\leq j\leq \ell}\beta_{j}$.
\end{lemma}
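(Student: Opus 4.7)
The plan is to proceed by induction on $\ell$ for $\ell > k+1$, exploiting the recursion $\d_\ell = -\g_\ell + \beta_\ell \d_{\ell-1}$ together with the strong Wolfe condition \eqref{Goldstein} to extract a one-step contraction factor of $\beta_\ell \sigma \le \beta \sigma$ that then compounds geometrically over the $\ell - k - 1$ line-search steps separating iteration $k$ from iteration $\ell$.

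First I would establish the one-step recursion. Taking the inner product of the direction-update formula with $\g_\ell$ gives
\[
\g_\ell^T \d_\ell \;=\; -\|\g_\ell\|_2^2 + \beta_\ell \, \g_\ell^T \d_{\ell-1},
\]
so that the triangle inequality produces
\[
|\g_\ell^T \d_\ell| \;\leq\; \|\g_\ell\|_2^2 + \beta_\ell \, |\g_\ell^T \d_{\ell-1}|.
\]
The Wolfe curvature condition \eqref{Goldstein}, applied to the line search from $\u_{\ell-1}$ along $\d_{\ell-1}$, yields $|\g_\ell^T \d_{\ell-1}| \leq -\sigma \g_{\ell-1}^T \d_{\ell-1} = \sigma |\g_{\ell-1}^T \d_{\ell-1}|$, which tames the cross term, and $\beta_\ell \leq \beta$ by definition of $\beta$.

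The principal obstacle is the residual $\|\g_\ell\|_2^2$ term, for which the Wolfe condition by itself gives no control. I would absorb it via Theorem~\ref{AlBaali}: the two-sided Al-Baali estimate \eqref{AlBAAliTHM} (equivalently the consequence \eqref{AL-BAALI-INEQ}) provides a linear comparison between $\|\g_\ell\|_2^2$ and $|\g_\ell^T \d_\ell|$, so moving the quadratic contribution to the left-hand side and rearranging leaves the clean one-step contraction
\[
|\g_\ell^T \d_\ell| \;\leq\; \beta_\ell \sigma \, |\g_{\ell-1}^T \d_{\ell-1}| \;\leq\; \beta \sigma \, |\g_{\ell-1}^T \d_{\ell-1}|.
\]

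Finally, iterating this one-step inequality from index $\ell$ down to index $k+1$ accumulates exactly $\ell-k-1$ copies of the contraction factor $\beta\sigma$, yielding \eqref{EqLemmaRIAHI}; the base case $\ell=k+2$ is a single application of the recursion. The delicate part is the sign bookkeeping when combining \eqref{AL-BAALI-INEQ} with the decomposition of $\g_\ell^T \d_\ell$, since Al-Baali's inequality is two-sided and the $\|\g_\ell\|_2^2$ term must be absorbed on the correct side; the standing hypothesis $\sigma \in (0,\tfrac{1}{2}]$, which already underpins the descent property in Theorem~\ref{AlBaali}, is what guarantees that this absorption goes through with the desired sign.
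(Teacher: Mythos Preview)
The absorption step you describe does not go through. After the triangle inequality you have
\[
|\g_\ell^T\d_\ell| \;\le\; \|\g_\ell\|_2^2 + \beta_\ell\sigma\,|\g_{\ell-1}^T\d_{\ell-1}|,
\]
and you propose to control $\|\g_\ell\|_2^2$ by Al-Baali's estimate and move it to the left. But \eqref{AlBAAliTHM} (read for the squared norm) gives $\|\g_\ell\|_2^2 \le \tfrac{1-\sigma}{1-2\sigma}\,|\g_\ell^T\d_\ell|$, and the constant $\tfrac{1-\sigma}{1-2\sigma}$ is \emph{at least} $1$ for every $\sigma\in(0,\tfrac12]$. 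Substituting and transferring to the left leaves the coefficient $1-\tfrac{1-\sigma}{1-2\sigma}=-\tfrac{\sigma}{1-2\sigma}\le 0$ in front of $|\g_\ell^T\d_\ell|$, so the resulting inequality is vacuous and cannot be rearranged into the ``clean one-step contraction'' $|\g_\ell^T\d_\ell|\le\beta_\ell\sigma\,|\g_{\ell-1}^T\d_{\ell-1}|$ you claim. Your closing remark that $\sigma\in(0,\tfrac12]$ ``guarantees that this absorption goes through with the desired sign'' is exactly where the argument breaks: that range of $\sigma$ forces the coefficient to be nonpositive, not positive. The triangle inequality at the outset discards the sign cancellation between $-\|\g_\ell\|_2^2$ and the (negative) quantity $\g_\ell^T\d_\ell$, and Al-Baali's bound is not sharp enough to recover it afterwards.

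The paper's route is different and does not call on Theorem~\ref{AlBaali} at all inside this lemma. It keeps the two terms grouped,
\[
\g_{k+1}^T\d_{k+1} + \|\g_{k+1}\|_2^2 \;=\; \beta_{k+1}\,\g_{k+1}^T\d_k,
\]
takes absolute values of both sides, applies \eqref{Goldstein} to obtain $\bigl|\g_{k+1}^T\d_{k+1}+\|\g_{k+1}\|_2^2\bigr|\le\beta_{k+1}\sigma\,|\g_k^T\d_k|$, and then passes directly to $|\g_{k+1}^T\d_{k+1}|\le\beta_{k+1}\sigma\,|\g_k^T\d_k|$ before bootstrapping. If you wish to follow the paper you should avoid splitting via the triangle inequality. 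Be aware, however, that the paper's own passage---dropping the nonnegative $\|\g_{k+1}\|_2^2$ from inside an absolute value whose other summand $\g_{k+1}^T\d_{k+1}$ is negative---is itself asserted without justification, so the difficulty you sensed is real; it is simply not resolved by the mechanism you propose.
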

\begin{proof}
Starting from \eqref{NCG_direction} we can write 
$$\g_{k+1}^T\d_{k+1}   + \|\g_{k+1}\|_{2}^{2} = \beta_{k+1} \g_{k+1}^T\d_{k}, \quad \text{ for } k\geq 2$$
thus 
$$\left|\g_{k+1}^T\d_{k+1}   + \|\g_{k+1}\|_{2}^{2}  \right|= \left| \beta_{k+1} \g_{k+1}^T\d_{k} \right|\leq \beta_{k}\sigma |\g_{k}^T\d_{k}|,$$
having used \eqref{Goldstein}. Therefore 
$$
\dfrac{\left|\g_{k+1}^T\d_{k+1} \right|}{\left|\g_{k}^T\d_{k} \right|} \leq \beta_{k+1}\sigma,
$$
and the result follow using bootstrapping argument as 
$$
\dfrac{\left|\g_{\ell}^T\d_{\ell} \right|}{\left|\g_{k}^T\d_{k} \right|} \leq \sigma^{\ell-k-1} \prod_{j=k+1}^{\ell}\beta_{j}.
$$
It is finally clear that, with $\beta=\max_{j}\beta_{j}$, we have 
\begin{equation}
\dfrac{\left|\g_{\ell}^T\d_{\ell} \right|}{\left|\g_{k}^T\d_{k} \right|} \leq \left(\beta\sigma\right)^{\ell-k-1}.
\end{equation}
The proof is complete. $\hfill\qed$
\end{proof}
Equation \eqref{EqLemmaRIAHI} in the light of \eqref{AL-BAALI-INEQ} leads immediately to the following 
strong global convergence result.

It is worth noticing that the Al-Baali's proof of global convergence\cite[Theorem. 2]{al1985descent} has been derived using mainly \eqref{AlBAAliTHM} to find contradiction, where is has been supposed that $\sigma< 1/2$. Liu et al. \cite{Guanghui1995} extended the result to the case that $\sigma=1/2$ then \cite{doi:10.1093/imanum/16.2.155} simplified the proof further. The following result demonstrates strong global convergence result under certain condition. 

\begin{theorem}
If $\alpha_{k}$ satisfies \eqref{Fletcher}-\eqref{Goldstein} with $\sigma\in(0,\frac 1 2]$ for all $k>1$ ($\g_{k}\neq 0$), and if 
$$\sigma^{-\iota}\prod_{j=1}^{\infty} \left(\sigma \beta_{j}\right) <\infty,$$
 for a positive number $\iota$, $k>>\iota>1$ large enough such that $\lim_{i\rightarrow\iota}\sigma^{\iota}=0$; Then the NCG \eqref{NCG_update}-\eqref{NCG_direction} converges strongly in the following sense
\begin{equation}
\|\g_{k}\|_{2} \leq \|\g_{1}\|_{2}^{2}\left(\dfrac{1-\sigma}{1-2\sigma}\right) \sigma^{\iota} \left(\prod_{j=1}^{k-1} (\sigma\beta_{j}) \right)\dfrac{1}{\sigma^{\iota}}
\end{equation}
\end{theorem}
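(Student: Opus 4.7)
The plan is to combine the two ingredients that have just been proved: Al-Baali's descent bound in equation \eqref{AL-BAALI-INEQ}, which controls $\|\g_{k}\|_{2}$ in terms of the slope $|\g_{k}^{T}\d_{k}|$, and the geometric contraction of the slopes provided by Lemma \ref{LemmaRIAHI}. Since both results hold under the strong Wolfe conditions \eqref{Fletcher}--\eqref{Goldstein} with $\sigma\in(0,\tfrac12]$, no further structural hypothesis on $\J$ is needed, only the positivity of the $\beta_{j}$ coefficients so that the max in the definition of $\beta$ in Lemma~\ref{LemmaRIAHI} is finite.

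First I would specialize the bootstrapping inequality that appears inside the proof of Lemma~\ref{LemmaRIAHI}, namely
\[
\frac{|\g_{\ell}^{T}\d_{\ell}|}{|\g_{k}^{T}\d_{k}|}\;\leq\;\sigma^{\ell-k-1}\prod_{j=k+1}^{\ell}\beta_{j},
\]
to the endpoint case $k=1$, renaming $\ell\mapsto k$. Because the first search direction is $\d_{1}=-\g_{1}$, the initial slope simplifies to $|\g_{1}^{T}\d_{1}|=\|\g_{1}\|_{2}^{2}$, so that
\[
|\g_{k}^{T}\d_{k}|\;\leq\;\|\g_{1}\|_{2}^{2}\prod_{j=1}^{k-1}(\sigma\beta_{j}).
\]
The next step is to feed this into Al-Baali's bound \eqref{AL-BAALI-INEQ} to obtain
\[
\|\g_{k}\|_{2}\;\leq\;\left(\frac{1-\sigma}{1-2\sigma}\right)\|\g_{1}\|_{2}^{2}\prod_{j=1}^{k-1}(\sigma\beta_{j}).
\]
Multiplying and dividing by $\sigma^{\iota}$ yields exactly the claimed expression; this normalization is cosmetic in the finite-$k$ bound, but it is precisely the form needed to invoke the standing hypothesis $\sigma^{-\iota}\prod_{j=1}^{\infty}(\sigma\beta_{j})<\infty$ in order to conclude that the right-hand side stays finite as $k\to\infty$, which is the content of the strong global convergence assertion.

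To close the argument I would note that the hypothesis $\sigma\in(0,\tfrac12]$ is what keeps the Al-Baali constant $(1-\sigma)/(1-2\sigma)$ well-defined and positive (with the boundary value $\sigma=\tfrac12$ handled exactly as in the Liu--Han--Yin refinement cited before the statement), and that the normalization by $\sigma^{\iota}$ with $\iota>1$ chosen so that $\sigma^{\iota}\to 0$ is what upgrades the bare summability of the slopes into linear rate information: the infinite product $\prod_{j}(\sigma\beta_{j})$ is forced to decay at least like $\sigma^{\iota}$ by the assumption, giving a genuine geometric envelope on $\|\g_{k}\|_{2}$.

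The main obstacle I anticipate is purely combinatorial: keeping the index bookkeeping consistent between the product $\prod_{j=1}^{k-1}$ in the statement and the product $\prod_{j=k+1}^{\ell}$ coming out of the bootstrapping in Lemma~\ref{LemmaRIAHI}, together with the off-by-one in the exponent of $\sigma$ (the bootstrapping gives $\sigma^{\ell-k-1}$, which becomes $\sigma^{k-2}$ after the endpoint substitution, and must be reconciled with $\sigma^{k-1}$ appearing inside $\prod(\sigma\beta_{j})$). This is exactly what the normalization factor $\sigma^{\iota}/\sigma^{\iota}$ absorbs, so the care is in placing it cleanly rather than in any deep estimate.
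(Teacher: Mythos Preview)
Your approach is essentially the paper's: bootstrap the one-step slope inequality $|\g_{k}^{T}\d_{k}|\le \sigma\beta_{k}|\g_{k-1}^{T}\d_{k-1}|$ down to $k=1$, use $\d_{1}=-\g_{1}$ to replace $|\g_{1}^{T}\d_{1}|$ by $\|\g_{1}\|_{2}^{2}$, and then cap $\|\g_{k}\|_{2}$ via \eqref{AL-BAALI-INEQ}. The only substantive difference is the treatment of the endpoint $\sigma=\tfrac12$: the paper does not defer to Liu--Han--Yin but instead argues directly that the indeterminate form $\bigl(\tfrac{1-\sigma}{1-2\sigma}\bigr)\sigma^{\iota}$ tends to zero as $\sigma\to\tfrac12$ by l'H\^opital's rule, which is precisely why the $\sigma^{\iota}$ factor is not merely cosmetic but carries the weight of the boundary case.
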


\begin{proof}
Is straightforward by bootstrapping argument. Indeed, starting from 
$$
{\left|\g_{k}^T\d_{k} \right|} \leq \beta_{k}\sigma {\left|\g_{k-1}^T\d_{k-1} \right|},
$$
leads to 
$$
{\left|\g_{k}^T\d_{k} \right|} \leq \sigma^{k-1}\prod_{j=1}^{k-1}\beta_{j} {\left|\g_{1}^T\d_{1} \right|},
$$
using \eqref{NCG_direction}$_1$ yields 
$$
{\left|\g_{k}^T\d_{k} \right|} \leq \sigma^{k-1}\prod_{j=1}^{k-1}\beta_{j} \| \g_{1} \|_{2}^{2}.
$$
The case of $\sigma\in(0,\frac 12)$ is straightforward, while for the extreme case of $\sigma=1/2$ we have the indeterminate form ($\infty\cdot0$) coming from
$(\dfrac{1-\sigma}{1-2\sigma}) \sigma^{\iota}$ which could easily be proven to be convergent to zero using l'Hospital's rule limit theorem as $\sigma\rightarrow 1/2$ with a relatively big enough $k>>\iota\geq 1$. Indeed, 
$$
\lim_{\sigma\rightarrow1/2} (\dfrac{1-\sigma}{1-2\sigma}) \sigma^{\iota} =
\lim_{\sigma\rightarrow1/2} \dfrac{\sigma^{\iota}}{(\dfrac{1-2\sigma}{1-\sigma})}\equiv
\lim_{\sigma\rightarrow1/2} \iota\sigma^{\iota-1} (1-\sigma)^2=0.
$$
The proof is complete.$\hfill\qed$
\end{proof}

It is worth noticing that, the condition on $\beta_{k}$ plays an important role on ensuring global convergence~\cite{Dai2011}. 

In the sequel we shall consider such $\beta=\max_{j}\beta_{j}$ satisfying $\sigma\beta\leq 1$. Furthermore, under boundedness condition of the eigenvalues of the second derivative of the objective function we show a linear convergence rate of the NCG. The convergence results is stated in the following theorem.

\begin{theorem}
The NCG method \eqref{NCG_update}-\eqref{NCG_direction} with $\beta_{k}$ as in \eqref{NCGMFR},\eqref{NCGMPR},\eqref{NCGMHS} and \eqref{NCGMDY} converges linearly with the following rate
\begin{equation}\label{NCG-LinearConvergence}
0\leq \left(1-\dfrac{1-\beta\sigma}{2-\beta\sigma}\dfrac{\alpha_\text{min}\lambda_\text{min}(1-\sigma)}{\alpha_\text{max}\lambda_\text{max}(1+\sigma)}\rho\right) \dfrac{1-\sigma}{1-2\sigma} < 1
\end{equation}
If
\begin{itemize}
\item the step-length $\alpha_{k}>0$ satisfies \eqref{Goldstein} with $\sigma\in(1,\frac 1 2]$ for all $k$, then the descent property of the Nonlinear conjugate gradient holds
\item the second derivative $\nabla^{2}\J$ is positive definite with extreme eigenvalues $\lambda_\text{max}$ and $\lambda_\text{min}$ respectively.
\item there exist an upper bound $\beta$ for $\beta_{k}$ such that $\beta\sigma\leq 1$
\end{itemize}
\end{theorem}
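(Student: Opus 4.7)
The plan is to obtain a one-step contraction on the shifted objective value $\tilde\J_k := \J_k - \J(\u^\star)$ by the same philosophy as the steepest-descent proof, but now exploiting the specific NCG geometry through Lemma~\ref{LemmaRIAHI}. First I would couple Fletcher's sufficient decrease \eqref{Fletcher} with the Al-Baali corollary \eqref{AL-BAALI-INEQ} to obtain
$$\tilde\J_k - \tilde\J_{k+1} \;\geq\; \rho\,\alpha_k\,|\g_k^T\d_k| \;\geq\; \rho\,\alpha_{\min}\frac{1-2\sigma}{1-\sigma}\|\g_k\|_2^2,$$
and then convert $\|\g_k\|_2^2$ into $\tilde\J_k$ through the Polyak-style two-sided bounds $\frac{1}{2\lambda_{\max}}\|\g_k\|_2^2 \leq \tilde\J_k \leq \frac{1}{2\lambda_{\min}}\|\g_k\|_2^2$, which follow from the assumed positive-definiteness of $\nabla^2\J$. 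This already yields a linear contraction; the remaining work is to sharpen it to the precise rate stated in the theorem.

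The $(1\pm\sigma)$ and $\alpha_{\min}/\alpha_{\max}$, $\lambda_{\min}/\lambda_{\max}$ factors in the advertised rate point to the mean-value identity $\g_{k+1}-\g_k = \alpha_k H_k \d_k$, where $H_k := \int_0^1 \nabla^2\J(\u_k + t\alpha_k\d_k)\,dt$ satisfies $\lambda_{\min}I \preceq H_k \preceq \lambda_{\max}I$. Taking the inner product with $\d_k$ and using the Goldstein curvature condition \eqref{Goldstein} gives the two-sided control
$$\frac{(1-\sigma)|\g_k^T\d_k|}{\lambda_{\max}} \;\leq\; \alpha_k\|\d_k\|_2^2 \;\leq\; \frac{(1+\sigma)|\g_k^T\d_k|}{\lambda_{\min}}.$$
I would then combine this with the Taylor expansion of $\J_{k+1}$ around $\u_k$, employing both spectral bounds on the Hessian, to upgrade the crude initial estimate into one carrying the full $\frac{\alpha_{\min}\lambda_{\min}(1-\sigma)}{\alpha_{\max}\lambda_{\max}(1+\sigma)}$ prefactor that multiplies $\rho$ in the rate.

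The remaining combinatorial coefficient $\frac{1-\beta\sigma}{2-\beta\sigma}$ would emerge from invoking Lemma~\ref{LemmaRIAHI} and telescoping. The one-index bound $\sum_{\ell\geq k}|\g_\ell^T\d_\ell| \leq \frac{1}{1-\beta\sigma}|\g_k^T\d_k|$ follows immediately from the geometric decay $(\beta\sigma)^{\ell-k-1}$; the double-tail sum, in the spirit of the manipulation \eqref{SqrtLwerBnd}--\eqref{PlainTheInequality} used earlier, is what produces the specific fraction. Finally, the outer multiplier $\frac{1-\sigma}{1-2\sigma}$ comes from re-invoking \eqref{AL-BAALI-INEQ} once more when translating the gradient-based contraction back onto $\tilde\J_k$ at iterate $k+1$.

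The main obstacle I anticipate is bookkeeping rather than any single hard inequality: the rate is a product of three essentially independent contributions (curvature via $1\pm\sigma$, conditioning via $\lambda_{\min}/\lambda_{\max}$ and $\alpha_{\min}/\alpha_{\max}$, and tail geometry via $\beta\sigma$), each entering through a distinct inequality, and they must be assembled so that the outer factor $\frac{1-\sigma}{1-2\sigma}$---which blows up as $\sigma\to 1/2$---does not inflate the product past unity. Verifying compatibility across the admissible parameter range, and in particular pinning down how small $\rho$ must be chosen so that the advertised factor lies strictly in $[0,1)$, will be the delicate step.
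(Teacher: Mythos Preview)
Your ingredient list is largely right --- Al-Baali's descent bound, the mean-value identity through the Hessian producing the $(1\pm\sigma)$ and $\lambda_{\min}/\lambda_{\max}$ factors, and Lemma~\ref{LemmaRIAHI} yielding the geometric factor $\tfrac{1-\beta\sigma}{2-\beta\sigma}$ --- but the assembly you propose diverges from the paper's in one structural way and misses one concrete step.

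First, the paper does \emph{not} pass through Polyak--\L{}ojasiewicz type bounds $\tfrac{1}{2\lambda_{\max}}\|\g_k\|^2\le\tilde\J_k\le\tfrac{1}{2\lambda_{\min}}\|\g_k\|^2$ at all, nor does it aim for a one-step contraction on $\tilde\J_k$. Instead it works entirely with the quantities $|\g_k^T\d_k|$ and the tail sum $\mathcal R_k:=\sum_{\ell\ge k}|\g_\ell^T\d_\ell|$, and the contraction it proves is $\mathcal R_{k+1}\le(1-\kappa)\mathcal R_k$. The lower bound on $\tilde\J_k$ comes exactly as you say: Fletcher plus the two Hessian/Goldstein estimates give $\tilde\J_k-\tilde\J_{k+1}\ge \tfrac{\lambda_{\min}(1-\sigma)}{\lambda_{\max}(1+\sigma)}\rho\,\alpha_k|\g_k^T\d_k|$, which summed from $k$ to $\infty$ yields $\tilde\J_k\ge \tfrac{\lambda_{\min}(1-\sigma)}{\lambda_{\max}(1+\sigma)}\rho\,\alpha_{\min}\,\mathcal R_k$.

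Second, the piece you do not identify is the matching \emph{upper} bound on the decrement. The paper observes that $\theta(s):=-\alpha_k\g(\u_k+s\alpha_k\d_k)^T\d_k$ has $\theta'(s)=-\alpha_k^2\d_k^T\nabla^2\J(\cdot)\d_k\le 0$ by positive definiteness, so $\tilde\J_k-\tilde\J_{k+1}=\int_0^1\theta(s)\,ds\le\theta(0)=\alpha_k|\g_k^T\d_k|\le\alpha_{\max}|\g_k^T\d_k|$. Summing this from $k$ to $\infty$ and invoking Lemma~\ref{LemmaRIAHI} to control $\sum_{\ell\ge k+1}|\g_\ell^T\d_\ell|/|\g_k^T\d_k|\le(1-\beta\sigma)^{-1}$ gives $\tilde\J_k\le\alpha_{\max}\tfrac{2-\beta\sigma}{1-\beta\sigma}|\g_k^T\d_k|$. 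Combining the two bounds on $\tilde\J_k$ yields $\kappa\mathcal R_k\le|\g_k^T\d_k|=\mathcal R_k-\mathcal R_{k+1}$ with $\kappa=\tfrac{1-\beta\sigma}{2-\beta\sigma}\tfrac{\alpha_{\min}\lambda_{\min}(1-\sigma)}{\alpha_{\max}\lambda_{\max}(1+\sigma)}\rho$, and the final factor $\tfrac{1-\sigma}{1-2\sigma}$ enters only at the very end when \eqref{AL-BAALI-INEQ} converts the $\mathcal R_k$-decay into a statement about $\|\g_k\|$. Your PL-based opening would give some linear rate, but to land on the exact constant in the theorem you need this monotonicity-of-$\theta$ step and the $\mathcal R_k$ framing rather than a direct $\tilde\J_k$ contraction.
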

\begin{proof}
Assume $\lambda_\text{min}$ is the lowest eigenvalue of the second derivative positive definite operator $\nabla^{2}\J$. We have
$$
\int_{0}^{1}\alpha_{k}\d_{k}^T \nabla^{2}\J(\u_{k}+s\alpha_{k}\d_{k})\d_{k}\, ds  = \g_{k+1}^{T}\d_{k} -  \g_{k}^{T}\d_{k}
$$
which gives 
$$
\alpha_{k} \lambda_\text{min} \|\d_{k}\|^{2} \leq -(\sigma+1) \g_{k}^{T}\d_{k}
$$
having used \eqref{Goldstein}. Therefore we have the following upper bound of the magnitude of the directions $\d_{k}$
\begin{equation}\label{UpperBoundMagnitudeofd}
\|\d_{k}\|^{2} \leq \dfrac{\sigma+1}{\lambda_\text{min}\alpha_{k}} |\g_{k}^{T}\d_{k}|.
\end{equation}
Furthermore, If $\lambda_\text{max}$ is an upper bound to $\|\nabla^2\J(\u)\|$, we have,
\begin{equation*}
\g_{k+1}^T\d_{k}\leq \g_{k}^T\d_{k} + \lambda_\text{max} \alpha_{k}\|\d_{k}\|^{2}
\end{equation*}
which we combine with \eqref{Goldstein} to obtain
\begin{equation*}
\alpha_{k} \geq -\dfrac{1-\sigma}{\lambda_\text{max}\|\d_{k}\|_{2}^{2}} \g_{k}^T\d_{k} 
\end{equation*}
which in its turn plugged into \eqref{Fletcher} yields
\begin{equation*}
\tilde\J_{k}-\tilde\J_{k+1} \geq \dfrac{\rho(1-\sigma)}{\lambda_\text{max}} \dfrac{|\g_{k}^T\d_{k}|^2}{\|\d_{k}\|^{2}}
\end{equation*}
Further use of \eqref{UpperBoundMagnitudeofd} gives
\begin{equation*}
\tilde\J_{k}-\tilde\J_{k+1} \geq \dfrac{\lambda_\text{min}(1-\sigma)}{\lambda_\text{max}(1+\sigma)}\rho\alpha_{k} |\g_{k}^T\d_{k}|
\end{equation*}
which we sum up to obtain 
\begin{equation}\label{lowerBoundForJ}
\tilde\J_{k} \geq \dfrac{\lambda_\text{min}(1-\sigma)}{\lambda_\text{max}(1+\sigma)}\rho\alpha_\text{min} \sum_{\ell=k}^{\infty} |\g_{k}^T\d_{k}|
\end{equation}

In the other hand, if we consider the real valued function $\theta(s)=-\alpha_{k}\g(\u_{k}+s\alpha_{k}\d_{k})^{T}\d_{k}$ for $s\in(0,1)$. We have $\theta^\prime \leq 0$ hence it is decreasing function over $(0,1)$. 

	Indeed, $\theta^\prime(s) = -\alpha_{k}^2 \d_{k}^{T}\nabla^{2}\J(\u_{k}+s\alpha_{k}\d_{k}) \d_{k}$ is negative since $\nabla^2\J$ is positive matrix. It follows immediately the following upper bound that overestimates the integral 
$$\int_{0}^{1} \theta(s)\,ds \leq \theta(0)$$
which rewrites 
$$
-\int_{0}^{1}\alpha_{k}\g(\u_{k}+s\d_{k})^{T}\d_{k} \, ds \leq -\alpha_{k}\g_{k}^{T}\d_{k} \leq \alpha_\text{max} |\g_{k}^{T}\d_{k}|
$$
 Thus, 
\begin{equation*}\label{}
\tilde\J_{k}-\tilde\J_{k+1} \leq \alpha_\text{max} |\g_{k}^{T}\d_{k}|
\end{equation*}
Henceforth, summing up term from $k$ to infinity we have 
\begin{equation*}\label{}
\tilde\J_{k} \leq \alpha_\text{max} \sum_{\ell=k}^{\infty} |\g_{\ell}^{T}\d_{\ell}|
\end{equation*}
which we rewrite as 
\begin{eqnarray}\label{}
\tilde\J_{k} &\leq& \alpha_\text{max} \left|\g_{k}^T\d_{k}\right| \left(1+ \sum_{\ell\geq k+1} \dfrac{\left|\g_{\ell}^T\d_{\ell}\right|}{\left|\g_{k}^T\d_{k}\right|} \right).
\end{eqnarray}
Now, we use the results of Lemma~\ref{LemmaRIAHI} to state that 
$$\sum_{\ell\geq k+1} \dfrac{\left|\g_{\ell}^T\d_{\ell}\right|}{\left|\g_{k}^T\d_{k}\right|} \leq  \sum_{\ell\geq k+1} (\beta\sigma)^{\ell-k}= \sum_{\ell\geq0} (\beta\sigma)^\ell \leq \sum_{\ell\geq0} (\beta\sigma)^\ell = \dfrac{1}{1-\beta\sigma}.$$
and obtain, 
\begin{equation}
\tilde\J_{k} \leq \alpha_\text{max}\dfrac{2-\beta\sigma}{1-\beta\sigma} \left|\g_{k}^T\d_{k}\right|\label{upperBoundForJ},
\end{equation}
 
 	 Henceforth, Combining \eqref{upperBoundForJ} with \eqref{lowerBoundForJ} we obtain 
\begin{equation*}
\dfrac{1-\beta\sigma}{2-\beta\sigma}\dfrac{\alpha_\text{min}\lambda_\text{min}(1-\sigma)}{\alpha_\text{max}\lambda_\text{max}(1+\sigma)}\rho \sum_{\ell=k}^{\infty} |\g_{\ell}^T\d_{\ell}|\leq \left|\g_{k}^T\d_{k}\right|
\end{equation*}
Let $\kappa:=\dfrac{1-\beta\sigma}{2-\beta\sigma}\dfrac{\alpha_\text{min}\lambda_\text{min}(1-\sigma)}{\alpha_\text{max}\lambda_\text{max}(1+\sigma)}\rho $. We have $0\leq \kappa\leq 1$.

	 Now consider $\mathcal R_{k} = \sum_{\ell\geq k} |\g_{k}^{T}\d_{k}|$ and rewrite the above inequality to 
$$\kappa\mathcal R_{k}  \leq \left(\mathcal R_{k} - \mathcal R_{k+1}\right) .$$
Hence, 
$$\mathcal R_{k+1}  \leq  \left(1 -\kappa\right) \mathcal R_{k}  .$$
The result follows from \eqref{AL-BAALI-INEQ}.  
The proof is complete.$\hfill\qed$
\end{proof}

\section{Numerical illustration}\label{NumShow}
We present in this section, some applications of parameter estimation gradient-based methods to certain nonlinear models. We shall present the optimality condition conducted with the optimal control problem, as described earlier in Section \ref{sec:GS}. 
	
	In order to avoid falling into local minima, a sampling procedure has been utilized with a small step (depending on the handled problem). This procedure generates a coarse grid over which a global search is made, which helps to pre-estimates the initial guess for the optimization algorithm. This is a popular approach in parameter estimations problems that we will use in all our numerical experiments.
	
	In the sequel, all optimization procedures are concerned with the minimization of the following objective functional 
$$\J(\u) = \dfrac{\alpha}{2}\|\u(t)\|_{2}^{2} + \dfrac{1}{2}\int_{t_i}^{t_f}\|\y(t,\u)-\y^{T}(t)\|_{\V}^{2} \dt$$ 
where the state variable is subject to the constraint of the given dynamical model.
\subsection{FitzHugh-Nagumo model}
In the FitzHugh-Nagumo (FHN) model,
\begin{equation}\label{FHN}
\begin{array}{ccc}
\dot{v}(t)   &=& -v(v-a)(v-1)+I_0\\
\dot{w}(t) &=& \epsilon (v -dw).
\end{array}\end{equation}
 $v$ stands for the membrane potential measurable variable, $w$ stands for the unmeasurable recovery variable. $I_0$ stands for the injected current stimulus, while $a,\epsilon,d$ represent the unknown parameters. Following Section~\ref{sec:GS} we have

\begin{eqnarray*}
 \partial_{\y}\F(t,\y,\gamma)
 \begin{pmatrix}
 \delta v \\ \delta w
 \end{pmatrix}&=&-
  \begin{pmatrix}
(\gamma_{1}+3\gamma_{2} v^2)\delta v - \delta w \\
\gamma_{4}\delta v + \gamma_{3}\delta w
 \end{pmatrix}\\
\partial_{\gamma}\F(t,\y,\gamma)
\begin{pmatrix}\delta\gamma_{1}\\\delta\gamma_{2}\\\delta\gamma_{3}\\\delta\gamma_{4}\\\delta\gamma_{5}\end{pmatrix}
&=&
\begin{pmatrix}
\delta\gamma_{1} v + \delta\gamma_{2} v^3\\\delta\gamma_{4} v+\delta\gamma_{3} w + \delta\gamma_{5}
\end{pmatrix}.
\end{eqnarray*}
The optimality system for the FHN model includes \eqref{FHN} with the adjoint state equation 
\begin{equation}\begin{array}{ccc}
 \dot{p}(t) &=& -\gamma_{1} p(t) - 3\gamma_{2}v^3(t) p(t) + \gamma_{4} q(t) +v(t)-v^T(t)\\
\dot{q}(t) &=& p(t)+\gamma_3 q(t) +w(t)-w^T(t).
\end{array}\end{equation}
Thus the gradient writes
\begin{equation}
\g(\gamma) = \alpha\begin{pmatrix}\gamma_{1}\\\gamma_{2}\\\gamma_{3}\\\gamma_{4}\\\gamma_{5}\end{pmatrix}+
\begin{pmatrix}
v p \\ v^2 p \\ w q \\ v q \\ q
\end{pmatrix}.
\end{equation}

The numerical simulation of the parameter estimation for the FHN model are reported in Table~\ref{FHN-data} supplemented with plots of the fit that are depicted in Fig.\ref{FHN-plot}. The results show identification of the parameters $a,b,\epsilon$ and $d$ involved in the mathematical model \eqref{FHN}.

\begin{figure}[htbp]
\begin{tabular}{ccc}
\raisebox{-.6\height}{\includegraphics[height=5cm,width=6cm]{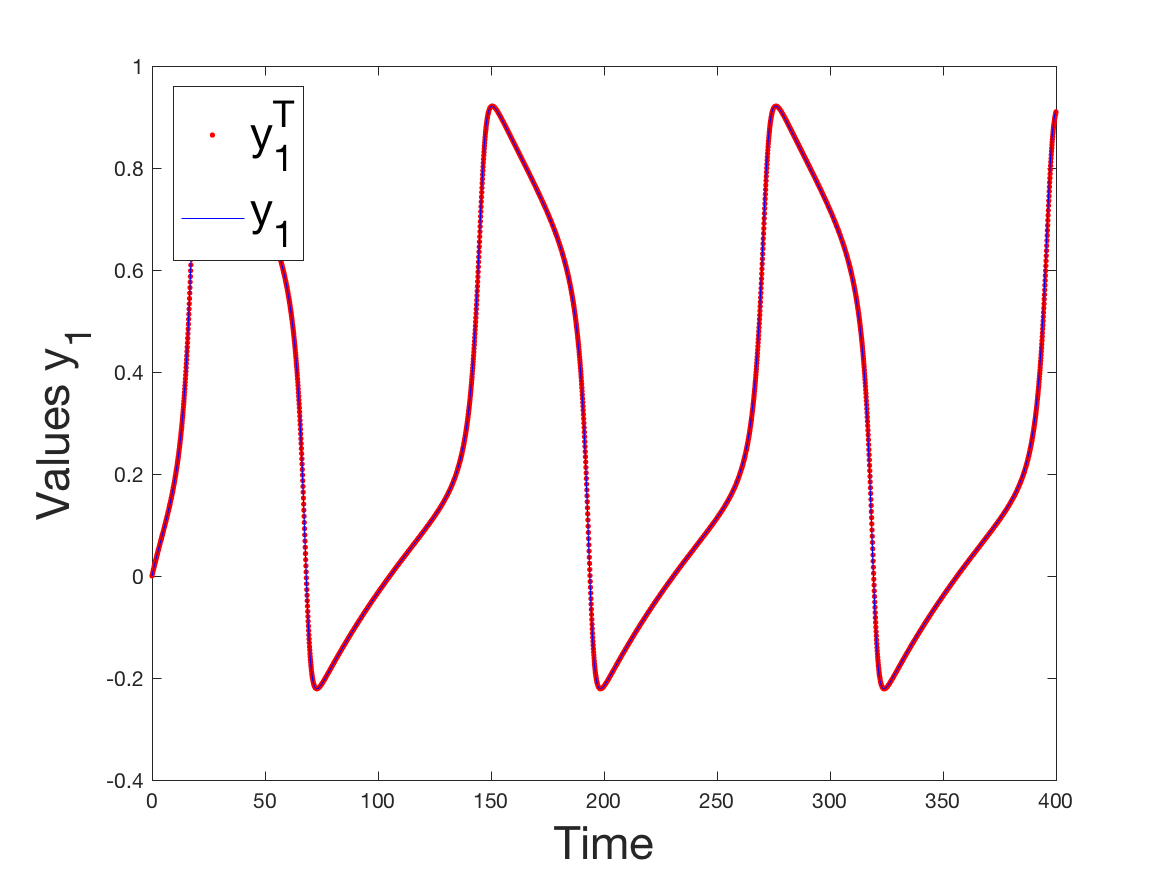}}&
\raisebox{-.6\height}{\includegraphics[height=5cm,width=6cm]{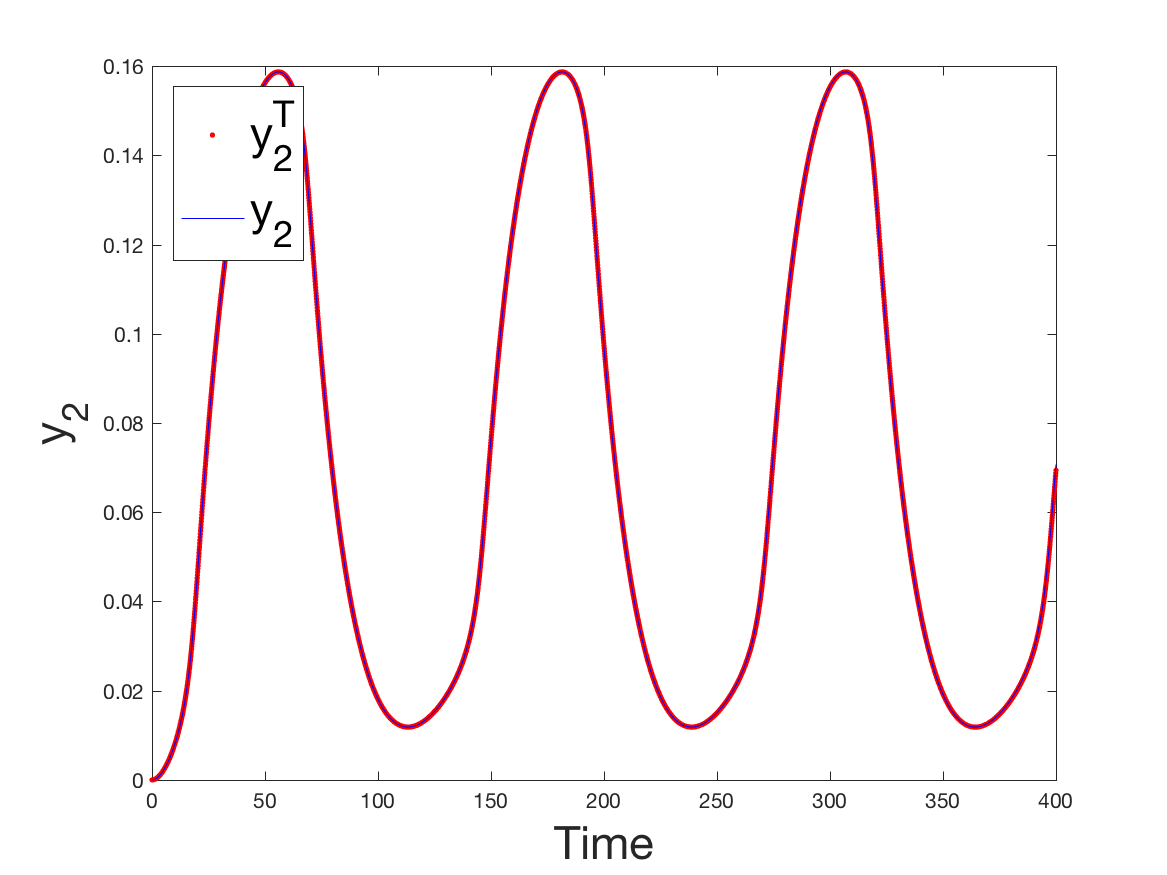}}&\rotatebox{-90}{Noise Free}\\
\raisebox{-.6\height}{\includegraphics[height=5cm,width=6cm]{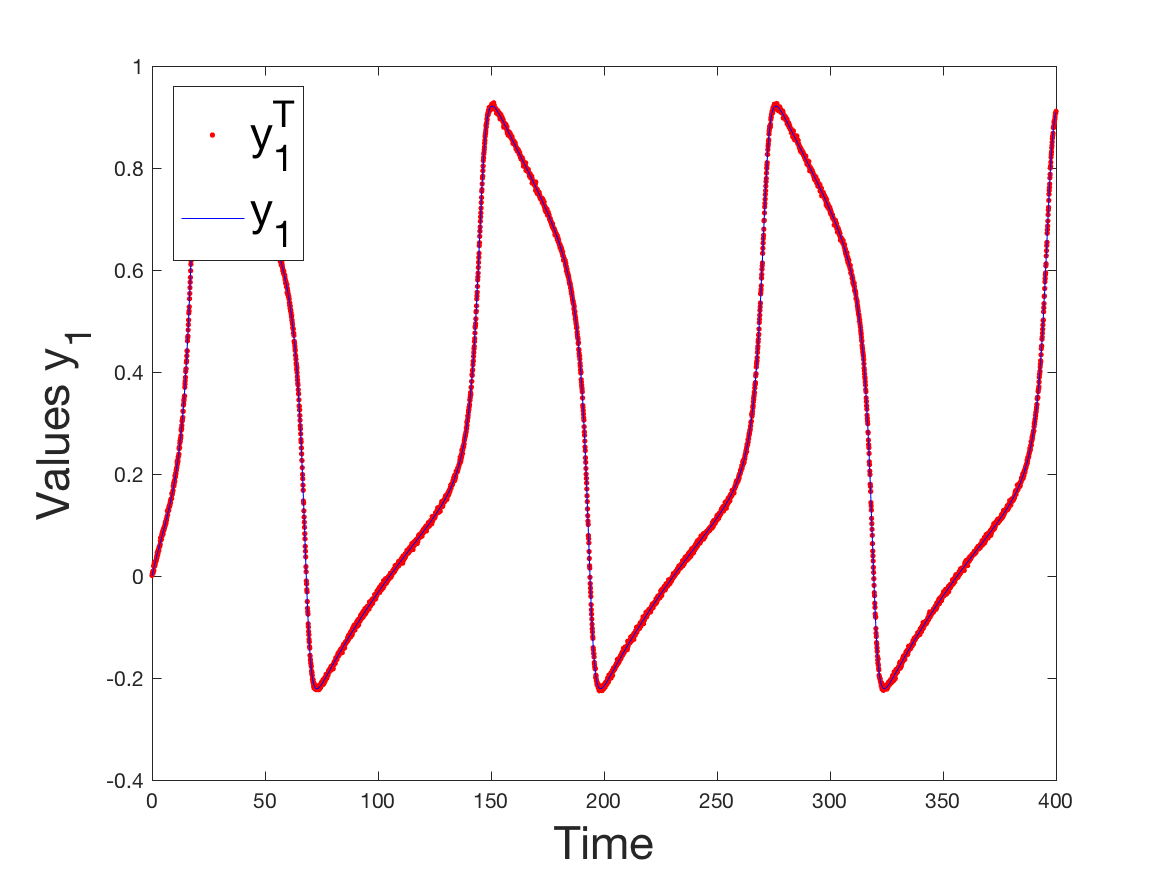}}&
\raisebox{-.6\height}{\includegraphics[height=5cm,width=6cm]{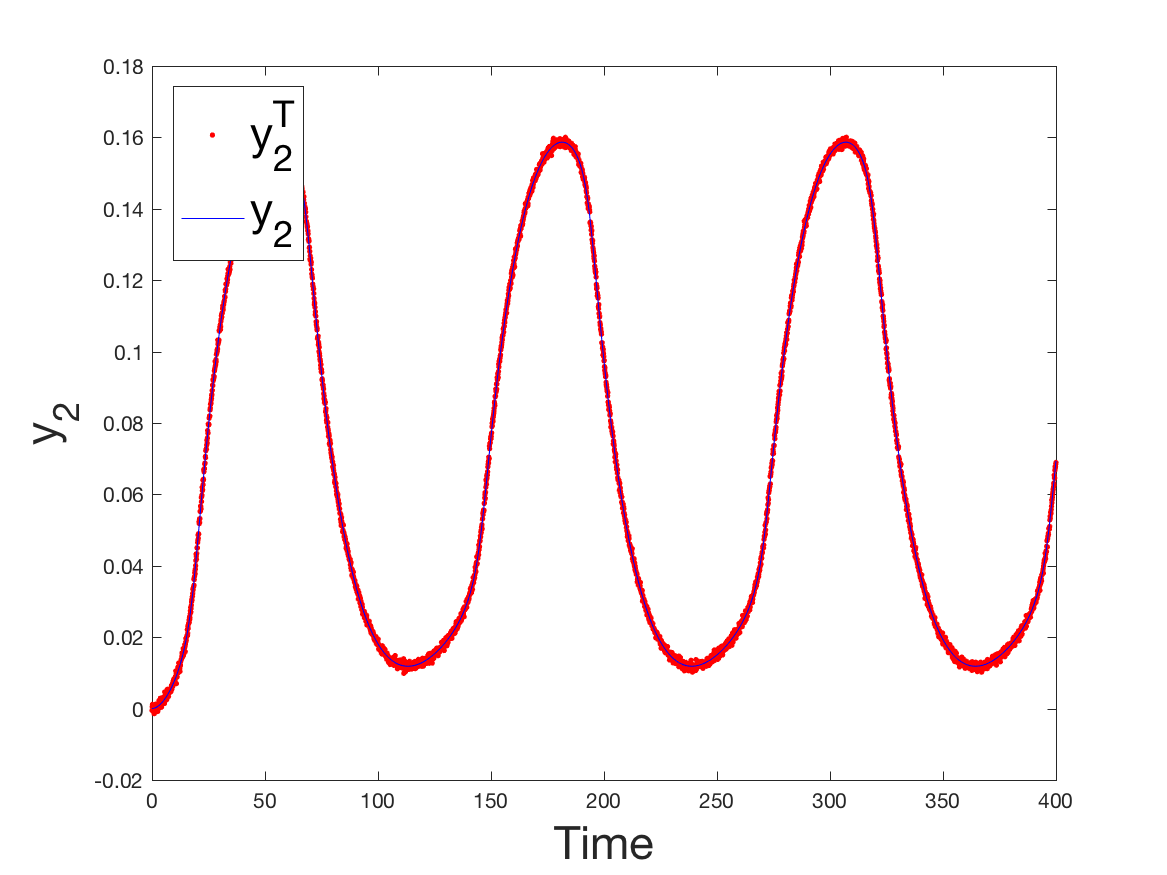}}&\rotatebox{-90}{1$\%$Noise}\\
\raisebox{-.6\height}{\includegraphics[height=5cm,width=6cm]{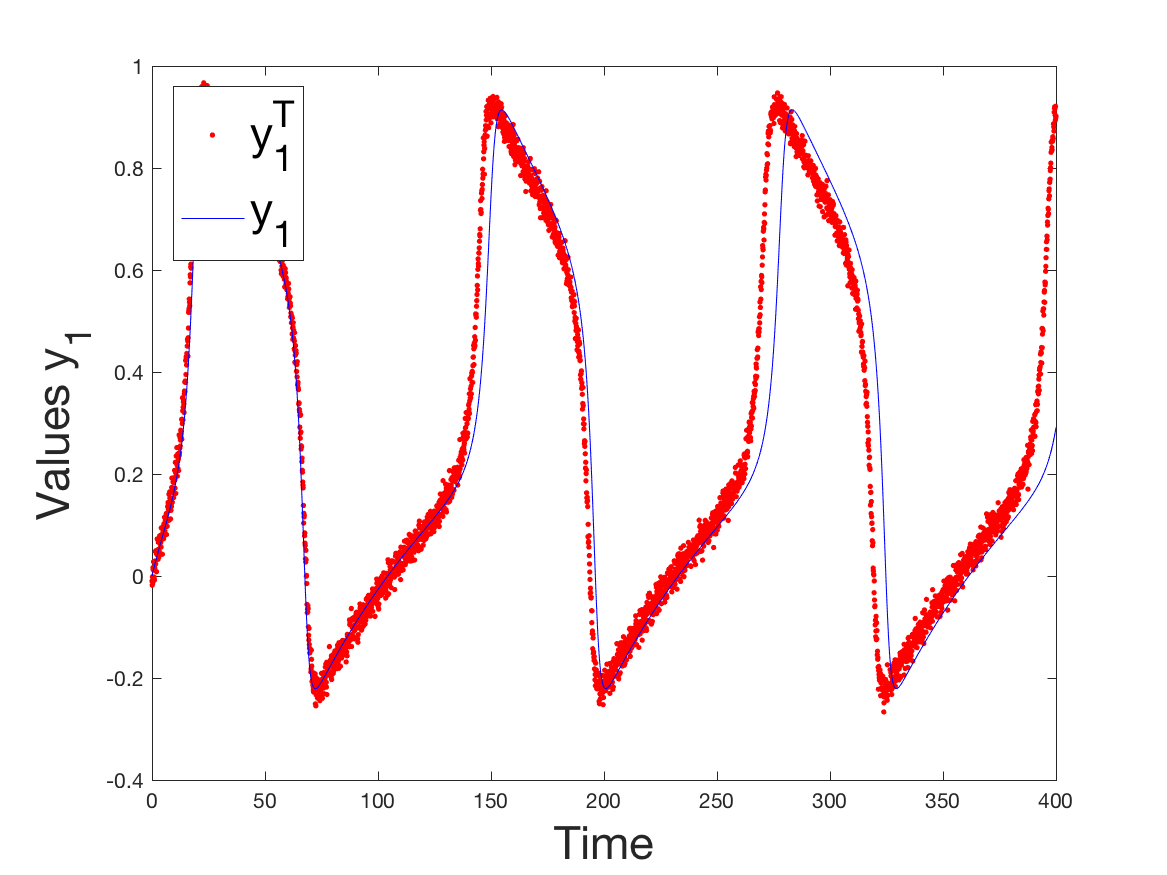}}&
\raisebox{-.6\height}{\includegraphics[height=5cm,width=6cm]{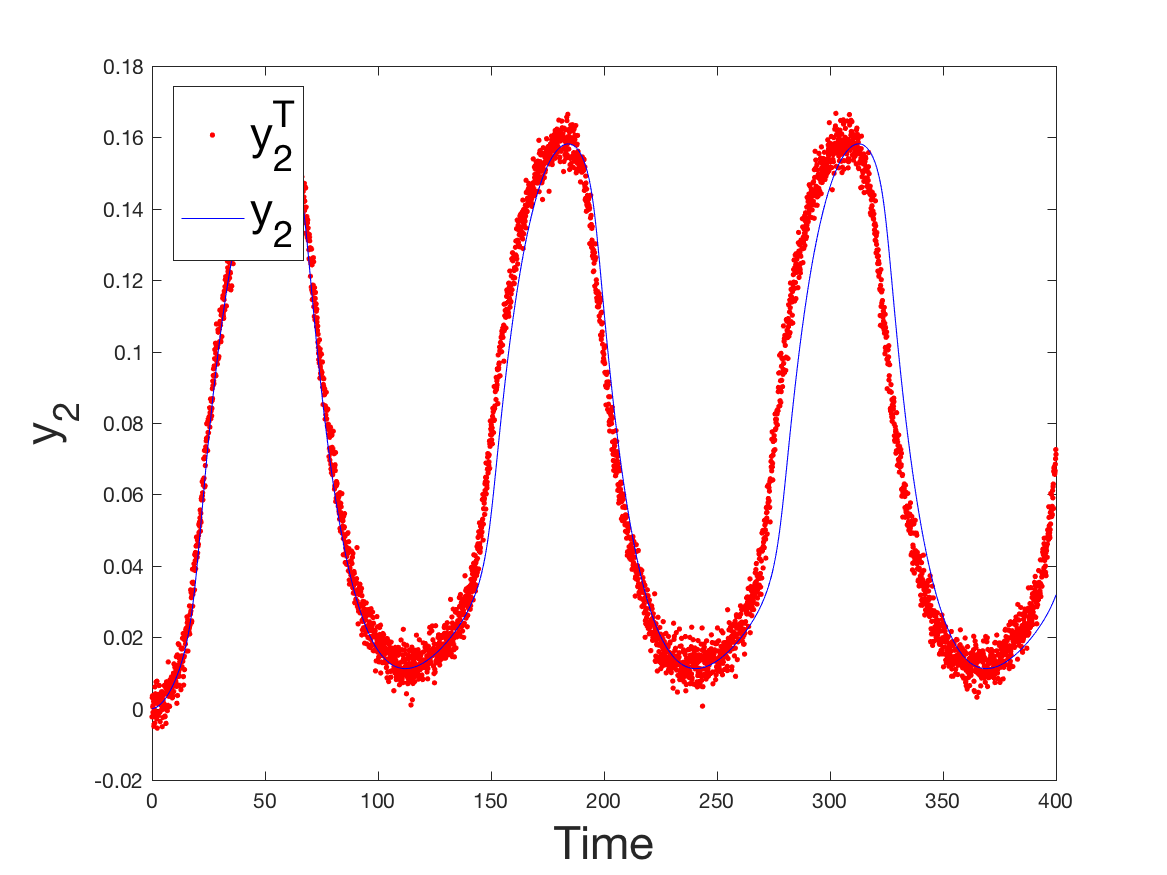}}&\rotatebox{-90}{5$\%$Noise}\\
\raisebox{-.6\height}{\includegraphics[height=5cm,width=6cm]{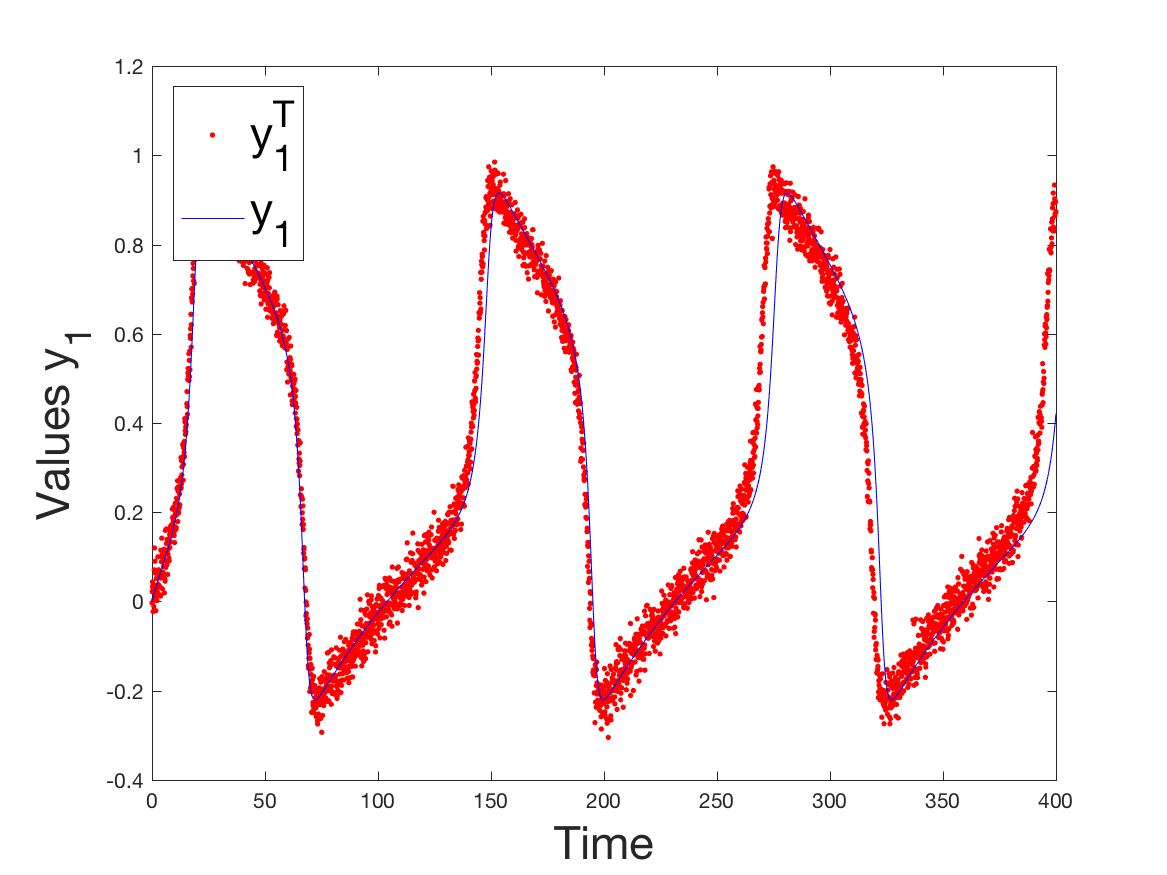}}&
\raisebox{-.6\height}{\includegraphics[height=5cm,width=6cm]{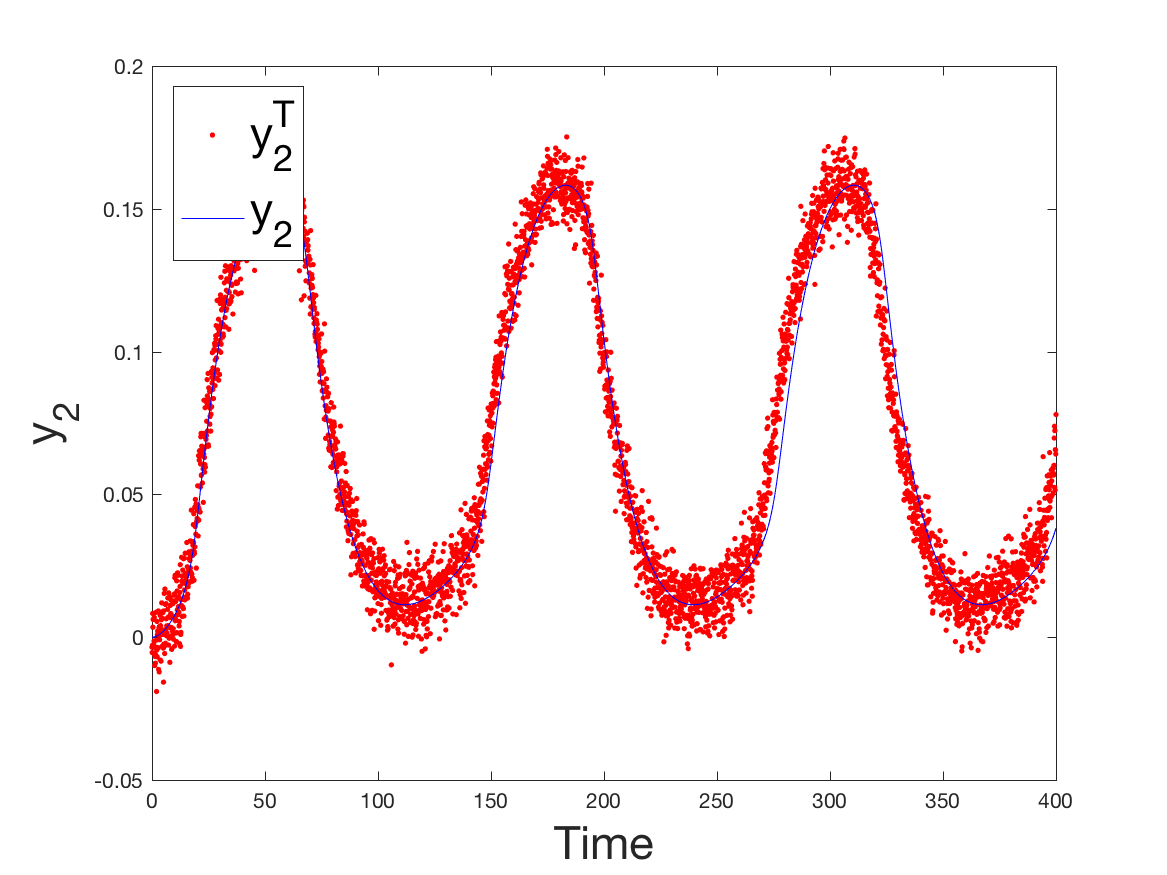}}&\rotatebox{-90}{10$\%$Noise}\\
\end{tabular}
\caption{Numerical experiments of the FitzHug-Nagumo Model fit with a noisy data (0\%,1\%,5\% and 10\% from top to bottom). Profile of the solution $\y_1$ (left) and $\y_2$ (right) with their respective targeted profile $\y_1^T$ and $\y_2^T$ respectively.}\label{FHN-plot}
\end{figure}

\begin{table}[!htpb]\begin{center}
\begin{tabular}{c|c|c|c|c}
\hline \text{Coef Variables} & $a$ &  $I_0$ &   $\epsilon$ & $d$\\ \hline
          \text{Actual Val.} & 0.1 & 2.0e-2 & 1.0e-02 & 4.0 \\
\text{Noise-free Val.} &         1.091342e-01 &     2.0522192e-02 &     1.035301e-02 &     4.007462e+00\\
1\% \text{Noise} 
&     1.00103e-01
&     2.00172e-02
&     9.99849e-03
&     4.00009e+00\\
5\% \text{Noise} 
     & 1.003527e-01
     & 1.912007e-02
     &1.023820e-02
     &4.000431e+00\\
10\% \text{Noise}  
&   1.009941e-01
&   1.937421e-02
&   1.023523e-02
&   4.000754e+00\\\hline
\end{tabular}\caption{Convergence of the parameter estimation algorithm toward the solution of FitzHug-Nagumo}\label{FHN-data}
\end{center}\end{table}

\subsection{Dynamical system with Matrix parametrized coefficients}

We shall consider a simple Coupled system of non-homogenous ODEs with variable coefficients. These coefficients are considered to be derived from a given nonlinear model. The resulting system to be solved reads
\begin{eqnarray*}
\dot{y}_{1} &=& a_{11}(c_{1},c_{2},c_{3}) y_{1} + a_{12}(c_{1},c_{2},c_{3}) y_{2}+\sin(t)\\
\dot{y}_{2} &=& a_{21}(c_{1},c_{2},c_{3}) y_{1} + a_{22}(c_{1},c_{2},c_{3}) y_{2}
\end{eqnarray*}
Where $a_{11},a_{12},a_{21}$ and $a_{22}$ are possibly nonlinear functions of the variables $c_{1},c_{2}$ and $c_{3}$.
The above system writes simply in a vector form 
\begin{equation}\label{linear1}
\dot\y  = A(c)\y + \f(t)
\end{equation}
where $\y=\left(y_1,y_2\right)^T$,  $c=(c_1,c_2,c_3)$ and 
\begin{equation}
\label{ExampleConstraint}
A(c)=
\begin{pmatrix}
a_{11} & a_{12}\\a_{21} & a_{22}\\
\end{pmatrix}
\end{equation}
In \eqref{ExampleConstraint}, we consider the following ``artificial'' model 
\begin{equation}
\begin{cases}
a_{11}(c) = c_{1}^2c_{2}\\
a_{12}(c) = c_{2}c_{3}\\
a_{21}(c) = \sin(c_3)\\
a_{22}(c) = c_{1}c_{3}^2
\end{cases}
\end{equation}
It follows that 
\begin{equation}\label{linear2}
\begin{cases}
a_{11}(c+h) = a_{11}(c) + c_1^2 h_2 + 2c_1h_1h_2 + h_1^2c_2\\
a_{12}(c+h) = a_{12}(c) + c_2h_3+h_2c_3  + h_2h_3\\
a_{21}(c+h) =  a_{21}(c) + h_3\cos(c_3) - \dfrac {h^2_3}{2}\sin(c_3) + o(h^3_3)\\
a_{22}(c+h) = a_{22}(c) +c_3^2 h_1 + 2c_3h_3h_1 + h_3^2c_1.
\end{cases}
\end{equation}
Henceforth the following equation holds
\begin{equation}
A(c+h) = A(c)  + L(c) h + R(c;h)
\end{equation}
with 
\begin{equation}L(c;h)=
\begin{pmatrix}
c_1^2h_2 & c_2h_3+h_2c_3\\
\cos(c_3)h_3 & h_1c_3^2\\
\end{pmatrix}
\end{equation}
This linear operator is Lipschitz as we have 
$$
\|L(c;v) - L(c;w) \| \leq \|v-w\|
$$
Indeed, 
\begin{eqnarray*}
\|L(c;v) - L(c;w) \|  &=& \left\|
\begin{pmatrix}
c_1^2(v_2-w_2)   & c_2(v_3-w_3) + c_3(v_2-w_2)\\
\cos(c_3)(v_3-w_3) & c_3^2(v_1-w_1)
\end{pmatrix}\right\| \\
&=& c_1^2(v_2-w_2)^2 + 
c_2^2(v_3-w_3)^2 + c_3^2(v_2-w_2)^2 \\&&+ 2c_2c_3(v_3-w_3) (v_2-w_2)  \\&&+
\cos(c_3)^2(v_3-w_3)^2 + c_3^4(v_1-w_1)^2\\
&\leq& c_1^2(v_2-w_2)^2 + 
c_2^2(v_3-w_3)^2 + c_3^2(v_2-w_2)^2 \\&&+ c_2^2c_3^2(v_3-w_3)^2 + (v_2-w_2)^2  \\&&+
\cos(c_3)^2(v_3-w_3)^2 + c_3^4(v_1-w_1)^2\\
&\leq& \max\{c_1^2,c_2^2,c_3^2,c_3^4,\cos(c_3).c_2^2c_3^2,1\} \|v-w\|
\end{eqnarray*}
and 
\begin{equation}R(c;h)=
\begin{pmatrix}
2c_1h_1h_2 + h_1^2c_2 & h_2h_3\\
- \dfrac {h^2_3}{2}\sin(c_3) + o(h^3_3) & 2c_3h_3h_1 + h_3^2c_1.\\
\end{pmatrix}
\end{equation}

$$\g(c) = \alpha c + \ell(c)$$
with 
$$\ell(c)=\begin{pmatrix}
c_3^2\int_{t_i}^{t_f}y_2p_2\dt\\
c_1^2\int_{t_i}^{t_f}y_1p_1+c_3\int_{t_i}^{t_f}y_2p_1\dt\\
c_2\int_{t_i}^{t_f}y_2p_1\dt+\cos(c_3)\int_{t_i}^{t_f}y_1p_1\dt
\end{pmatrix}$$

\begin{figure}[htbp]
\begin{tabular}{ccc}
\raisebox{-.6\height}{\includegraphics[height=5cm,width=6cm]{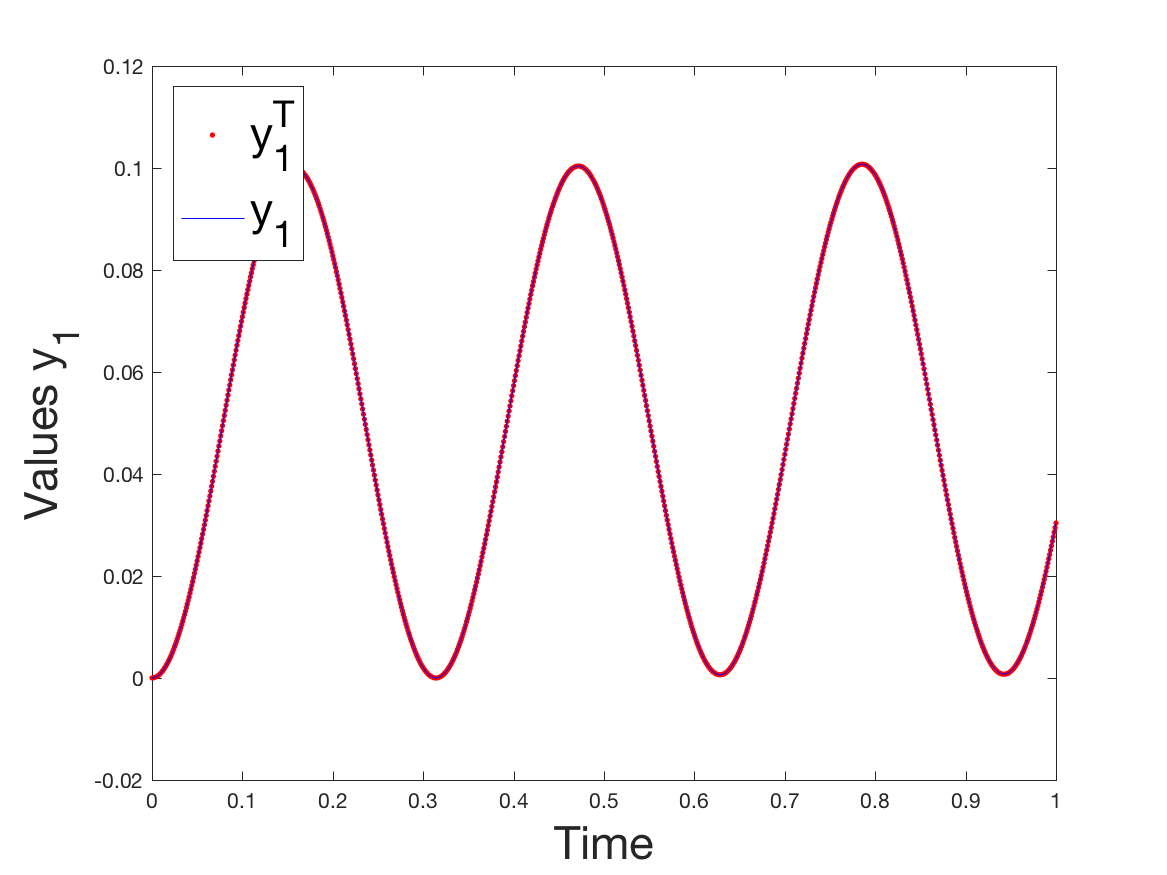}}&
\raisebox{-.6\height}{\includegraphics[height=5cm,width=6cm]{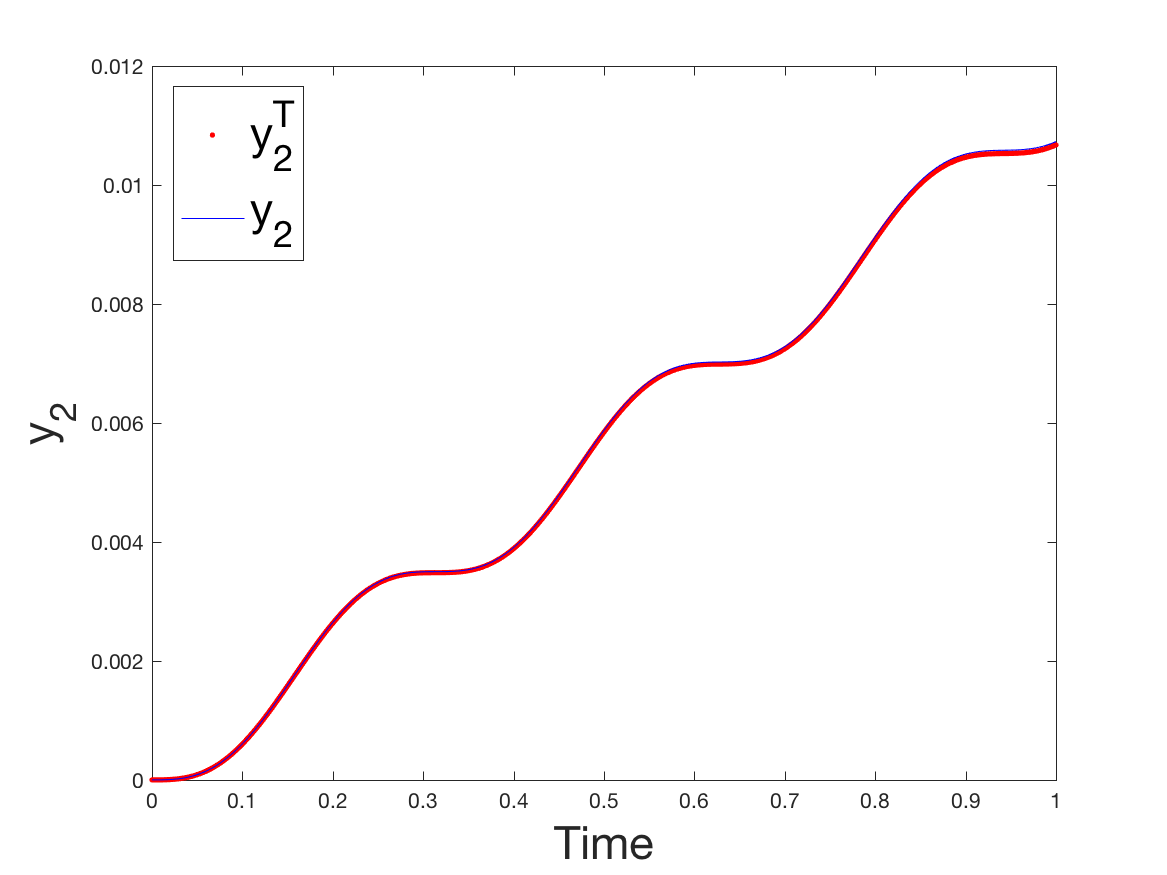}}&\rotatebox{-90}{Noise Free}\\
\raisebox{-.6\height}{\includegraphics[height=5cm,width=6cm]{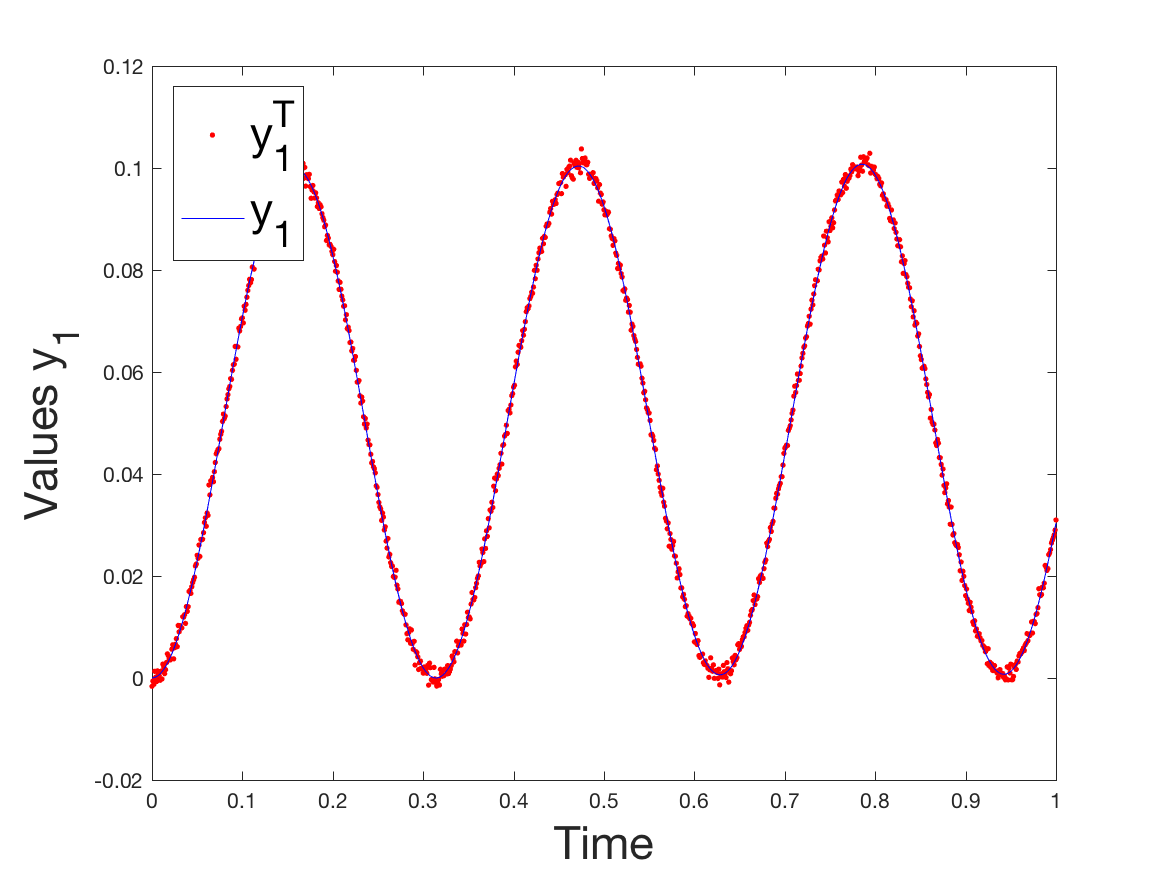}}&
\raisebox{-.6\height}{\includegraphics[height=5cm,width=6cm]{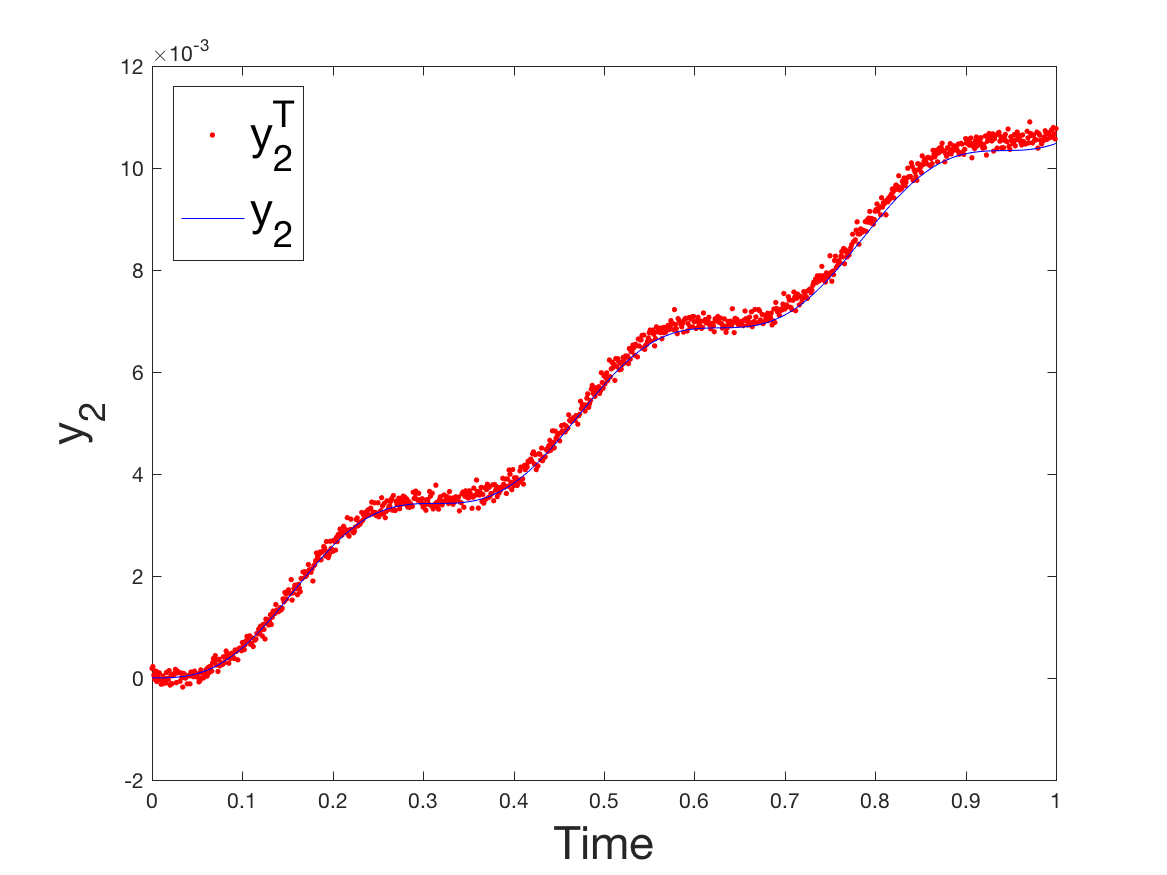}}&\rotatebox{-90}{1$\%$Noise}\\
\raisebox{-.6\height}{\includegraphics[height=5cm,width=6cm]{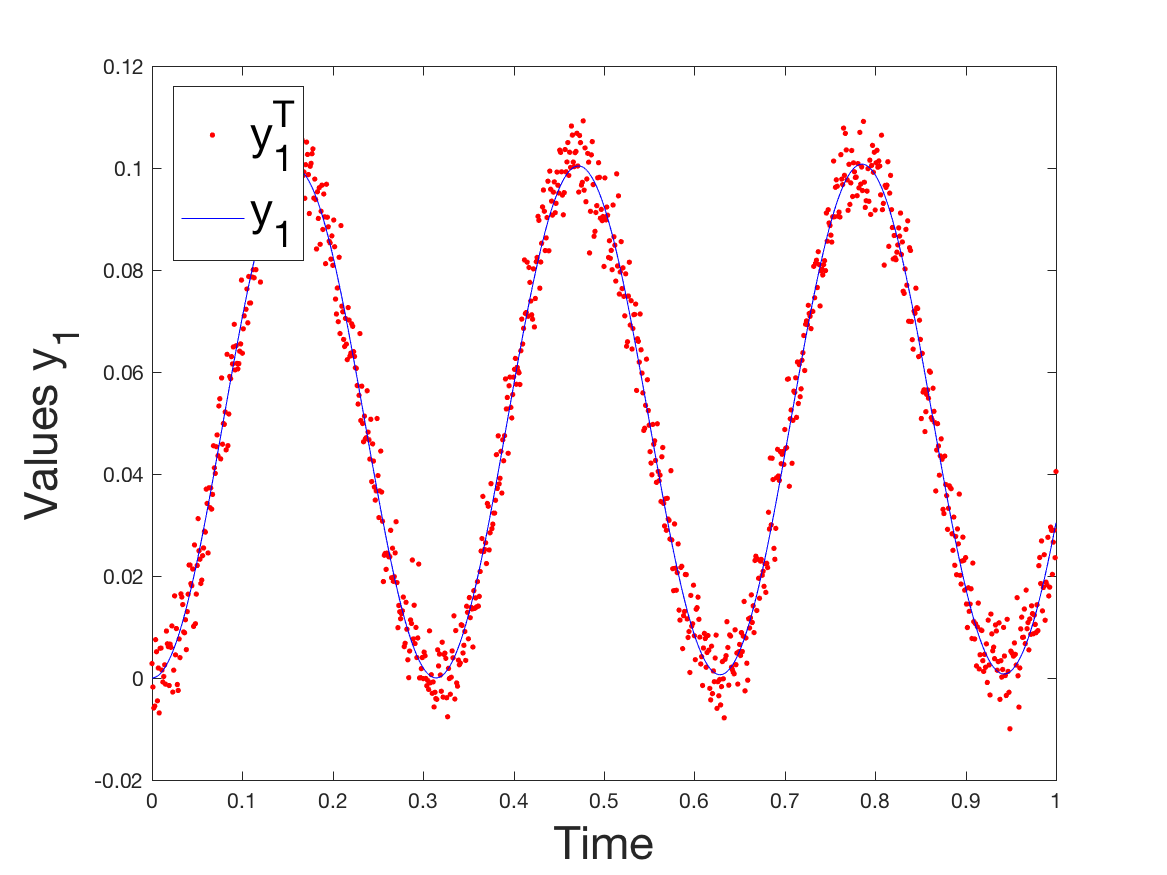}}&
\raisebox{-.6\height}{\includegraphics[height=5cm,width=6cm]{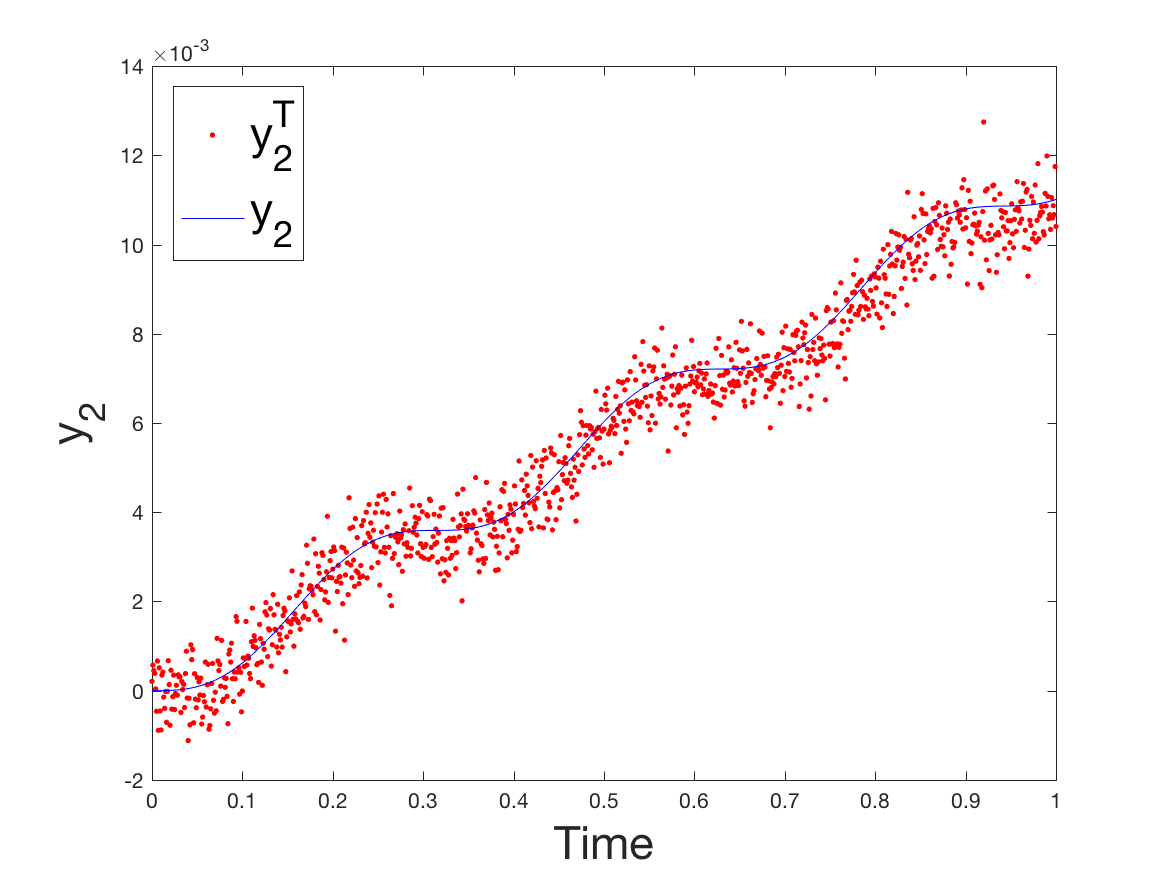}}&\rotatebox{-90}{5$\%$Noise}\\
\raisebox{-.6\height}{\includegraphics[height=5cm,width=6cm]{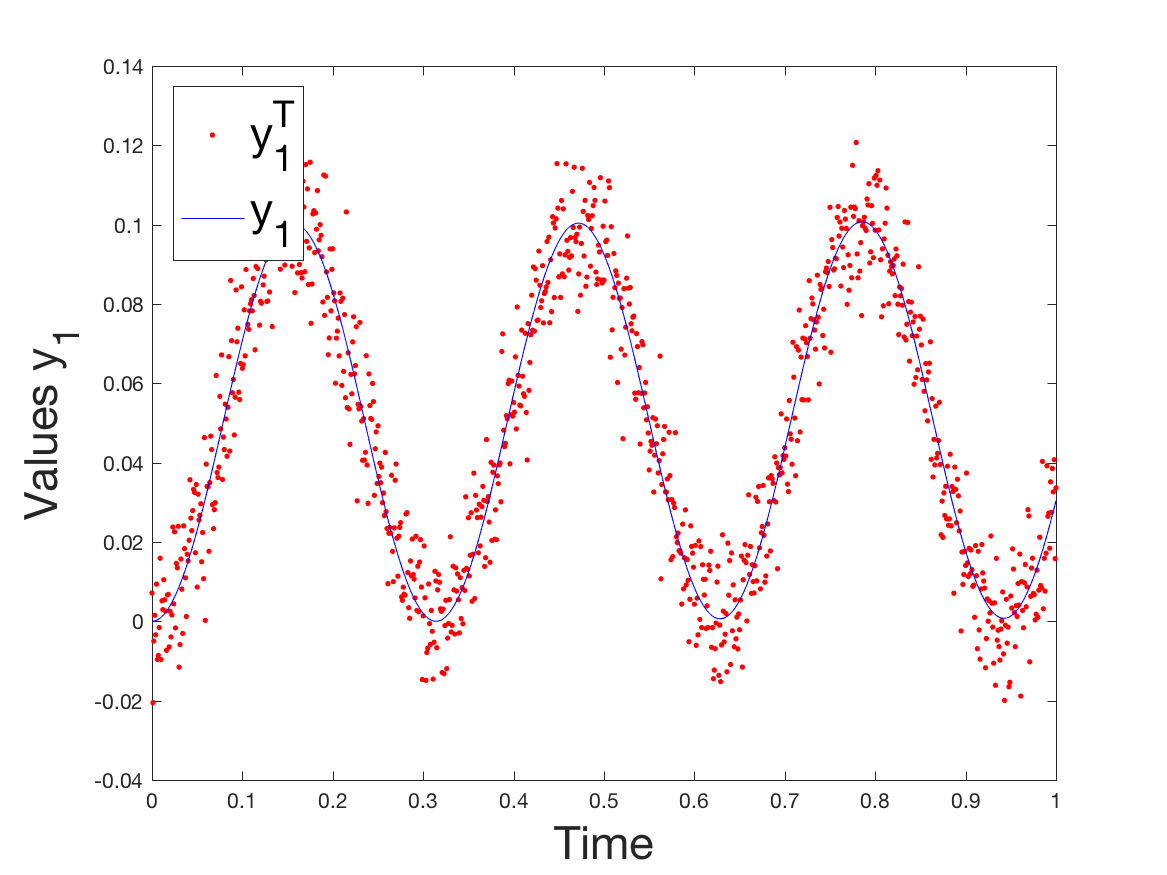}}&
\raisebox{-.6\height}{\includegraphics[height=5cm,width=6cm]{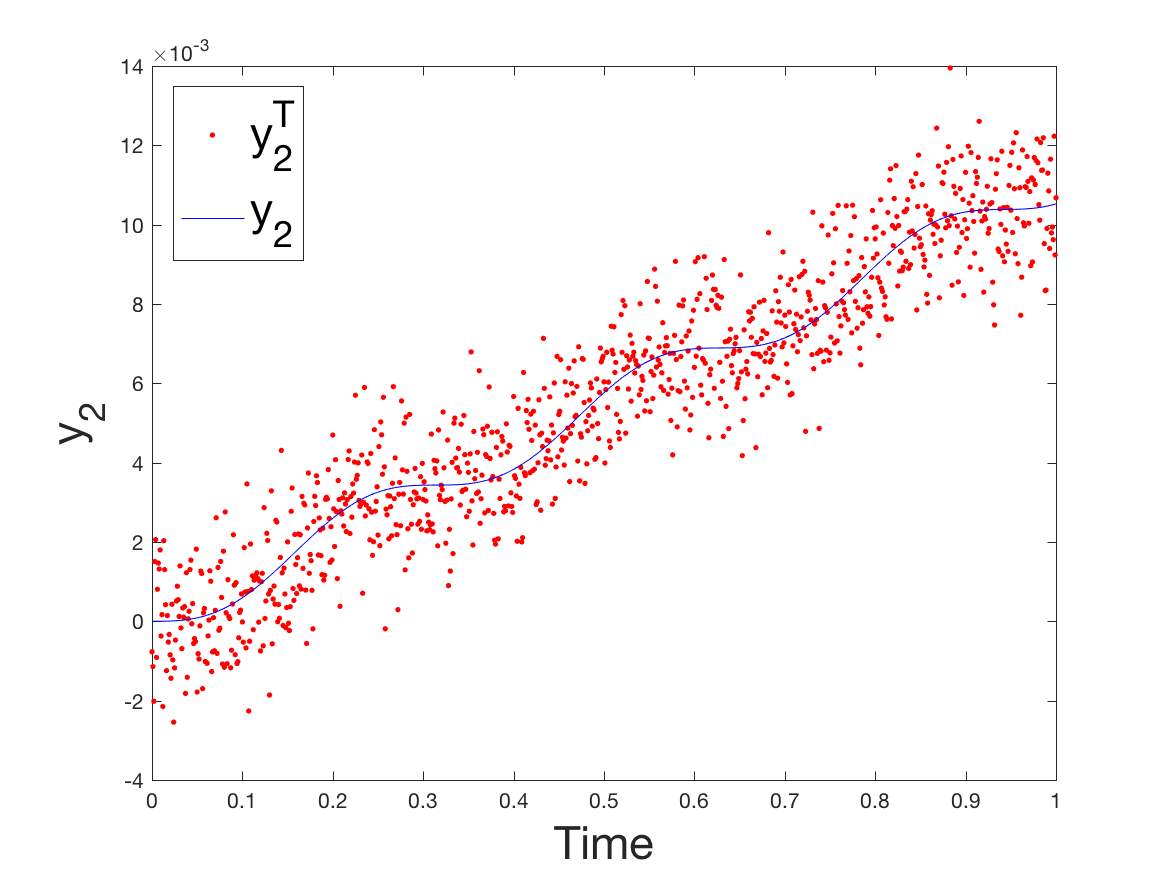}}&\rotatebox{-90}{10$\%$Noise}\\
\end{tabular}
\caption{Numerical experiments of the Linear model \eqref{linear1}-\eqref{linear2} Model fit with a noisy data (0\%,1\%,5\% and 10\% from top to bottom). Profile of the solution $\y_1$ (left) and $\y_2$ (right) with their respective targeted profile $\y_1^T$ and $\y_2^T$ respectively.}\label{LIN-plot}
\end{figure}

\begin{table}\begin{center}
\begin{tabular}{c|c|c|c}
\hline \text{Coef Variables} & $c_1$ &  $c_2$ &   $c_3$ \\ \hline
          \text{Actual Val.} & 1.190e-02 &3.520e-02 &2.200e-02 \\
\text{Noise-free Val.} &      1.186352e-01&        3.509264e-01&       2.224448e-01\\
1\% \text{Noise} &  1.197824e-01&     3.537101e-01&         2.189869e-01 \\
5\% \text{Noise} &      1.211243e-01&     3.523663e-01&     2.300441e-01 \\
10\% \text{Noise} &     1.250440e-01&    3.516971e-01&     2.197817e-01 \\\hline
\end{tabular}\caption{Convergence of the parameter estimation algorithm toward the solution given for the Linear model.}\label{LIN-data}
\end{center}\end{table}
The parameter estimations results for the linear model with variable matrix coefficients are reported in Table~\ref{LIN-data}, where the fit to the data is depicted in Fig.~\ref{LIN-plot}. 
\subsection{Second order parameter identification problem: Van Der Pol model}

Here, we consider a general second order ordinary differential equation as follow
\begin{equation}\label{2ndOrder}
 \ddot{\v} + \gamma(t)\dot{\v} + \lambda_\text{max}(t)\v= \f(t)
 \end{equation}
 Could be transformed to 
 \begin{equation}
\dot{ \begin{pmatrix}
 \v \\ \w
 \end{pmatrix}}+
 \begin{pmatrix}
 -\gamma(t){\lambda_\text{max}}^\prime(t) & \gamma(t)\lambda_\text{max}(t)\\
 -1/\gamma(t) & 1
 \end{pmatrix}
\begin{pmatrix}
 \v \\ \w
 \end{pmatrix}
 =
 \begin{pmatrix}
 \f(t)\gamma(t) \\ \f(t)
 \end{pmatrix}.
 \end{equation}
 Now, we can proceed with the optimization as described above. The example we are considering for the numerical illustration is the Van der Pol equation, which is known to model a non-conservative oscillator with non-liner damping. The Dynamics of this model are described as follow
\begin{equation}\label{venderpol}
 \ddot{y}(t) - \mu[1-y(t)^2] \dot{y}(t) + y(t)=0
 \end{equation}
The above dynamics can also be written as 
\begin{equation}\begin{array}{ccc}
\dot{v}(t)  &=& w(t)\\
\dot{w}(t)  &=& \mu[1-v(t)]w(t) -v(t).
\end{array}\end{equation}

The equation that governs the adjoint state variable writes 
\begin{equation}
\begin{array}{ccc}
-\dot{p}(t) = -(2\mu v(t) w(t) +1)q(t) \\
-\dot{q}(t) = p(t) + \mu(1-v^2(t))q(t)  
\end{array}
\end{equation}
and the gradient writes
\begin{equation}
\nabla \J(\mu) = \alpha \mu + (1-v^2(t))w(t) 
\end{equation}

Numerical results for the Vander Pol parameter identification is reported in Table~\ref{VDP-data} and the fit results toward the target solution is depicted in Fig.~\ref{VDP-plot}.

\begin{figure}[!htbp]
\begin{tabular}{ccc}
\raisebox{-.6\height}{\includegraphics[height=5cm,width=6cm]{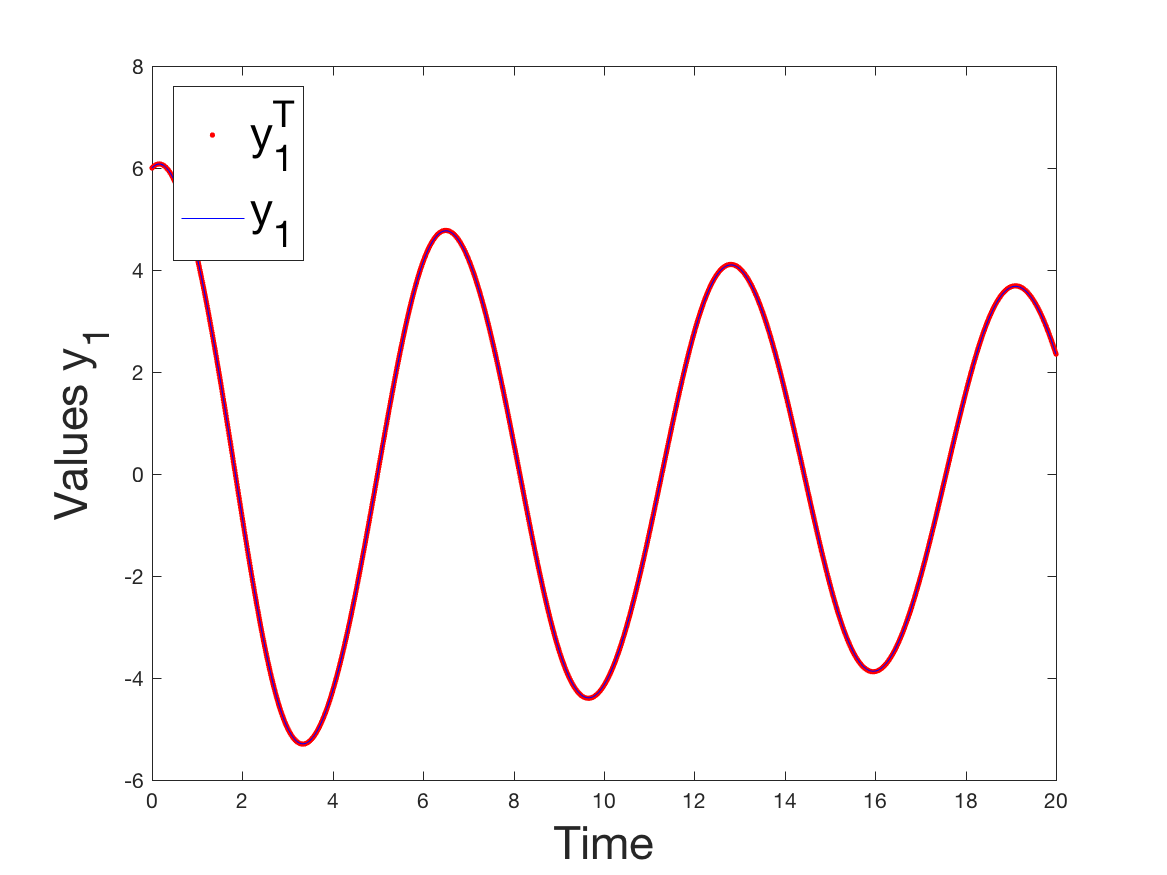}}&
\raisebox{-.6\height}{\includegraphics[height=5cm,width=6cm]{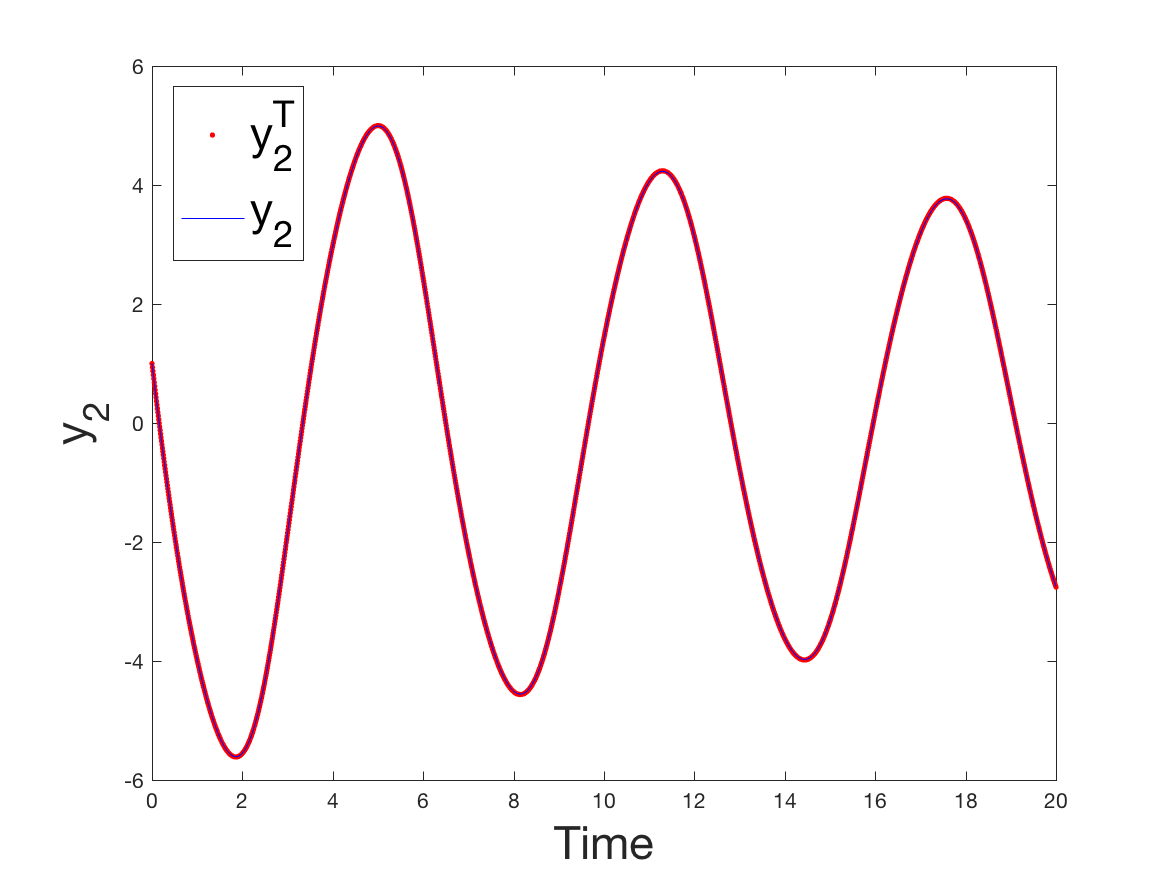}}&\rotatebox{-90}{Noise Free}\\
\raisebox{-.6\height}{\includegraphics[height=5cm,width=6cm]{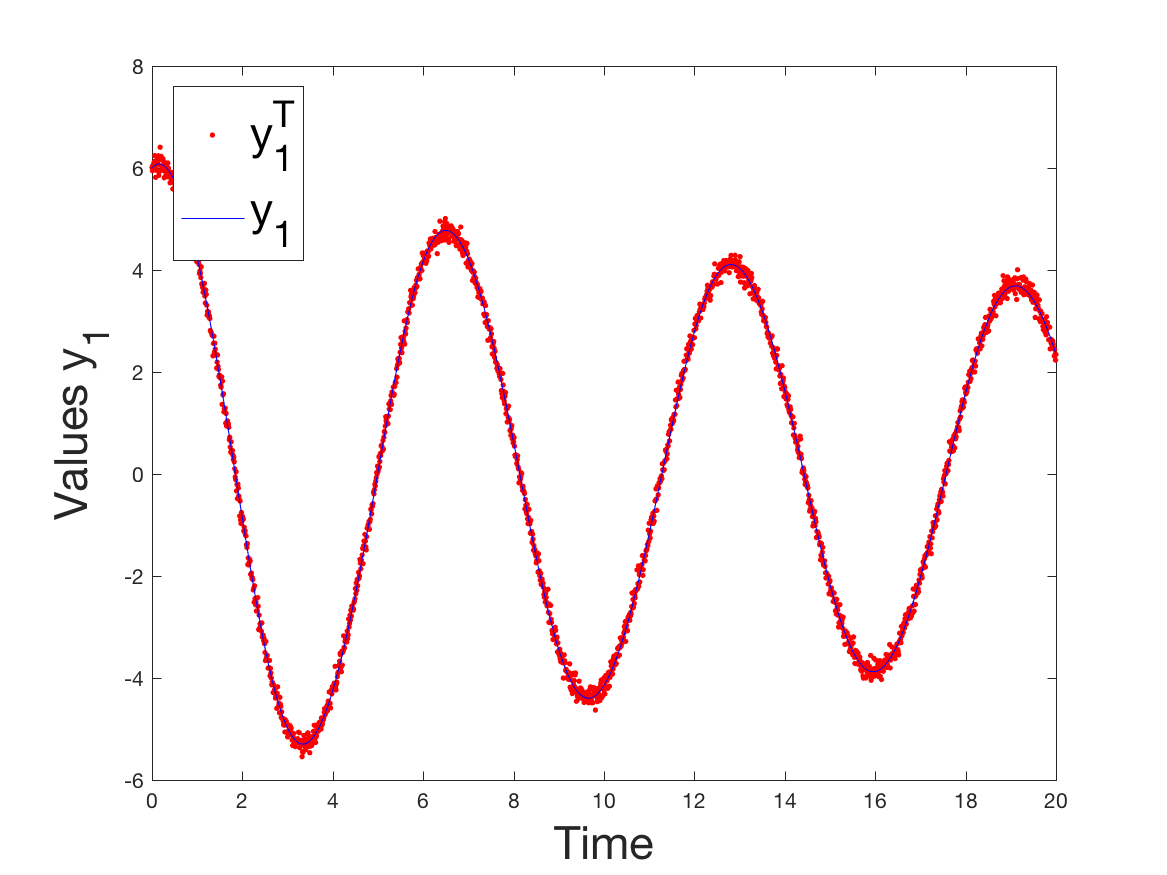}}&
\raisebox{-.6\height}{\includegraphics[height=5cm,width=6cm]{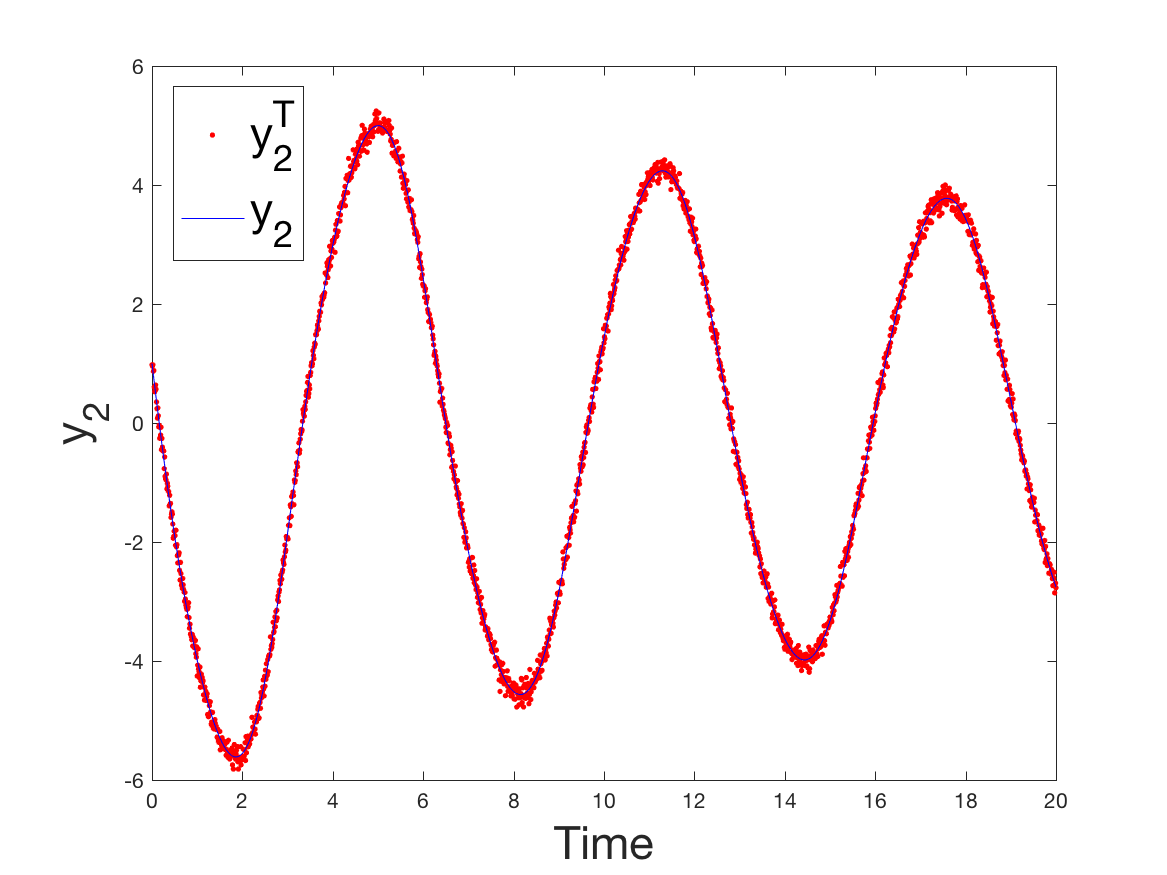}}&\rotatebox{-90}{1$\%$Noise}\\
\raisebox{-.6\height}{\includegraphics[height=5cm,width=6cm]{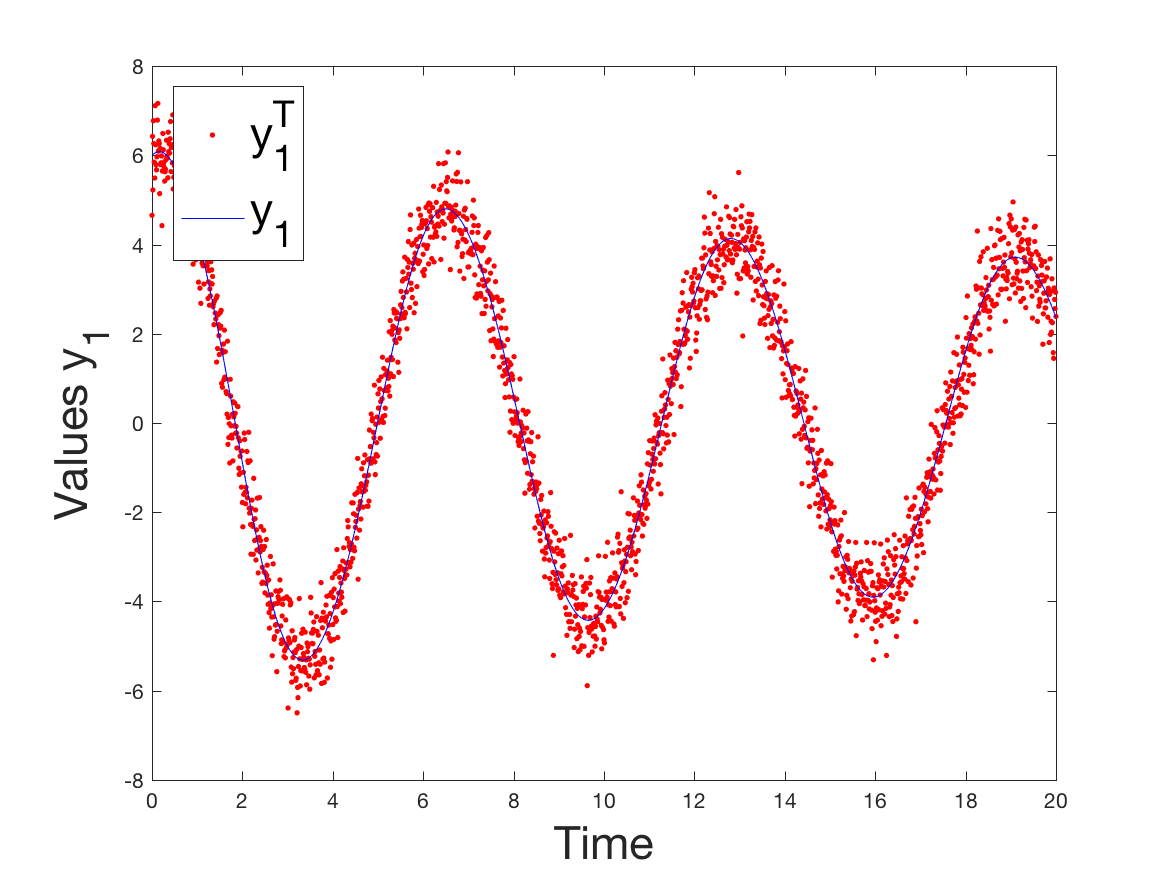}}&
\raisebox{-.6\height}{\includegraphics[height=5cm,width=6cm]{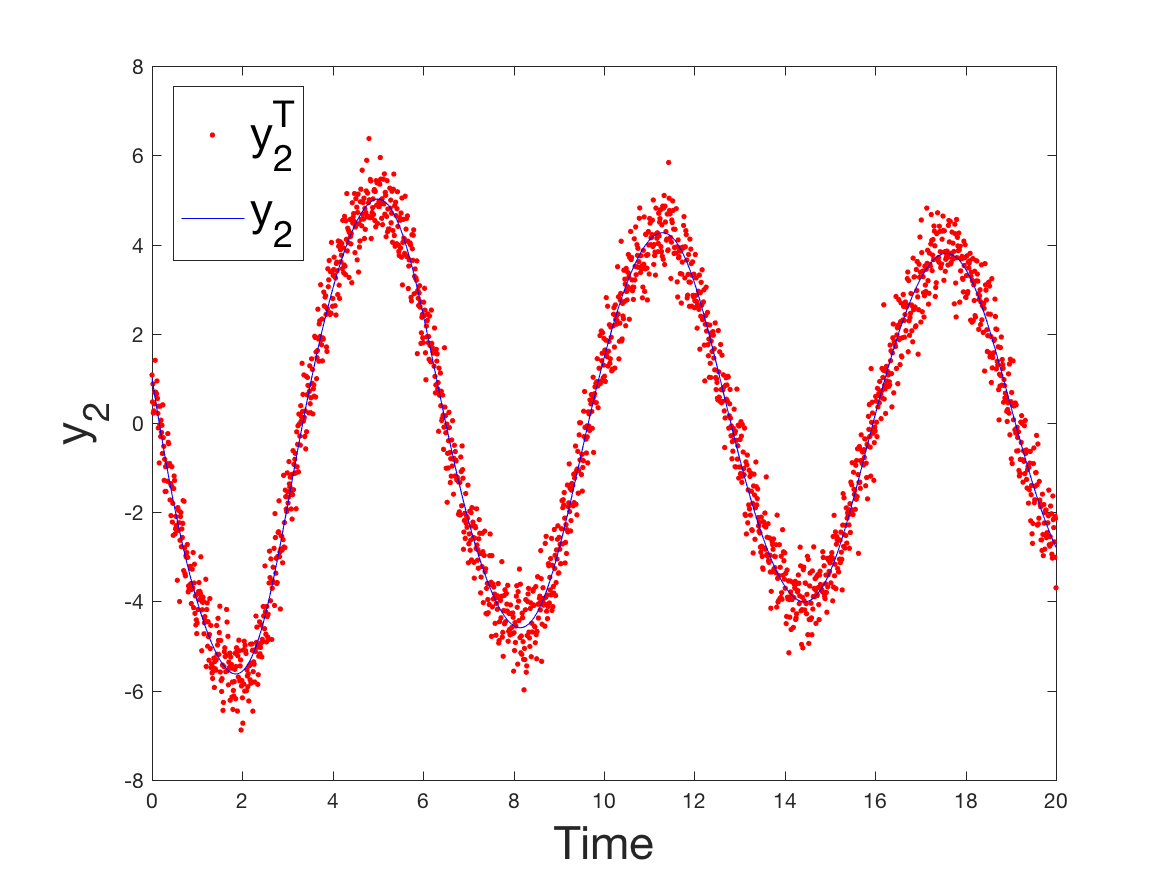}}&\rotatebox{-90}{5$\%$Noise}\\
\raisebox{-.6\height}{\includegraphics[height=5cm,width=6cm]{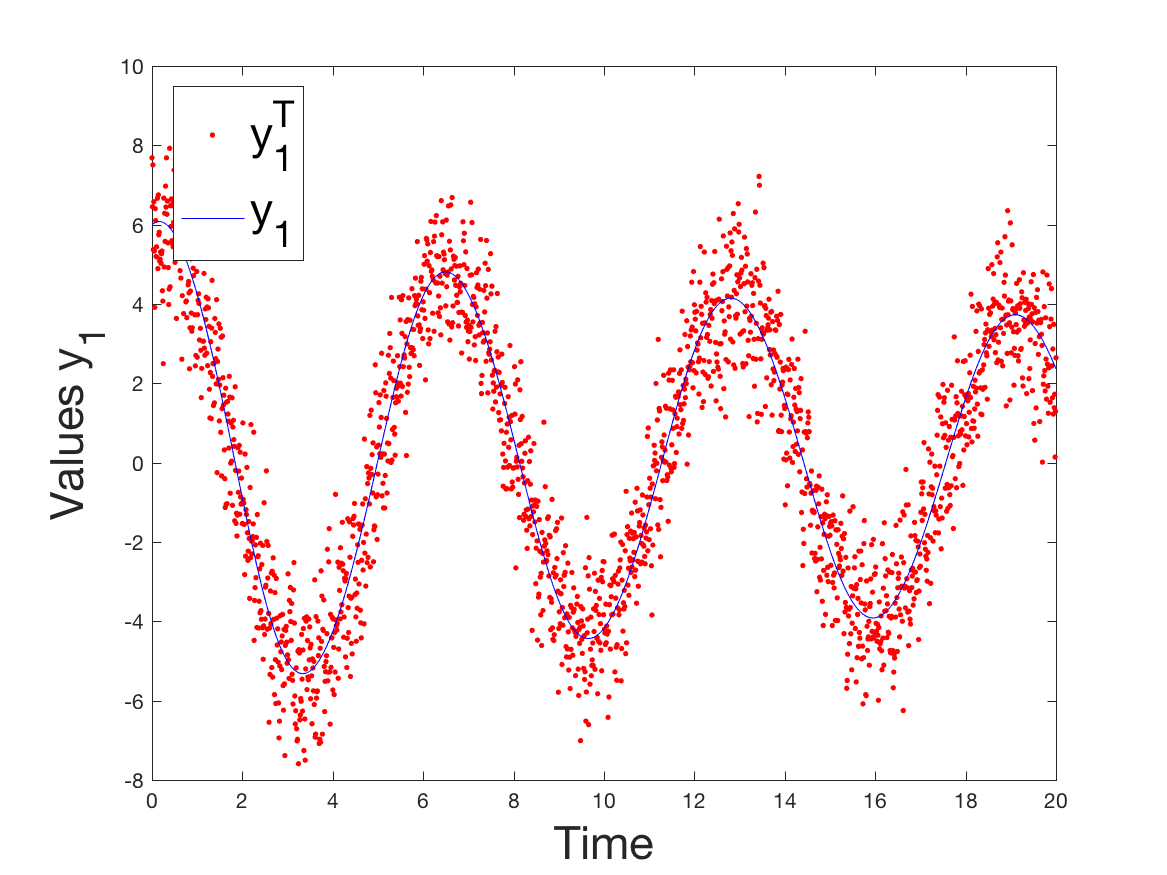}}&
\raisebox{-.6\height}{\includegraphics[height=5cm,width=6cm]{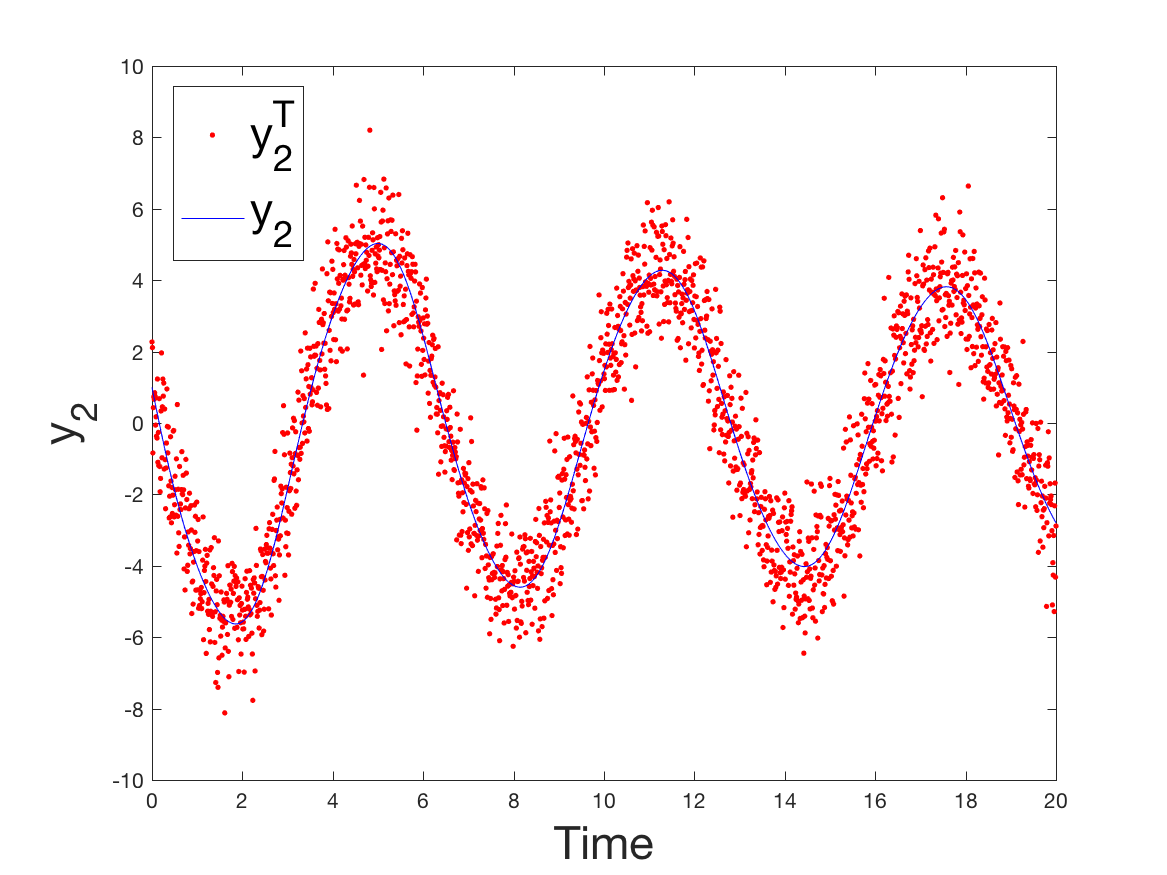}}&\rotatebox{-90}{10$\%$Noise}
\end{tabular}
\caption{Numerical experiments of the Vander Pol fit problem \eqref{venderpol} Model fit with a noisy data (0\%,1\%,5\% and 10\% from top to bottom). Profile of the solution $\y_1$ (left) and $\y_2$ (right) with their respective targeted profile $\y_1^T$ and $\y_2^T$ respectively.}\label{VDP-plot}
\end{figure}

\begin{table}[htbp]\begin{center}
\begin{tabular}{c|c}
\hline \text{Coef Variables} & $\mu$ \\ \hline   
 \text{Actual Val.} &  1.230e-02\\
\text{Noise-free Val.} & 1.239e-02 \\
1\% \text{Noise}  & 1.230e-02  \\
5\% \text{Noise}  &   1.199e-02\\
10\% \text{Noise}  & 1.185e-02 \\\hline
\end{tabular}\caption{Convergence of the parameter estimation algorithm toward the solution}\label{VDP-data}
\end{center}\end{table}

\subsection{Competing Species Model}

	In population dynamics modeling, Competing Species model, involves two interacting populations in some closed environment. We consider here, a two similar species competing for a limited food supply, without preying upon each other. 

\begin{equation}\label{CSM}\begin{array}{ccc}
\dot{v} &=& v\left( \zeta_{1} - \eta_{1} v - \theta_{1} w \right)\\
\dot{w} &=& w\left( \zeta_{2} - \eta_{2} w - \theta_{2}v \right)
\end{array}\end{equation}

where, $\zeta_{1}, \eta_{1}, \theta_{1}, \zeta_{2}, \eta_{2}$ and  $\theta_{2}$ are positive parameters to be identified in our  estimation problem. 

The optimality condition system writes as follows: In addition to state variables equations \eqref{CSM}, we have the adjoint state variables equations that read

\begin{equation}\label{AdjointCSM}
\begin{array}{ccc}
- \dot{p}(t) &=& \left( \xi_{1}-2\eta_{1} v(t) -\theta_{1} w \right) p(t)   - \theta_{2} w q(t) + v(t)-v^T(t)\\
- \dot{q}(t) &=& \left( \xi_{2}-2\eta_{2} v(t) -\theta_{2} w \right) q(t)   - \theta_{1} w p(t)+ v(t)-v^T(t).
\end{array} 
\end{equation}
supplemented with the gradient equation which writes 

\begin{equation*}
\g(\xi,\eta,\theta)=\alpha\begin{pmatrix}\xi_{1}\\ \xi_{2}\\ \eta_{1}\\ \eta_{2} \\ \theta_{1}\\\theta_{2}\end{pmatrix}
+ \begin{pmatrix}vp\\wq\\v^2p\\w^2q\\vwp\\wvq\end{pmatrix}
\end{equation*}

Numerical experiments related to the parameter identification of the Competing Species Model are reported in Table~\ref{CSM-data}. The fitting results are depicted in Fig.~\ref{CSM-plot}, which demonstrate convergence toward the target solution in the presence of noise. 

\begin{figure}[!htbp]
\begin{tabular}{ccc}
\raisebox{-.6\height}{\includegraphics[height=5cm,width=6cm]{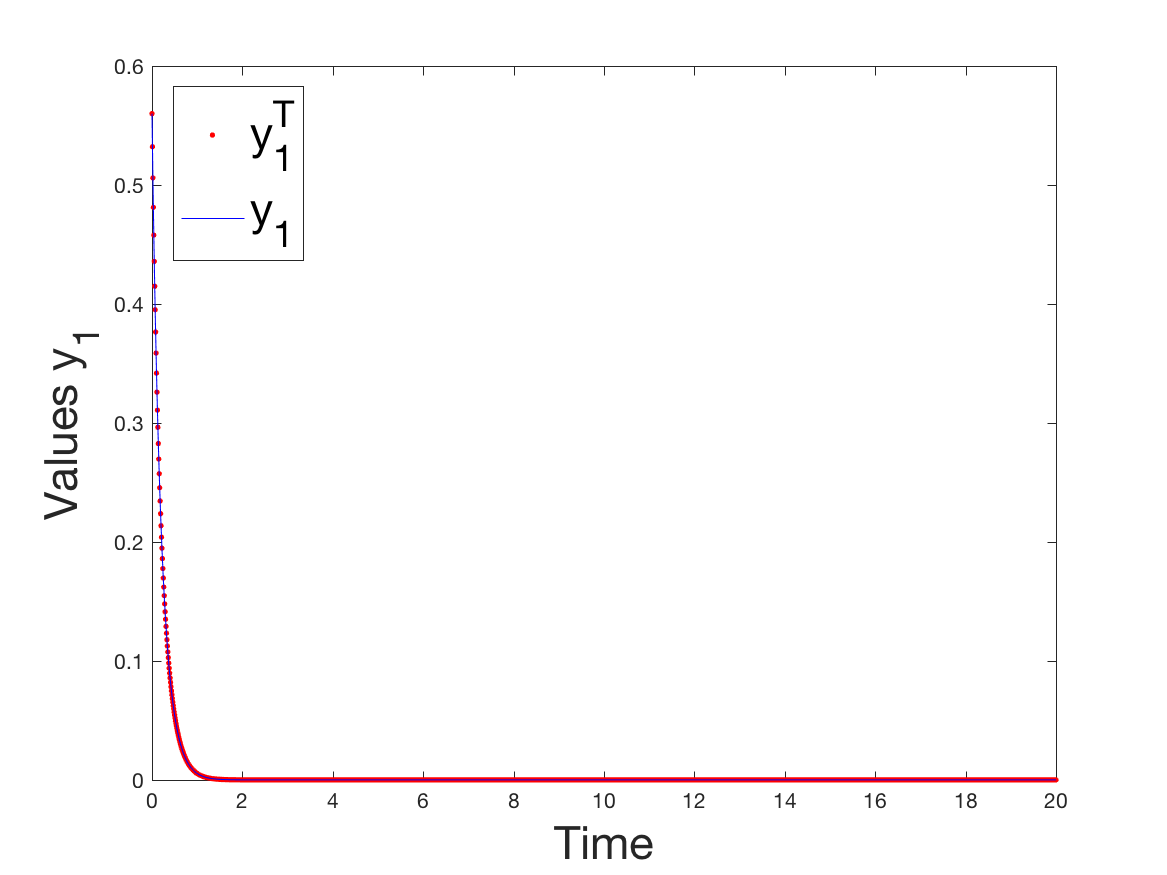}}&
\raisebox{-.6\height}{\includegraphics[height=5cm,width=6cm]{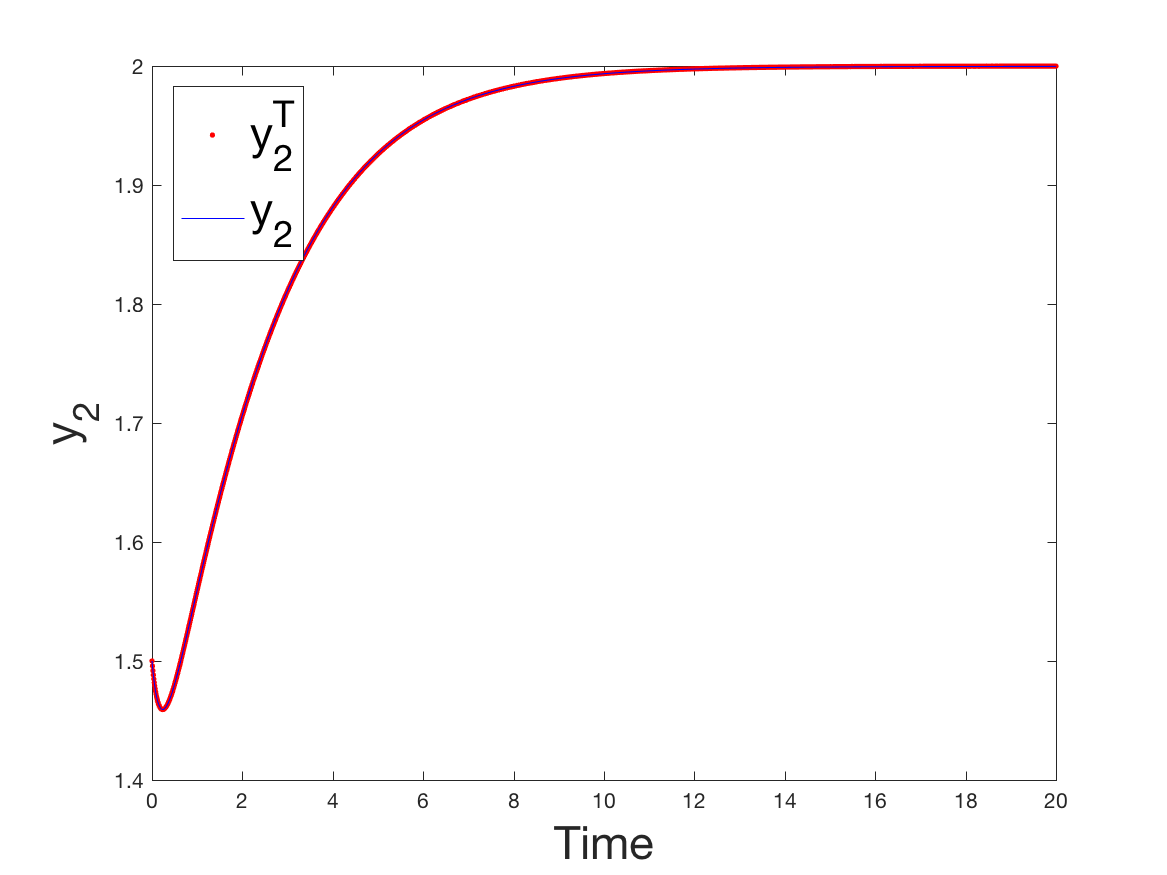}}&\rotatebox{-90}{Noise Free}\\
\raisebox{-.6\height}{\includegraphics[height=5cm,width=6cm]{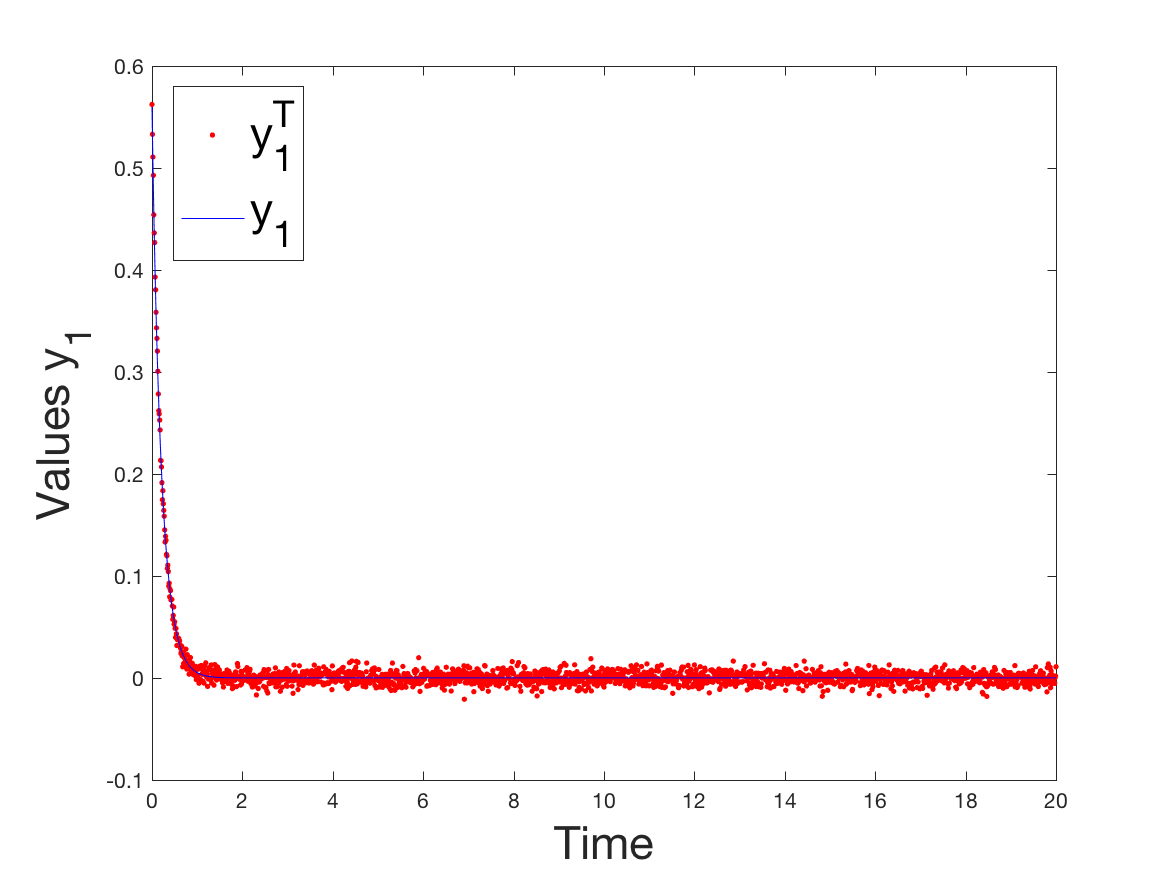}}&
\raisebox{-.6\height}{\includegraphics[height=5cm,width=6cm]{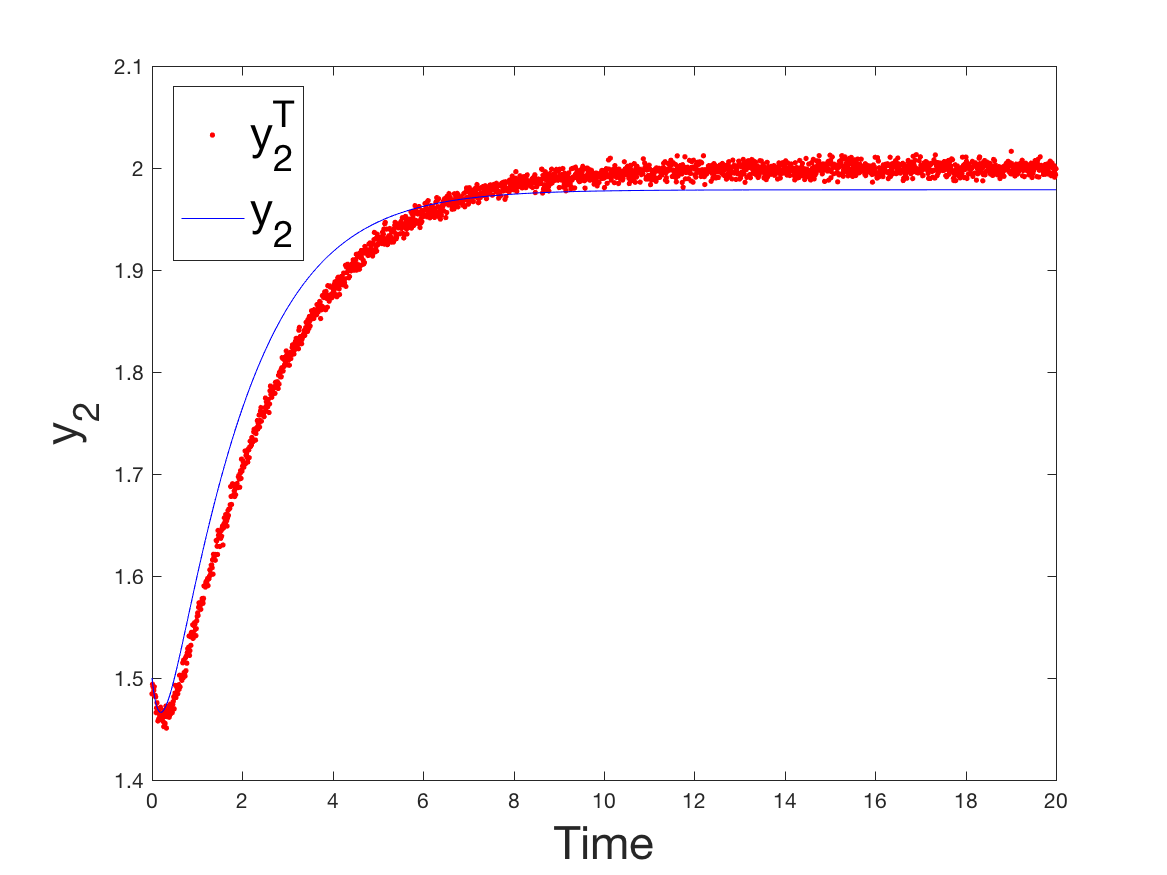}}&\rotatebox{-90}{1$\%$Noise}\\
\raisebox{-.6\height}{\includegraphics[height=5cm,width=6cm]{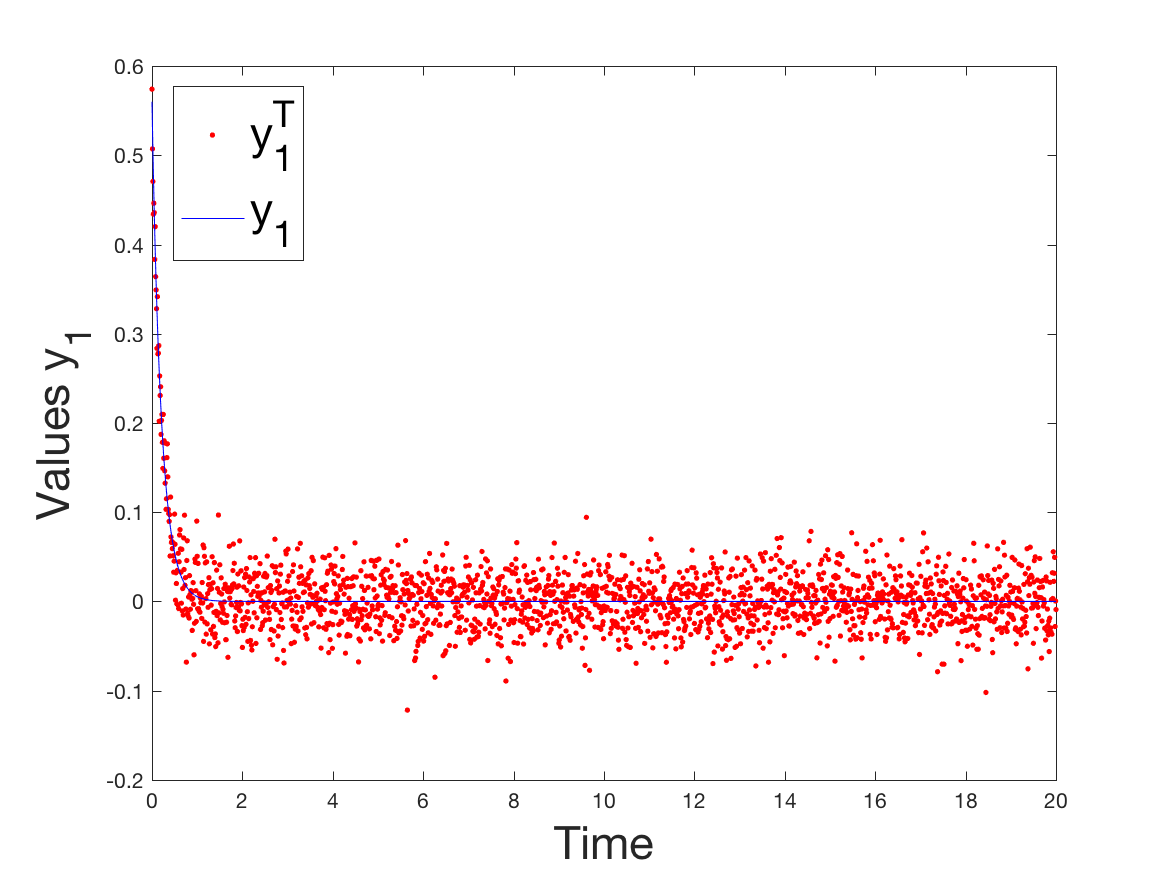}}&
\raisebox{-.6\height}{\includegraphics[height=5cm,width=6cm]{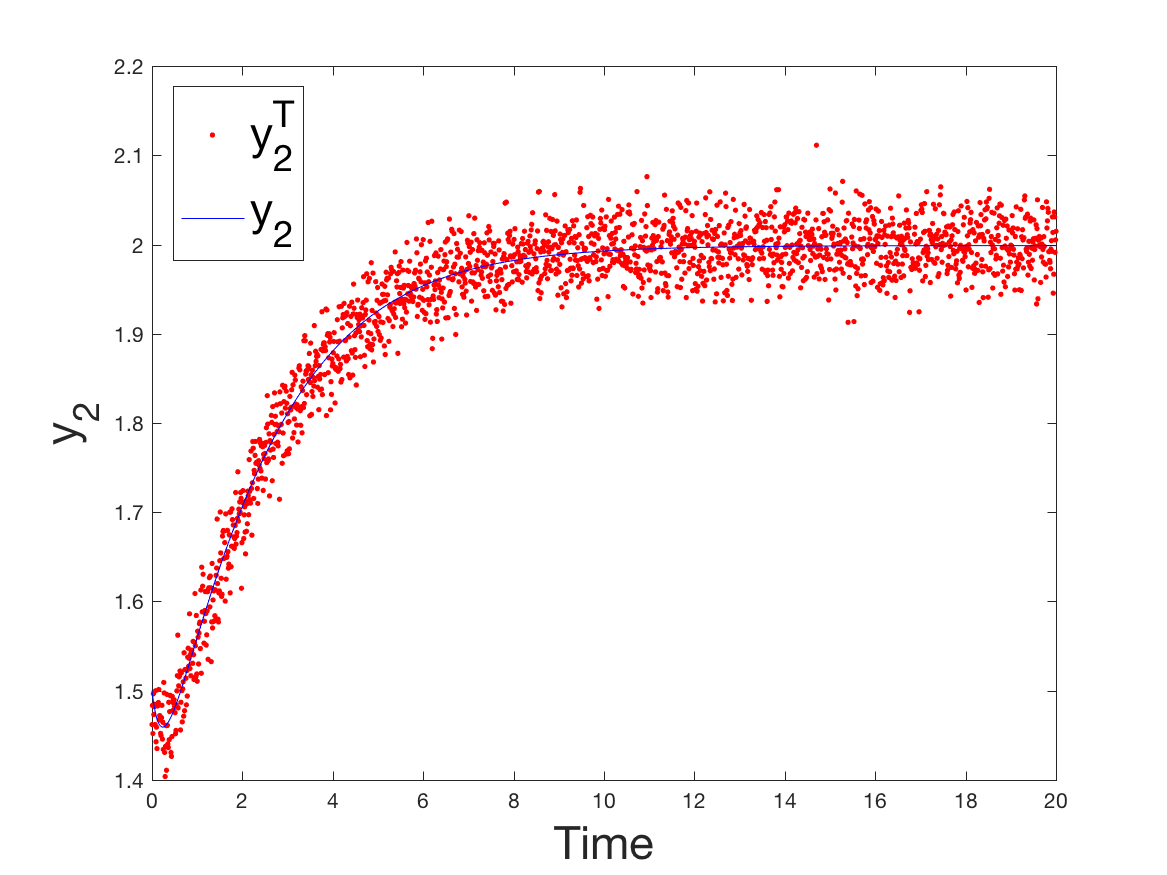}}&\rotatebox{-90}{5$\%$Noise}\\
\raisebox{-.6\height}{\includegraphics[height=5cm,width=6cm]{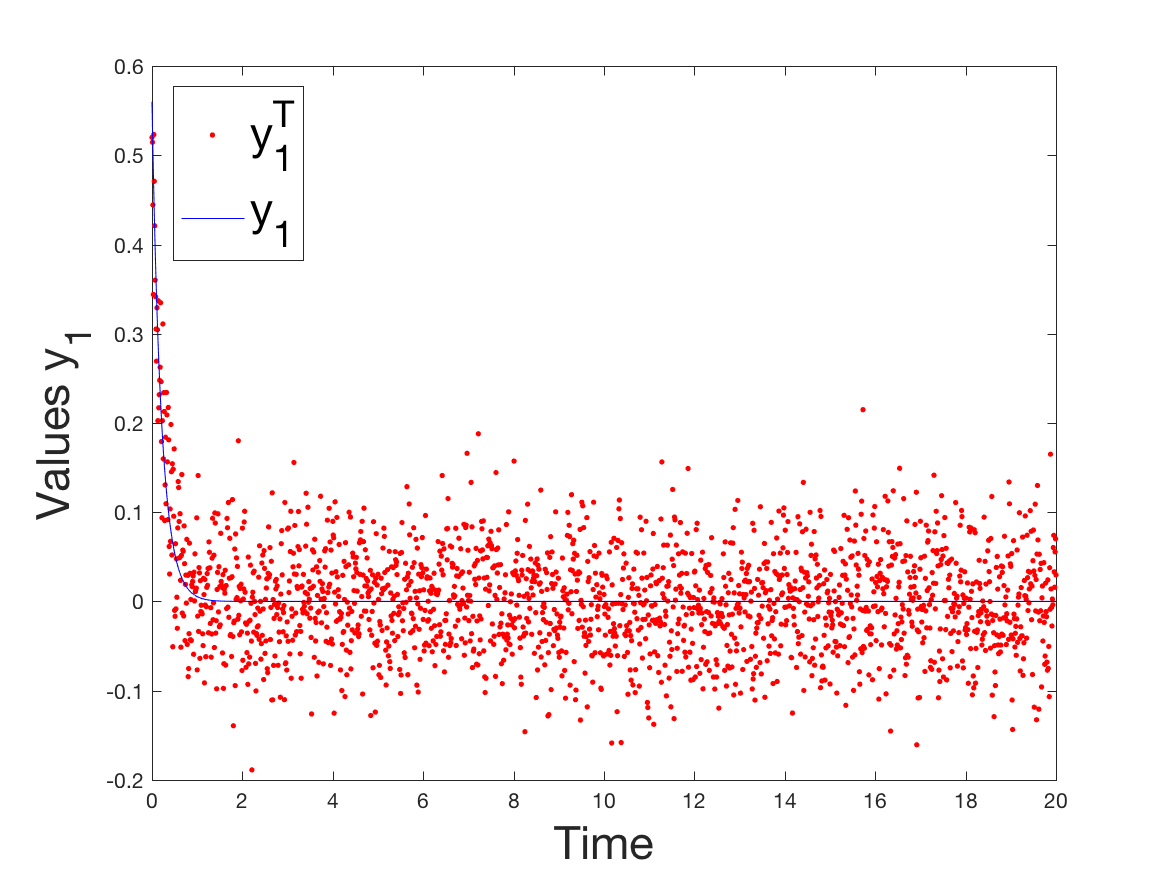}}&
\raisebox{-.6\height}{\includegraphics[height=5cm,width=6cm]{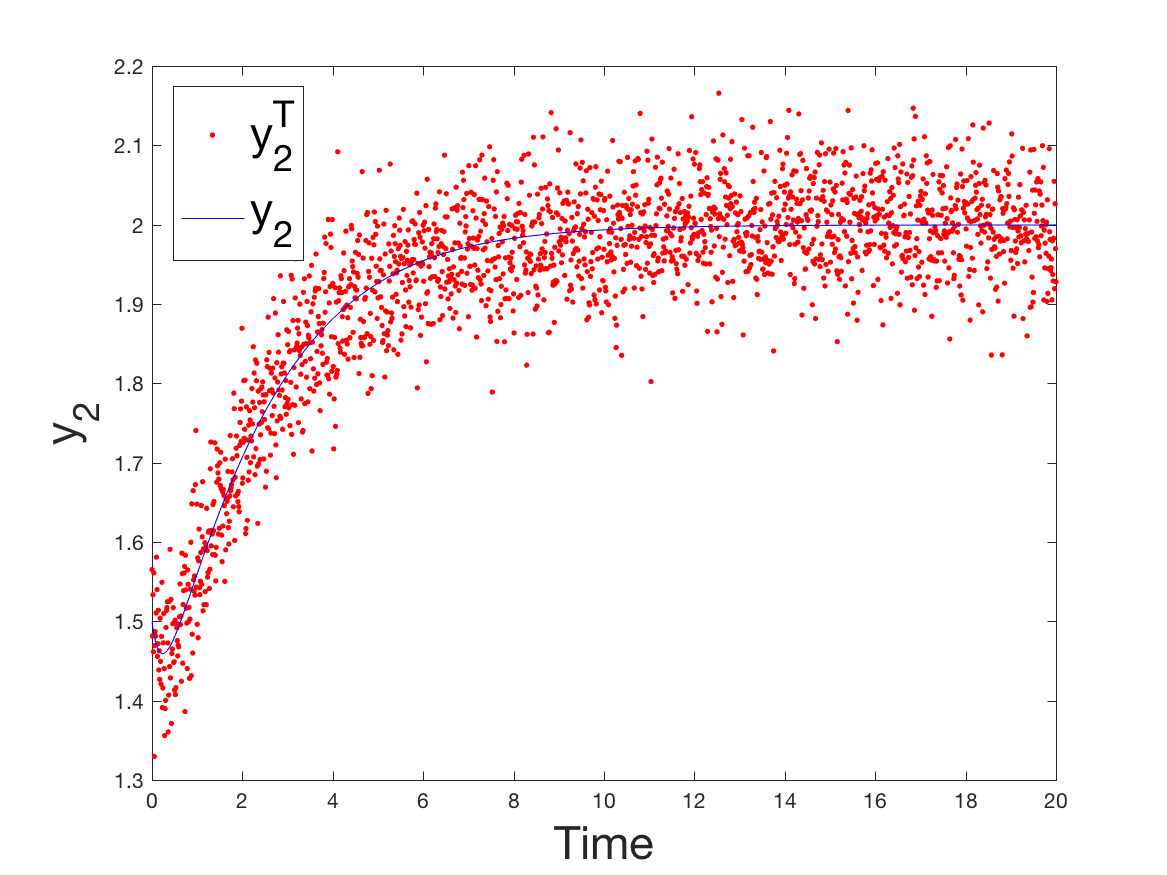}}&\rotatebox{-90}{10$\%$Noise}\\
\end{tabular}
\caption{Numerical experiments of the Species Compete Model fit problem \eqref{CSM} Model fit with a noisy data (0\%,1\%,5\% and 10\% from top to bottom). Profile of the solution $\y_1$ (left) and $\y_2$ (right) with their respective targeted profile $\y_1^T$ and $\y_2^T$ respectively.}\label{CSM-plot}
\end{figure}

\begin{table}[!h]
\begin{center}
\begin{tabular}{c|c|c|c|c|c|c}
\hline \text{Coef Variables} & $\zeta_{1}$&$ \eta_{1}$&$\theta_{1}$&$ \zeta_{2}$&$\eta_{2}$ & $\theta_{2}$\\ \hline
          \text{Actual Val.} &  4.0e-01 &      1.0e+00&      3.3e+00&      5.0e-01&      2.5e-01  &   7.5e-01\\
\text{Noise-free Val.} 
&     4.008846e-01&     1.000964e+00&     3.300283e+00&     5.014341e-01&     2.507478e-01&     7.507743e-01\\
1\% \text{Noise} 
&   4.806350e-01
&     1.1583938e+00
&    3.3306154e+00
&     6.710307e-01
&     3.391298e-01
&     7.997818e-01\\
5\% \text{Noise}  &4.009712e-01  &1.000412e+00  &3.300645e+00  &5.013874e-01  &2.508355e-01 &7.507349e-01\\
10\% \text{Noise}  
 &    4.006845e-01
 &    1.000858e+00
  &   3.300348e+00
  &   5.012614e-01
  &   2.506655e-01
  &   7.502489e-01
  \\\hline
\end{tabular}\caption{Convergence of the parameter estimation algorithm toward the solution of Species Compte Nonlinear Model}\label{CSM-data}
\end{center}\end{table}

\section{Summary and conclusion}\label{Conc}
We considered in this paper a non-linear parameter estimation problem for a class of linear dynamical model. We proved, under necessary conditions on the smoothness of the handled problem, the linear convergence of the formulated optimal control problem using a state-constrained nonlinear least-square minimization via the classical steepest descent method.  Besides, we proved linear convergence rate for a class of nonlinear conjugate gradient method. Our analysis differs from the literature where previous attempts employ contradiction evidence to prove global convergence. We think that our proof's steps will help in a further understanding of the convergence properties of the nonlinear conjugate gradient.

    Numerical evidence has been reported to show the effectiveness of the convergence. We don't claim that our numerical experiments prove convergence to the absolute minimum, rather than showing comfortable convergence toward stationary point with the help of a sampling procedure. It is noticed here that the later takes considerable wall-time to generate good initial guess depending on the chosen sampling criteria.
 In future work, we shall investigate global optimizer search technique combined with the NCG in order to subjugate the local convergence limitations.  

\bibliography{bibParam}
\bibliographystyle{plain}

\end{document}